\numberwithin{equation}{section}
\newtheorem{theorem}{Theorem}[section]
\newtheorem{lemma}[theorem]{Lemma}
\newtheorem{lem}[theorem]{Lemma}
\newtheorem{proposition}[theorem]{Proposition}
\newtheorem{corollary}[theorem]{Corollary}
\newtheorem{remark}[theorem]{Remark}
\newtheorem{definition}[theorem]{Definition}
\newtheorem{rem}[theorem]{Remark}
\newcommand{\T}{{\mathbb T}}
\newcommand{\N}{{\mathbb N}}
\newcommand{\Z}{{\mathbb Z}}
\newcommand{\R}{{\mathbb R}}
\newcommand{\pa}{{\partial}}
\newcommand{\na}{{\nabla}}
\newcommand{\Hc}{\mathcal{H}}
\renewcommand{\a}{\alpha}
\renewcommand{\b}{\beta}
\renewcommand{\r}{\tilde{r}}
\newcommand{\F}{\mathcal{F}}
\newcommand{\Tc}{\mathcal{T}}
\begin{document}

\title[Propagation of higher space regularity for Vlasov equations]{On propagation of higher space regularity for non-linear Vlasov equations}

\author{Daniel Han-Kwan}
  \address{CMLS \\ \'Ecole polytechnique, CNRS, Universit\'e Paris-Saclay, 91128 Palaiseau Cedex, France}
  \email{daniel.han-kwan@polytechnique.edu}

\date{\today}

\maketitle

\renewcommand{\thefootnote}{\fnsymbol{footnote}}

\begin{abstract}
This work is concerned with the broad question of propagation of regularity for smooth solutions to non-linear Vlasov equations.
For a class of equations (that includes Vlasov-Poisson and relativistic Vlasov-Maxwell), we prove that higher regularity in space is propagated, locally in time, into higher regularity for the moments in velocity of the solution. This in turn can be translated into some anisotropic Sobolev higher regularity for the solution itself, which can be interpreted as a kind of weak propagation of space regularity.
To this end, we adapt the methods introduced in the context of the quasineutral limit of the Vlasov-Poisson system in [D. Han-Kwan and F. Rousset,  \emph{Ann. Sci. \'Ecole Norm. Sup.}, 2016].

\end{abstract}

\tableofcontents

\section{Introduction}
This paper is concerned with the broad question of propagation of regularity for smooth solutions to Vlasov equations of the general form
\begin{equation}
\label{Vlasov}
\pa_t f + a(v)\cdot \na_x f + F(t,x,v) \cdot \na_v f =0,
\end{equation}
set  in the phase space $\T^d \times \R^d$ (with $\T^d= \R^d/\Z^d$ endowed with normalized Lebesgue measure), where $F: \R^+ \times \T^d \times \R^d \to \R^d$ is a force field satisfying $\na_v \cdot F=0$ and $a: \R^d \to \R^d$ is an advection field satisfying suitable assumptions, $a(v) =v$ being the main example to be considered. 
The (scalar) function $f(t,x,v)$ may be understood as the distribution function of a family of particles, which can be, depending on the physical context, e.g. electrons, ions in plasma physics, or stars in galactic dynamics. The choice of the periodic torus $\T^d$ is made for simplicity.

The two precise examples of equations we specifically have in mind are the Vlasov equations arising from a coupling with Poisson or Maxwell equations, in which case the  resulting coupled system is called
the Vlasov-Poisson or the relativistic Vlasov-Maxwell system (we will discuss as well several other models).

\noindent$\bullet$ The \emph{Vlasov-Poisson} system -- either the repulsive or the attractive version, the sign of the interaction here does not matter here -- reads
\begin{equation}\label{VP} 
\left \{ \begin{aligned}
&\partial_t f + {v} \cdot \nabla_x f \pm E\cdot \nabla_v f =0,\\
&E(t,x)= -\na_x \phi(t,x), \\
&-\Delta_x \phi = \int_{\R^d} f \, dv -   \int_{\T^d \times \R^d} f \, dv dx, \\
&f|_{t=0} = f_0.
\end{aligned}
\right.
\end{equation}

In the repulsive version (that is with the sign $+$ in the Vlasov equation), this system describes the dynamics of charged particles in a non-relativistic plasma, with a self-induced electric field.

In the attractive version (that is with the sign $-$ in the Vlasov equation), it describes the dynamics of stars or planets with gravitational interaction.

\noindent $\bullet$ The \emph{relativistic Vlasov-Maxwell} system, reads, in dimension $d=3$,
\begin{equation}\label{VM} 
\left \{ \begin{aligned}
&\partial_t f + \hat{v} \cdot \nabla_x f + F\cdot \nabla_v f =0,\\
&\hat{v} := \frac{v}{\sqrt{1+ |v|^2/c^2}}, \qquad F(t,x,v):= E(t,x) + \frac{1}{c} \hat{v} \times B(t,x), \\
&\frac{1}{c} \partial_t B + \nabla_x \times E = 0, \qquad \nabla_x \cdot E  = \int_{\R^3} f \, dv  - \int_{\T^3 \times \R^3} f \, dv  dx,
\\
&- \frac{1}{c} \partial_t E + \nabla_x \times B = \frac{1}{c} \int_{\R^3} \hat{v} f   \, dv  , \qquad \nabla_x \cdot B  =0, \\
&f|_{t=0} = f_0, \qquad (E,B)|_{t=0}=(E_0,B_0),
\end{aligned}
\right.
\end{equation}
in which the parameter $c$ is the speed of light. There are also  related versions of~\eqref{VM} in lower dimensions. This system  describes the dynamics of charged particles in a relativistic plasma, with a self-induced electro-magnetic field. We recall that the (repulsive) Vlasov-Poisson can be derived from~\eqref{VM} in the non-relativistic regime, that is to say in the limit $c \to + \infty$, as studied in \cite{AU,Deg,SCH}.

In this paper, we will consider weighted Sobolev norms and associated weighted Sobolev spaces (based on $L^2$), defined, for  $k \in \N, r \in \R$, as
\begin{equation}
\| f \|_{\Hc^{k}_{r}} := \left(\sum_{|\a| + |\b| \leq k} \int_{\T^d} \int_{\R^d} (1+ |v|^2)^{r} |\pa^\a_x \pa^\b_v f|^2 \, dv dx \right)^{1/2},
\end{equation}
where  for $\a = (\a_1, \cdots, \a_n), \b = (\b_1, \cdots, \b_n) \in \{1,\cdots, d\}^n$, we write
$$
|\a|= n, \quad |\b|=n,
$$
and
$$
\pa^\a_x := \pa_{x_{\a_1}} \cdots \pa_{x_{\a_n}},\quad \pa^\b_v := \pa_{v_{\b_1}} \cdots \pa_{v_{\b_n}}.
$$
As usual the notation $H^s$ will stand for the standard Sobolev spaces, without weight.

It will be also useful to introduce the following weighted $W^{k,\infty}$ space, whose norm is defined,
for  $k \in \N, r \in \R$, as
\begin{equation}
\| f \|_{\mathcal{W}^{k,\infty}_{r}} := \sum_{|\a| + |\b| \leq k} \|(1+|v|^2)^{r/2} \pa^\a_x \pa^\b_v f\|_{L^{\infty}_{x,v}} \end{equation}
For the Vlasov-Poisson or Vlasov-Maxwell couplings, given an initial condition $f_0$ satisfying
$$
f_0 \in \Hc^n_r
$$
for $n, r>0$ large enough (and with a smooth enough  initial force $F(0)$),
it is standard that there exists a unique local solution
$f(t) \in {C}(0,T; \Hc^n_r)$. 
Under fairly general assumptions on the advection field $a$ and the force $F$, the same result can also be shown for~\eqref{Vlasov}, as we will soon see.

Let us now present the precise problem we tackle in this work.
Assuming  some higher \emph{space} regularity such as
\begin{equation}
\label{extra}
\pa_x^{n+1} f_0 \in \Hc^0_r, \qquad \text{(or } \pa_x^{p} f_0 \in \Hc^0_r, \text{  for } p \geq n+1 \text{)}
\end{equation}
the question we ask is the following: is there also propagation of any higher regularity for the solution $f(t)$?
A first remark to be made is that there is no hope of proving that this sole additional assumption entails that the solution $f(t)$ also satisfies $\pa_x^{n+1} f(t) \in \Hc^0_r$ even for small values of $t$. Indeed,  regularity in $x$ and $v$ is intricately intertwined for solutions of the Vlasov equation, as can be seen from the representation of the solution using the method of characteristics.

For $s,t \geq 0$ and  $(x,v) \in \T^d \times \R^d$, we define as usual the characteristic curves $(X(s,t,x,v),V(s,t,x,v))$  as the solutions to the system of ODEs
\begin{equation}
\label{charac}
\left \{ 
\begin{aligned}
&\frac{d}{ds} X(s,t,x,v) = a(V(s,t,x,v)), \qquad &X(t,t,x,v) =x,\\
&\frac{d}{ds} V(s,t,x,v) = F(s,X(s,t,x,v) ,V(s,t,x,v)), \qquad &V(t,t,x,v) =v.
\end{aligned}
\right.
\end{equation}
The existence and uniqueness of such curves are a consequence of the Cauchy-Lipschitz theorem (assuming we deal with smooth forces).
The method of characteristics asserts that one can represent the solution of~\eqref{Vlasov} as
\begin{equation}
f(t,x,v) = f_0 (X(0,t,x,v),V(0,t,x,v)).
\end{equation}
Therefore we see (except maybe in trivial cases such as $F\equiv 0$) that derivatives in $x$ of $f(t)$ involve derivatives in $x$ and in $v$ of $f_0$, so that regularity in $x$ only of $f_0$ can not in general be propagated for $f(t)$.
However given some smooth test function $\psi(v)$ (the case $\psi =1$ is already interesting), we can also wonder about the higher regularity of the moment $m_\psi(t,x) := \int_{\R^d} f(t,x,v) \psi(v) \,dv$. Such moments, which can be interpreted as hydrodynamic quantities, are important objects in kinetic theory.
We have the representation formula
$$
m_\psi(t,x) = \int_{\R^d}  f_0 (X(0,t,x,v),V(0,t,x,v)) \psi(v) \,dv.
$$
We note that for $t$ small enough, the map $v \mapsto V(s,t,x,v)$ is a diffeomorphism for all $s \in [0,t]$. Indeed for $s=t$ this map is the identity and integrating with respect to $s$ the equation satisfied by $V(s,t,x,v)$, we note that for $t$ small enough and $s \in [0,t]$, the map $v \mapsto V(s,t,x,v)$ is a small perturbation of the identity, hence our claim that it is a diffeomorphism. In particular the map $v \mapsto V(0,t,x,v)$ is a diffeomorphism  and we denote by $V^{-1}(t,x,v)$ its inverse. Using this diffeomorphism as a change of variables (in $v$) we get, for $t$ small enough,
\begin{multline*}
m_\psi(t,x) =\\
 \int_{\R^d}  f_0 (X(0,t,x,V^{-1}(t,x,v)), v) \psi(V^{-1}(t,x,v)) |\det D_v V(0,t,x,v)|^{-1} \,dv.
\end{multline*}
Thanks to this formula, at least formally,  Leibniz rule allows to ensure that  derivatives in $x$ of the moment $m_\psi$ only involve  derivatives in $x$ of $f_0$. Recalling the extra higher regularity~\eqref{extra}, it seems maybe natural to expect that the moment $m_\psi$ belongs to the Sobolev space $H^{n+1}$ in $x$. In the case where $F$ is a \emph{fixed} external force, assumed to be very smooth, say $C^\infty$ with respect to all variables, then the fact that, $t$ being fixed,
$m_\psi(t,\cdot)$ belongs to $H^{n+1}_x$ follows indeed from Leibniz formula, using the fact the characteristic curves $(X,V)$ inherit the $C^\infty$ regularity of $F$.

However this argument seems to break down in the case where the case $F$ depends on the solution $f(t)$ itself, as the regularity of $F$ is then tightly linked to that of $f$.
Let us discuss for instance the Poisson case -- the Maxwell case is actually worse in the sense that in the Vlasov-Poisson coupling, $F$ gains, loosely speaking, one derivative in $x$ compared to $f$.
As already mentioned, the local Cauchy theory yields $f(t) \in {C}(0,T; \Hc^n_r)$, and we have $F \in {C}(0,T; H^{n+1}_x)$. 
Note then that when applying $(n+1)$ derivatives in $x$ on $m_\psi$, one needs to apply $(n+1)$ derivatives in $x$ on  $|\det D_v V(0,t,x,v)|^{-1}$, which amounts to applying in total $(n+2)$ derivatives to $V(0,t,x,v)$. However, by~\eqref{charac}, we observe that $(X,V)$ inherits the same order of regularity as $F$, and therefore it does not seem licit to take as many as $(n+2)$ derivatives. 

The goal of this work is to show that despite this apparent shortcoming, it is indeed possible to show for a fairly wide class of non-linear Vlasov equations (including the Vlasov-Poisson and Vlasov-Maxwell system) a result of propagation of  regularity in $x$  for the moments, assuming higher order space regularity for the initial condition.  This in turn can be translated into some anisotropic Sobolev higher regularity  for the solution itself, which can be interpreted as a kind of weak propagation of space regularity.

It turns out that the \emph{lagrangian approach}, that is to say the approach that we have just underlined, based on  representation formulas using characteristics , is not adapted to answer this question. 
Instead we shall rely on an \emph{eulerian approach}, that is  based to a larger extent on the PDE itself, inspired by the recent work of the author in collaboration with F. Rousset on the quasineutral limit of the Vlasov-Poisson system \cite{HKR,HKR2}.
The quasineutral limit is a singular limit which loosely consists in a penalization of the Laplacian in the Poisson equation. The small parameter is the scaled Debye length, which appears to be very small in several usual plasma settings.
The limit leads to \emph{singular} Vlasov equations, which display a loss of regularity of the force field compared to that of  the distribution function. As a consequence, these equations are in general ill-posed in the sense of Hadamard, see \cite{BarNou} and \cite{HKN2}. This problem might therefore look quite different from the one considered here; the similarity comes from the fact that the justification of the quasineutral limit ultimately loosely comes down to the proof of an \emph{uniform}\footnote{(with respect to the scaled Debye length)} propagation of one order of higher regularity for  moments of solutions of the Vlasov-Poisson equation. Note though that the analysis of \cite{HKR,HKR2} requires the introduction of pointwise Penrose stability conditions, and also relies on pseudo-differential tools, which will not be the case in this paper.
As a matter of fact, the singular Vlasov equations which can be formally derived in the quasineutral limit will not enter the class of Vlasov equations we will deal with in this work, precisely because of the aforementioned loss of derivative.

The methodology of  \cite{HKR} was also used in the context of large time estimates for data close to stable equilibria for the Vlasov-Maxwell system in the non-relativistic regime, in a  recent work in collaboration with T. Nguyen and F. Rousset \cite{HKNR}.

 As a matter of fact, the approach can be considered as \emph{semi-lagrangian}, in the sense that at some point we still rely on characteristics as in the lagrangian approach but at the level at the PDEs that arise after applying derivatives on the Vlasov equation, whereas in the lagrangian approach, derivatives are taken after using the representation of the solution by characteristics.

\section{Main results}

\subsection{The abstract framework} Let us now describe precisely the class of Vlasov equations we deal with. We consider in this work the abstract equation
\begin{equation}
\label{abstract}
\pa_t f + a(v) \cdot \na_x f + F \cdot \na_v f =0,
\end{equation}
with the following structural assumptions. Among all these assumptions, we highlight that the force depends on the distribution function itself, but only through some of its moments in velocity.

\noindent $\bullet$ \emph{\large Assumptions on the advection field.}  The map $a: \R^d \to \R^d$ is a  one to one $C^\infty$ function
such that 
\begin{equation}
\label{a}
|a(v) | \leq C(1+ |v|), \quad \forall v \in \R^d, 
\end{equation} 
\begin{equation}
\label{deriv-a}
\|\pa^\alpha_v a \|_{L^\infty} \leq C_\a, \quad \forall |\alpha| \neq 0, 
\end{equation} and its inverse $a^{-1}$ (defined on $a(\R^d)$) satisfies, for some $\lambda>0$,
\begin{equation}
\label{growth-a-1}
|\pa^\alpha_v a^{-1}(w) | \leq C_\alpha (1+ |a^{-1} (w)| )^{1+ \lambda |\alpha|}, \qquad \forall w \in a(\R^d), \, \forall \alpha
.
\end{equation}

\bigskip

\noindent $\bullet$ \emph{\large Assumptions on the force field.} The vector field $F$ is divergence-free in $v$ (i.e. satisfies $\na_v \cdot F =0$) and 
we have the following decomposition for some $\ell \in \N^*$:
\begin{equation}
F(t,x,v)= \sum_{j=1}^\ell A_j (v) F^j(t,x).
\end{equation}
We assume that for all $j \in \{1,\cdots, \ell\}$, $A_j$ is a $C^\infty$ scalar function satisfying
\begin{equation}
\label{deriv-A}
\| \pa^\alpha_v A_j \|_{L^\infty} \leq C_\alpha, \quad \forall \alpha.
\end{equation}
Furthermore, there exist $C^\infty$ functions $\psi_1(v), \cdots, \psi_r(v)$ with at most polynomial growth,
i.e. there is $r_0>0$ such that
\begin{equation}
\label{eq-growthpsi}
\|\psi_i(v)\| _{\mathcal{W}^{k,\infty}_{-r_0}}\leq C_{i,k}, \quad\forall k \in \N,
\end{equation}
such that, 
denoting
$$
m_{\psi_i} (t,x)= \int_{\R^d} f (t,x,v)\psi_i(v) \,dv,
$$
 for all $j=1, \cdots, \ell$, 
 the vector field $F^j$ is \emph{uniquely determined by these moments and the initial conditions}, through a map
 \begin{equation}
\Big((m_{\psi_i} )_{i=1,\cdots,r},  (F^j(0))_{j=1,\cdots,\ell}\Big) \mapsto F^j
\end{equation}
 and 
  for all large enough $n > 1+ d $, and all $t >0$, we have
\begin{equation}
\label{Fj}
\begin{aligned}
&\| F^j \|_{L^2(0,t; H^n_x)} \\
& \leq \Gamma^{(j)}_{n} \left(t,  \| m_{\psi_1} \|_{L^2(0,t;H^n_x)}, \cdots, \| m_{\psi_r} \|_{L^2(0,t;H^n_x)}, \sum_{j=1}^{\ell} \|F^{j} (0) \|_{H^n_x} \right),
 \end{aligned}
\end{equation}
and
\begin{equation}
\label{Fj-inft}
\begin{aligned}
&\| F^j \|_{L^\infty(0,t; H^n_x)} \\
& \leq \Gamma^{(j)'}_{n} \left(t,  \| m_{\psi_1} \|_{L^\infty(0,t;H^n_x)}, \cdots, \| m_{\psi_r} \|_{L^\infty(0,t;H^n_x)}, \sum_{j=1}^{\ell} \|F^{j} (0) \|_{H^n_x} \right),
 \end{aligned}
\end{equation}
where $\Gamma^{(j)}_{n},\Gamma^{(j)'}_{n}$ is a polynomial function that is non-increasing with respect to each of its arguments (the others being fixed non-negative numbers). 

Finally, the force field satisfies the following stability property. Let $f$ and $g$ be two solutions of~\eqref{abstract}, and denote by $F[f]$ and $F[g]$ their associated force field. Assume that the initial conditions $ (F^j(0))_{j=1,\cdots,\ell}$ are the same. Then, we have for all $j = 1, \cdots, \ell$,
\begin{equation}
\label{stab}
\begin{aligned}
&\| F^j[f] - F^j[g]  \|_{L^2(0,t; H^n_x)} \\ 
&\leq \Gamma^{(j)\sharp}_{n} \left(t, \left\|\int (f - g)\psi_i(v) \,dv \right\|_{L^2(0,t;H^n_x)}, \cdots,  \left\|\int (f - g)\psi_r(v) \,dv \right\|_{L^2(0,t;H^n_x)}\right),
 \end{aligned}
\end{equation}
where $\Gamma^{(j)\sharp}_{n} $ is a polynomial function that is non-increasing with respect to each of its arguments and such that $\Gamma^{(j)'}_{n} (0,\cdot)=0$.

We shall explain later why both Vlasov-Poisson and relativistic Vlasov-Maxwell enter the abstract framework.


\subsection{Statement of the main results}

The regularity and integrability indices that will be useful to handle such equations will depend on the dimension $d$, the maximal growth of the moments that intervene in the definition of $F$, that is $r_0$, and the parameter of growth of the inverse of $a$, that is $\lambda$; let us set
\begin{equation}
\label{def-NR}
N := \frac{3}{2} d +4, \qquad R :=  \max\left( \frac{d}{2} + 2 (1+\lambda)(1+d) +  r_0\right).
\end{equation}
We  use in the following statement the notation $\lfloor \cdot \rfloor$ for the floor function.

The main result proved in this paper is the following theorem.
\begin{theorem}
\label{thm}
Let $n\geq N$ and $r>R$.
Let $n'>n$ be an integer such that
$n> \lfloor \frac{n'}{2} \rfloor+1$. Assume that $f_0 \in \Hc^{n}_{r}$ and $F^j(0) \in H^{n'}_x$ for all $j \in \{1,\cdots, \ell\}$.
Assume furthermore that the initial data $f_0$ satisfy the following higher anisotropic regularity:
\begin{equation}
\label{eq-thm}
\begin{aligned}
\pa_x^{2(n-\lfloor \frac{n'}{2} \rfloor+k)} \pa^\alpha_x \pa^\beta_v f_0 \in \Hc^0_r, \quad\forall |\alpha| + |\beta| =n'-n-k, \quad \forall k \in \left\{1,\cdots, 2\left\lfloor \frac{n'}{2} \right\rfloor-n \right\}.
\end{aligned}
\end{equation}
Then there is $T>0$ such that the following holds. There exists a unique solution $(f(t),F(t))$ with initial data $(f_0, F(0))$ to~\eqref{abstract} such that $f(t) \in C(0,T; \Hc^{n}_r)$.

Moreover, for all test functions $\psi \in L^\infty(0,T; \mathcal{W}^{n',\infty}_{-r_0})$, we have
\begin{equation}
\label{conc-thm}
\left \|\int f \psi \,dv  \right\|_{L^2(0,T; H^{n'}_x)} \leq \Lambda_\psi (T,M),
\end{equation}
where $\Lambda_\psi$ is a polynomial function and 
$$
M  = \| f_0 \|_{\Hc^n_r} +\sum_{j=1}^{\ell} \|F^j(0)\| _{H^{n'}_x} + \sum_{k=1}^{2\left\lfloor \frac{n'}{2} \right\rfloor-n } \sum_{|\alpha| + |\beta| =n'-n-k} \|\pa_x^{2(n-\lfloor \frac{n'}{2} \rfloor+k)} \pa^\alpha_x \pa^\beta_v f_0 \|_{\Hc^0_r}.
$$
\end{theorem}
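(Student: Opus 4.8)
The plan is to run an energy estimate in an \emph{eulerian} (in fact semi-lagrangian) spirit, not on $f$ itself but on a tower of auxiliary unknowns obtained by applying carefully chosen mixed space-velocity derivatives to the Vlasov equation. The key observation is that although $\pa_x^{n+1} f$ cannot be controlled, each application of $\pa_x$ to the equation produces a source term $(\pa_x F)\cdot\na_v f$ in which one \emph{must} pay a $v$-derivative; conversely, applying $\pa_v$ produces $(\pa_v a)\cdot\na_x f$, which turns a $v$-derivative into an $x$-derivative at no cost in force regularity. So the natural hierarchy is indexed by $k$: at "level $k$" one looks at quantities of the form $\pa_x^{2(n-\lfloor n'/2\rfloor+k)}\pa_x^\alpha\pa_v^\beta f$ with $|\alpha|+|\beta| = n'-n-k$, exactly as in hypothesis~\eqref{eq-thm}. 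The trade-off is encoded in the factor $2$ in front of the pure $x$-derivative count: to gain one more velocity-"budget" order you spend two pure $x$-derivatives. Commuting these operators through $a(v)\cdot\na_x + F\cdot\na_v$ and using assumptions \eqref{deriv-a}, \eqref{growth-a-1}, \eqref{deriv-A} to control the commutators, one gets, for each level, a transport equation with a right-hand side that is lower in the hierarchy (plus terms involving $\pa_x^{\le n'} F$ paired against $f$-derivatives already controlled at the previous level). Here the condition $n > \lfloor n'/2\rfloor + 1$ guarantees that the "base" of the tower ($k$ near $2\lfloor n'/2\rfloor - n$, where the pure $x$-count drops to $\sim 2n - n'$ and the mixed budget is small) stays within the regularity $\Hc^n_r$ provided by the local Cauchy theory, and the weight exponent $R$ from \eqref{def-NR} is chosen (via $\lambda$ and $r_0$) exactly so that the polynomial $v$-growth produced by differentiating $a^{-1}$ and by the moment weights $\psi_i$ is absorbed by $(1+|v|^2)^r$.

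First I would set up the local well-posedness part: existence and uniqueness of $f\in C(0,T;\Hc^n_r)$ for $n\ge N$ follows by a standard fixed-point/energy argument using \eqref{Fj-inft} and the stability estimate \eqref{stab} (this is what $N$ and $R$ are sized for — $N = \tfrac32 d + 4$ leaves room for the Sobolev embeddings $H^{n}\hookrightarrow W^{1,\infty}$ and for the loss in the Maxwell case). Next I would prove, by downward induction on the level $k$ (starting from the lowest pure-$x$-count and climbing), an a priori bound of the form $\|\,\pa_x^{2(n-\lfloor n'/2\rfloor+k)}\pa_x^\alpha\pa_v^\beta f\,\|_{L^\infty(0,T;\Hc^0_r)} \le \Lambda(T,M)$. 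Each inductive step is a weighted $L^2$ energy estimate on the transport equation for that quantity: the transport term is skew-adjoint modulo $\na_v\cdot F = 0$ and $|\pa^\alpha a|$ bounded, so it contributes a harmless Grönwall factor involving $\|\na_x F\|_{L^\infty}$, i.e. $\|F\|_{H^{n'/2+\text{something}}}$ which is $\lesssim \|F\|_{H^n}$; the source terms are estimated by Leibniz + Sobolev product laws against the already-controlled lower levels and against $\|F\|_{L^2(0,T;H^{n'}_x)}$. That last norm is bounded in terms of $\|m_{\psi_i}\|_{L^2(0,T;H^{n'}_x)}$ via \eqref{Fj} with $n$ replaced by $n'$; but $m_{\psi_i}$ with $n'$ derivatives in $x$ is a moment of $\pa_x^{n'} f$-type quantities — and \emph{that} is precisely the top of our hierarchy (the $k = 2\lfloor n'/2\rfloor - n$ line spends its derivatives so that, after closing, one recovers $n'$ total $x$-derivatives on a moment). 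So one closes the loop: the energy estimates bound $\|m_{\psi_i}\|_{L^2(0,T;H^{n'}_x)}$ in terms of the hierarchy, \eqref{Fj} feeds that back into $\|F\|_{L^2(0,T;H^{n'}_x)}$, and a continuity/bootstrap argument on $T$ small enough makes everything finite. Finally \eqref{conc-thm} for a \emph{general} test function $\psi\in\mathcal{W}^{n',\infty}_{-r_0}$ follows by writing $\int f\psi\,dv$ and distributing the $n'$ $x$-derivatives onto $f$ only, then pairing the resulting $\pa_x$-derivatives of $f$ against $\psi$ in $\Hc^0_r \times L^2_{-r}$ after converting mixed $x,v$-derivatives using integration by parts in $v$ (this is where the anisotropic structure \eqref{eq-thm} is tailored to what the energy method actually controls).

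The main obstacle — and the crux of the whole paper — is the closure of the hierarchy: making sure the source terms generated at level $k$ genuinely involve only quantities already bounded at levels $k' < k$ (or at the Cauchy level $n$), with no "diagonal" term forcing $\pa_x^{n'+1} f$ or $\pa_x^{n'+1} F$ to appear. Concretely, when one differentiates the force term $F\cdot\na_v f = \sum_j A_j(v) F^j(t,x)\cdot\na_v f$ and applies $\pa_x^{2(n-\lfloor n'/2\rfloor+k)}\pa_x^\alpha\pa_v^\beta$, the worst term puts all the $x$-derivatives on $F^j$, giving $(\pa_x^{\,2(n-\lfloor n'/2\rfloor+k)+|\alpha|} F^j)\,A_j(v)\,\pa_v^{\beta+1} f$; one needs $2(n-\lfloor n'/2\rfloor+k)+|\alpha| \le n'$ (so this is controlled by $\|F\|_{L^2(0,T;H^{n'}_x)}$) together with $\pa_v^{\beta+1} f$ controlled in $L^\infty_{t,x} L^2_v$ by the basic $\Hc^n_r$ theory — and checking that these two constraints are simultaneously satisfiable for every admissible $(k,\alpha,\beta)$ is exactly where the arithmetic conditions $n\ge N$, $n > \lfloor n'/2\rfloor+1$, and the precise index $2(n-\lfloor n'/2\rfloor+k)$ in \eqref{eq-thm} are forced. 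The rest — weighted Sobolev product estimates, the polynomial bookkeeping of the $\Gamma$'s, and the final $T$-bootstrap — is routine once this combinatorial/index-matching backbone is in place.
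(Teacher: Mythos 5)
There is a genuine gap at the core of your plan: the proposed energy hierarchy does \emph{not} close under the commutators with the transport operator, and this is precisely the obstruction the paper is designed to circumvent. Your hierarchy is, up to relabeling, the set of operators $\pa_x^{a}\pa_v^{b}$ with $n+1 \le a+b \le n'$ and $a + 2b \le n'$. Now commute such an operator with $F\cdot\na_v = \sum_j A_j(v) F^j(x)\cdot\na_v$. Among the Leibniz terms is
\begin{equation*}
\pa_x F^j \, A_j(v)\cdot\na_v\,\pa_x^{a-1}\pa_v^{b} f \,=\, \pa_x F^j \, A_j(v)\cdot \pa_x^{a-1}\pa_v^{b+1} f,
\end{equation*}
i.e.\ a term with $(a-1, b+1)$ derivatives of $f$. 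It has the same total order $a+b > n$, so it is not controlled by the basic Cauchy theory in $\Hc^n_r$; and it satisfies $(a-1) + 2(b+1) = a + 2b + 1$, which exceeds $n'$ whenever $(a,b)$ sits on the boundary $a+2b = n'$ of your hierarchy. So the right-hand side of your energy inequality contains quantities strictly outside the family you are estimating, and no amount of integration by parts in $x$ or $v$ removes the offending $\pa_x^{a}\pa_v^{b+1} f$. Already in the simplest case $n' = n+1$ (so the hierarchy reduces to $\{\pa_x^{n'} f\}$), the commutator produces $\pa_x F\cdot\na_v\pa_x^{n'-1} f$, which is uncontrolled. This is exactly the point the paper makes in the overview: ``applying the operator $\pa_x^\alpha$ \dots\ is not possible to obtain a closed equation \dots\ without appealing to $\pa_x^\beta\pa_v^\gamma f$ for $\gamma\neq 0$''.

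What the paper does instead is \emph{not} to differentiate by constant-coefficient operators $\pa_x^{a}\pa_v^{b}$, but to build second-order operators $L_{i,j} = \pa^2_{x_i x_j} + \sum_{k,l}(\varphi_{k,l}^{i,j}\pa_{x_k}\pa_{v_l} + \psi_{k,l}^{i,j}\pa^2_{v_k v_l})$ whose coefficients $\varphi,\psi$ depend on $F$ and are chosen (via the semilinear system \eqref{eq-constraint}) so that the commutator $[L_{i,j},\Tc]$ has no rogue first-order term: it equals $(L_{i,j}F)\cdot\na_v + (L_{i,j}a)\cdot\na_x$ plus a linear combination of the $L_{k,m}$ themselves (Lemma~\ref{Lij}). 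The compositions $L^{I,J}$ then replace your $\pa_x^{a}\pa_v^{b}$; at $t=0$ they reduce to pure $\pa_x$ derivatives (so your intuition about the anisotropic data assumption~\eqref{eq-thm} is correct), and they \emph{do} close under commutation. Even so, the resulting equation for $L^{I,J}f$ still carries the unavoidable source $-\pa_x^{\alpha(I,J)}F\cdot\na_v f$ (Lemmas~\ref{even}--\ref{odd'}), and this is \emph{not} handled by a Gr\"onwall-type energy estimate: after Duhamel and the Burgers straightening of the characteristics (Section~\ref{sec-Burg}), it produces a $v$-average of $\pa_x(\cdot)$ against a smooth kernel, and the apparent loss of one $x$-derivative is removed only by the averaging-type smoothing estimate of Proposition~\ref{proposition-K}, taken from~\cite{HKR}. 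Your proposal has no analogue of either the tailored operators (so the hierarchy does not close) or the smoothing estimate (so the loss of one derivative in the force is not recovered); these are the two central mechanisms of the proof, and both are missing.
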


%
 
 Thanks to~\eqref{Fj}, we immediatly deduce from~\eqref{conc-thm} that the force field satisfies as well the higher regularity 
 $$F^j \in L^2(0,T; H^{n'}_x).$$
Another consequence concerns the flow $(X,V)=(X(t,0,x,v),V(t,0,x,v))$ as defined
in \eqref{charac}, for which we also obtain  a higher regularity property.
\begin{corollary}
\label{coro}
 For some $T'\leq T$, we have
$$  \pa^\gamma_{x,v} (X-x-tv,V-v) \in {L^\infty(0,T'; L^\infty_v L^2_{x})}, \qquad \forall |\gamma|\leq n'.$$
\end{corollary}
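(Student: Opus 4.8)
The plan is to deduce Corollary~\ref{coro} from Theorem~\ref{thm} by propagating to the characteristic flow the spatial regularity of $F$ that the theorem produces. Rather than using representation formulas, I would differentiate the ODE system~\eqref{charac} in $(x,v)$ and run a Grönwall argument performed directly on these (linear, with variable coefficients) ODEs, but measured in the anisotropic norm $\|\cdot\|_{L^\infty_v L^2_x}$ rather than in $L^\infty_{x,v}$. The difficulty is exactly the one flagged in the introduction: Theorem~\ref{thm} and~\eqref{Fj} only give $F^j\in L^2(0,T;H^{n'}_x)$, so the top $x$-derivatives of $F$ are controlled only in $L^2_tL^2_x$, with no pointwise bound. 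This is overcome by two devices. First, for small time the map $x\mapsto X(s,0,x,v)$ is a diffeomorphism of $\T^d$ with nearly unit Jacobian, uniformly in $s$ and $v$, so composing with it is harmless in $L^2_x$ and one can absorb the worst $x$-derivatives of $F$. Second, a counting argument shows that in the Faà di Bruno expansion of $\pa^\gamma_{x,v}[F(s,X,V)]$ at most one factor can fail to be bounded in $L^\infty_{x,v}$, so there is always room to place the only "bad" factor in $L^2_x$ and everything else in $L^\infty$. I write the argument for the model case $a(v)=v$; the general case is identical after replacing $tv$ by $t\,a(v)$, since then the relevant perturbation $X-x-t\,a(v)=\int_0^t(a(V)-a(v))\,ds$ is still bounded, by~\eqref{deriv-a}.

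First I would collect the inputs. Since $f\in C(0,T;\Hc^n_r)$ and each $\psi_i$ has polynomial growth of order $r_0<r-\tfrac d2$, the moments $m_{\psi_i}$ lie in $C(0,T;H^n_x)$, so~\eqref{Fj-inft} gives $F^j\in L^\infty(0,T;H^n_x)$, while~\eqref{conc-thm} and~\eqref{Fj} give $F^j\in L^2(0,T;H^{n'}_x)$. Writing $z$ for the spatial arguments of $F$ and using $F=\sum_j A_j(v)F^j(t,x)$ with all $\pa^\alpha_v A_j$ bounded, Sobolev embedding then gives $\pa^k_zF\in L^\infty_tL^\infty_{x,v}$ for $k\le n-\lceil\tfrac d2\rceil-1$, $\pa^k_zF\in L^2_tL^\infty_{x,v}$ for $k\le n_0:=n'-\lceil\tfrac d2\rceil-1$, and $\pa^k_zF\in L^2_tL^\infty_vL^2_x$ for all $k\le n'$; note $n_0>\tfrac{n'}2$ since $n'>n\ge N$. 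For the base case, with $\Phi:=(X,V)(\cdot,0,\cdot,\cdot)$ one has $V-v=\int_0^t F(s,\Phi(s))\,ds$, hence $\|V-v\|_{L^\infty_tL^\infty_{x,v}}\le Ct$, and the first-order derivatives $\pa_{x,v}\Phi$ solve a linear system with coefficients built from $Da$ and $\pa_zF\in L^1_tL^\infty_{x,v}$, hence are bounded on $[0,T]$ with $\pa_xX=\mathrm{Id}+O(t)$. Shrinking $T$ to some $T'$, for every $s\in[0,T']$ and every $v$ the map $x\mapsto X(s,0,x,v)$ is a diffeomorphism of $\T^d$ with $\tfrac12\le|\det\pa_xX|\le2$, so that $\|g(X(s,0,\cdot,v))\|_{L^2_x}\le\sqrt2\,\|g\|_{L^2_x}$ for all $g$ and $v$.

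The heart of the proof is then an induction on $m=|\gamma|$, from $1$ up to $n'$, of the statement: $\pa^\gamma\Phi\in L^\infty(0,T';L^\infty_{x,v})$ if $m\le n_0$, and $\pa^\gamma\Phi\in L^\infty(0,T';L^\infty_vL^2_x)$ if $n_0<m\le n'$. Differentiating~\eqref{charac} gives $\pa^\gamma\Phi(t)=\int_0^t\pa^\gamma[F(s,\Phi(s))]\,ds$ (plus the strictly easier $a(V)$-contribution in the $X$-component), and by Faà di Bruno
\[
\pa^\gamma[F\circ\Phi]=\sum_{k\ge1}\ \sum_{\substack{\gamma_1+\dots+\gamma_k=\gamma\\ |\gamma_i|\ge1}}c_{\gamma_1,\dots,\gamma_k}\,(\pa^k_zF)\circ\Phi\,\prod_{i=1}^k\pa^{\gamma_i}\Phi .
\]
The term $k=1$ is $(\pa_zF)\circ\Phi\cdot\pa^\gamma\Phi$; since $\pa_zF\in L^1_tL^\infty_{x,v}$ this is the Grönwall term. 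In any term with $k\ge2$ one has $|\gamma_i|\le m-1$, and a short count using $n_0>n'/2$ and $n\ge N$ shows that among the $k+1$ factors $\pa^k_zF,\pa^{\gamma_1}\Phi,\dots,\pa^{\gamma_k}\Phi$ at most one fails to lie in $L^\infty_{x,v}$: either it is $\pa^k_zF$ (when $k>n_0$, forcing all $|\gamma_i|\le n_0$), or it is one $\pa^{\gamma_1}\Phi$ with $n_0<|\gamma_1|<m$ (forcing $k$ and all other $|\gamma_i|$ to be $\le n_0$, hence $\pa^k_zF$ and those factors bounded). That single exceptional factor is estimated in $L^\infty_vL^2_x$ — for $\pa^k_zF$ by the change of variables applied to its $F^j$-part, for $\pa^{\gamma_1}\Phi$ by the inductive hypothesis — every other factor in $L^\infty_{x,v}$, and a Cauchy–Schwarz in time upgrades the resulting $L^2_t$ bound to $L^\infty_t$. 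Collecting all terms, Grönwall closes the estimate at level $m$, with a bound polynomial in $T'$ and in $M$, and one passes from $F^j\in L^2_tH^{n'}_x$ (from~\eqref{Fj}) to the $L^\infty_t$ statement of the corollary through the outer time integral. The case $m=0$ follows directly from $V-v=\int_0^tF\,ds$ and $X-x-tv=\int_0^t(V-v)\,ds$. The only genuine obstacle is the top-order control of $F$ in $L^2_tL^2_x$ only; everything else is Leibniz/Faà di Bruno bookkeeping combined with the diffeomorphism change of variables and the gain of time integrability.
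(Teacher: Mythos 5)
Your proposal is correct and follows essentially the same route as the paper's own proof: both first obtain the $L^\infty_{x,v}$ bounds on low-order derivatives of $(X-x-tv,V-v)$ by Gr\"onwall, then pass to the $L^\infty_v L^2_x$ estimate at order $n'$ via the Fa\`a di Bruno expansion of $\pa^\gamma_{x,v}[F(s,\Phi)]$, using the same counting argument (that at most one factor can escape the $L^\infty_{x,v}$ bounds, because two indices exceeding $n'-d/2$ or $n_0$ would force $n'\lesssim d$) and the same small-time diffeomorphism change of variable $y=X(s,0,x,v)$ to place the one exceptional factor in $L^2_x$. The only difference is bookkeeping: you run a single induction on $|\gamma|$ interleaving the $L^\infty_{x,v}$ and $L^\infty_vL^2_x$ levels, whereas the paper proves all the $L^\infty_{x,v}$ bounds first and then closes the estimate on the quantity $\mathcal{N}(t)=\sup_{|\alpha|\le n'}\sup_{[0,t]}\|\pa^\alpha_{x,v}Z\|_{L^\infty_vL^2_x}$ in one Gr\"onwall step.
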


\begin{remark}
Some remarks about Theorem~\ref{thm} are in order.
\begin{itemize}

\item In the case where $n = 2m-1$ and $n'=n+1 =2m$, the assumption~\eqref{eq-thm} simply reads $\pa^{n+1}_x f_0 \in \Hc^0_r$ and we obtain the $L^2_t H^{n+1}_x$ smoothness of the moments: in other words this gives an answer to the question raised in the beginning of the introduction. Note though that the regularity result we prove is not pointwise in $t$.

\item Observe that it is required that the higher regularity index $n'$ is not too large compared to $n$ (i.e., $n> \lfloor \frac{n'}{2} \rfloor+1$); such a restriction is somehow reminiscent of a similar one appearing in the celebrated result of Bony concerning the propagation of Sobolev microlocal regularity at characteristic points for general non-linear PDEs, see \cite[Th\'eor\`eme 6.1]{Bony}. We remark however that the class of PDEs considered in this work does not enter the framework of \cite{Bony}, in particular because of the ``non-locality'' in velocity. We refer to Section~\ref{sec-rk} for some remarks and (counter-)examples in this direction.

\item As a matter of fact, our result can  be somehow interpreted as a kinetic (and non-local) analogue of Bony's aforementioned theorem.

\item If it is ensured that the  solution $(f(t), F(t))$ to \eqref{abstract}  is \emph{global} (e.g. for Vlasov-Poisson in dimension $d\leq 3$, see \cite{LP,Pfa,SchVP,BR,Hor}), we do not know if the higher propagation of regularity for the moments may or may not be global.

\item Let us mention that in a somewhat different direction, a vector field method was devised in \cite{Smu} (see also \cite{FJM}) in order to prove time decay of moments for Vlasov equations set in unbounded spaces.

\end{itemize}
\end{remark}

In the case where the force is one-derivative smoother than the distribution function $f$ itself (that is to say when estimates~\eqref{Fj} hold with $n-1$ instead of $n$ in the right-hand side), the statement of Theorem~\ref{thm} may be strengthened, insofar as one may ask only for derivatives in $x$ in the regularity assumption~\eqref{eq-thm}. We refer to such as case as the transport/elliptic case, which includes in particular Vlasov-Poisson, see Theorem~\ref{thm2} in Section~\ref{sec-ell}.

\bigskip

As already mentioned in the introduction, the higher regularity for moments as obtained in Theorem~\ref{thm} actually yields regularity for the solution itself (see \cite{Gerard} for a microlocal version of this fact, in the context of averaging lemmas) in anisotropic Sobolev spaces (as defined by H\"ormander in \cite[Chapter II, Section 2.5]{Horm}), that we first introduce.

\begin{definition}Let $m, n \in \R$. The anisotropic Sobolev space $H^{m,n}_{x,v}$ is defined as 
$$
H^{m,n}_{x,v}: = \Bigg\{ g \in \mathcal{S}'(\T^d \times \R^d), \, (1+|k|^2)^{m/2} (1+|\eta|^2)^{n/2} \widehat{g}(k,\eta) \in L^2( \Z^d \times \R^d) \Bigg\},
$$
where $\widehat{g}$ stands for the Fourier transform\footnote{where $\widehat{g}(k,\eta)= 1/(2\pi)^d \int_{\T^d \times \R^d}  g(x,v) e^{-i  x \cdot k} e^{-i v \cdot \eta} \, \, dx dv$, although the convention that is chosen for the writing of the Fourier transform does not matter here.} of $g$.
We also denote 
$$H^{m,-\infty}_{x,v} := \bigcup_{p \in \R} H^{m,p}_{x,v}.$$

\end{definition}

We have

\begin{corollary}
\label{cor}
Consider the same assumptions and notations as in Theorem~\ref{thm}. 
We have 
$$f(t,x,v) \in L^2(0,T; H^{n',-\infty}_{x,v}).$$
\end{corollary}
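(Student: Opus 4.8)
The plan is to deduce Corollary~\ref{cor} from Theorem~\ref{thm} by using \emph{oscillatory} test functions in $v$, which turns the control of moments into a control, pointwise in the velocity frequency, of the partial Fourier transform of $f$ in $v$. For $\eta\in\R^d$, I would take the test function $\psi_\eta(v):=e^{-iv\cdot\eta}$; since Theorem~\ref{thm} is stated for real test functions, I apply it separately to $\cos(v\cdot\eta)$ and $-\sin(v\cdot\eta)$, which is harmless by linearity. Two elementary facts drive the argument. First, $\psi_\eta$ is independent of $x$ and satisfies $|\pa_v^\beta\psi_\eta|\leq|\eta|^{|\beta|}$, while the weight $(1+|v|^2)^{-r_0/2}$ is $\leq 1$; hence
\[
\|\psi_\eta\|_{L^\infty(0,T;\mathcal{W}^{n',\infty}_{-r_0})}\;\leq\;\sum_{|\beta|\leq n'}|\eta|^{|\beta|}\;\leq\;C_{n',d}\,(1+|\eta|)^{n'} .
\]
Second, the associated moment is precisely the partial Fourier transform of $f$ in $v$, which is well defined since $f(t)\in\Hc^n_r\subset L^2_{x,v}$:
\[
\widehat{f}^{\,v}(t,x,\eta):=\int_{\R^d}f(t,x,v)\,e^{-iv\cdot\eta}\,dv=\int_{\R^d}f(t,x,v)\,\psi_\eta(v)\,dv .
\]

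Feeding $\psi_\eta$ into~\eqref{conc-thm} then yields, for every $\eta\in\R^d$,
\[
\int_0^T\big\|\widehat{f}^{\,v}(t,\cdot,\eta)\big\|_{H^{n'}_x}^2\,dt\;\leq\;\Lambda_{\psi_\eta}(T,M)^2 .
\]
The one point that requires care — and which I regard as the only genuine obstacle — is to check that the function $\Lambda_\psi$ produced by Theorem~\ref{thm} depends at most \emph{polynomially} on $\|\psi\|_{L^\infty(0,T;\mathcal{W}^{n',\infty}_{-r_0})}$. This is to be read off from the proof of Theorem~\ref{thm}: once $f$ and the force field have been controlled, the moment $\int f\psi\,dv$ is linear in $\psi$ and the commutator and characteristics estimates leading to~\eqref{conc-thm} are applied to this already-controlled solution, so one in fact expects a \emph{linear} dependence. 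Granting this, together with the bound on $\|\psi_\eta\|$, there are an integer $q$ and a constant $C=C(T,M)$ with
\[
\int_0^T\big\|\widehat{f}^{\,v}(t,\cdot,\eta)\big\|_{H^{n'}_x}^2\,dt\;\leq\;C\,(1+|\eta|)^{2q},\qquad\forall\,\eta\in\R^d .
\]

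Finally I would integrate this bound in $\eta$ against a sufficiently negative power of $1+|\eta|^2$. Denoting by $\widehat{f}(t,k,\eta)$ the full Fourier transform of $f(t,\cdot,\cdot)$ on $\T^d\times\R^d$, so that $\sum_{k\in\Z^d}(1+|k|^2)^{n'}|\widehat{f}(t,k,\eta)|^2$ equals $\|\widehat{f}^{\,v}(t,\cdot,\eta)\|_{H^{n'}_x}^2$ up to a fixed constant, I would pick an integer $p$ with $2p+2q<-d$ (for instance $p=-q-d$), and combine the definition of $H^{n',p}_{x,v}$ with Tonelli's theorem:
\[
\int_0^T\|f(t)\|_{H^{n',p}_{x,v}}^2\,dt\;\leq\;C\int_{\R^d}(1+|\eta|^2)^{p}\left(\int_0^T\big\|\widehat{f}^{\,v}(t,\cdot,\eta)\big\|_{H^{n'}_x}^2\,dt\right)d\eta\;\leq\;C\int_{\R^d}(1+|\eta|^2)^{p}(1+|\eta|)^{2q}\,d\eta\;<\;\infty .
\]
This gives $f\in L^2(0,T;H^{n',p}_{x,v})\subset L^2(0,T;H^{n',-\infty}_{x,v})$, which is the statement of Corollary~\ref{cor}. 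Everything except the polynomial-in-$\|\psi\|$ bookkeeping in Theorem~\ref{thm} is soft — Plancherel in $v$ and the Fubini–Tonelli theorem.
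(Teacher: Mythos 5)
Your proof is correct and follows the same route as the paper: plugging the oscillatory test functions $\psi_\eta(v)=e^{-iv\cdot\eta}$ into the moment estimate~\eqref{conc-thm}, noting polynomial dependence of $\Lambda_\psi$ on $\|\psi_\eta\|_{\mathcal{W}^{n',\infty}_{-r_0}}\lesssim(1+|\eta|)^{n'}$, identifying the moment with $\mathcal{F}_v f$, and then integrating in $\eta$ against $(1+|\eta|^2)^{p}$ with $p$ sufficiently negative. Your treatment is marginally more explicit than the paper's (real/imaginary decomposition, the explicit choice $p=-q-d$, and the Tonelli step), but the idea and structure coincide.
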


Corollary~\ref{cor} is a direct consequence of some estimates obtained in the proof of Theorem~\ref{thm}; we will provide a proof of this fact in Section~\ref{sec-end}. It is actually possible to give an estimate of a value of $p <0$ such that $f \in L^2(0,T; H^{n',p}_{x,v})$.

\subsection{Overview of the proof}

{
We discuss in this section the ingredients, inspired from~\cite{HKR}, leading to the higher propagation of regularity for the moments
(the local well-posedness theory is fairly standard, see Section~\ref{sec-local}). We shall discuss here the case $n = 2k-1$ and $n'=n+1 =2k$. To ease readability, we assume here that the dimension is $d=1$ (in higher dimension, the algebra is more involved but the basic principle is the same).


\bigskip

\noindent {\bf A. Taking derivatives.}
Since we intend to propagate regularity in space, 
the first step consists in understanding how to appropriately apply derivatives in $x$ to the Vlasov equation~\eqref{abstract}.

We note that applying the operator $\pa^\alpha_x$ does not seem relevant, as it does not commute well with the operator $F  \pa_v$: as a result it is not possible to obtain a closed equation bearing on $\pa^\alpha_x f$ without appealing to $\pa^\beta_x \pa^\gamma_v f$ for $\gamma \neq 0$, and therefore such an approach would require a control of derivatives in $v$ which we do not have at initial time (this is of course reminiscent of the mixing in $x$ and $v$ that we have evoked in the introduction).

The idea is to look for more appropriate differential operators, with non-constant coefficients, satisfying the following three key properties:
\begin{itemize}
\item at initial time, they involve only derivatives in $x$;
\item they enjoy good commutation properties with the transport operator, so that it is eventually possible to obtain closed systems involving these differential operators alone;
\item they allow a good control of the Sobolev norm of the moments.
\end{itemize}
It turns out that second order differential operators in $x$ and $v$, with coefficients depending on the solution itself will be appropriate. 
More precisely, we consider  the operator
$$
    L := \pa^2_{x} + \varphi (t,x)\pa_{x} \pa_{v} + \psi(t,x)  \pa^2_{v},
$$
whose coefficients $\varphi $ and $\psi$ will depend on the force field $F$.
Setting $\Tc := \pa_t + a(v) \pa_x + F \pa_v$ the transport operator, we ask that the coefficients $\varphi,\psi$ solve a semi-linear system of the form
$$
\left\{
\begin{aligned}
&\Tc  \phi = 2 \pa_x F  +  G_1 \left(\phi,  \psi,  \pa_{x,v} F\right)
\\  
&\Tc \psi = G_2 \left(\phi,  \psi, \pa_{x,v} F\right), \\
&\varphi|_{t=0} =0, \, \psi|_{t=0} =0.
\end{aligned}
\right.
$$
where $G_1,G_2$ are polynomial functions of degree greater or equal to $2$; this corresponds to zero-order coupling terms.
Note  in  particular that by definition, $  L = \pa^2_{x}$ at time $t=0$. 
The semi-linear system is precisely chosen in order to cancel bad terms in the commutation between  $L$ and $\Tc$, so that for any function $g$, 
$$
L \Tc(g) = \Tc L (g) +  (L F)  \pa_v g + (La) \pa_x g+   (\pa_v a)   \varphi L g.
$$
Applying this identity to the solution $f$ of the Vlasov equation~\eqref{Vlasov}, this yields
$$
 \Tc L (f) =-  (L F)  \pa_v f  -(La) \pa_x f - (\pa_v a)   \varphi L f.
$$
   This  formula will play a key role in the analysis. The main term (in terms of regularity issues) is the term $-\pa_{x}^2 F   \pa_v f$, the others  involving either more regular quantities (we recall indeed that $F$ and $a$ are assumed to be smooth with respect to $v$), or the quantity $L f$, which paves the way for a closed system involving only compositions of $L$ applied to $f$. 
As a consequence, the operators obtained as compositions of  $L$
appear to be relevant for applying  higher order derivatives in $x$, since by construction 
\begin{itemize}
\item they require only a control of  space regularity at initial time;
\item denoting by ${L_k}$ the composition of $k$ operators $L$, one can obtain that $L_k f$ satisfies an equation of the form
\begin{equation}
\label{ab}
\Tc (L_k f) =  A( L_k f) -  (\pa^{2k}_x  F) \pa_v f +   G( (\pa^\alpha_{x,v} f)_{|\alpha| \leq 2k-1}),
\end{equation}
where $A,G$ are bounded linear operators. We note that this equation involves derivatives in $v$ of the solution, but only of order $2k-1=n$, which we control thanks to the local well-posedness theory.  This can therefore be seen as a closed equation for $L_k f$.
\item One can  show that for any smooth test function $\psi$,
$$
\int_{\R}  (L_k  f) \, \psi(t,x,v) \,dv =  \int_{\R} (\pa^{2k}_x f) \, \psi(t,x,v) \,dv + \text{``controlled terms''}.
$$
In the controlled terms, the overload of derivatives in $v$ falling on $f$ is transferred to $\psi$ by an integration by parts argument.
\end{itemize}

All in all, this eventually shows that the $L_k$ are indeed well-suited to study the regularity of moments.
This step is fully developed in Section~\ref{sec-DO}. There are two separate difficulties in order to complete this task: obtaining the right algebra as discussed here, and proving  Sobolev estimates for all the involved objects.

(In the case where $n'>n+1$, we need to set up  an induction argument, and this leads the study of  successive systems of coupled kinetic transport equations, which build on the general equation~\eqref{ab}.)

\bigskip


\noindent {\bf B. Propagation of regularity on moments.}
We then turn to the study of moments of the solutions to~\eqref{ab}.  
This step is partly inspired from (and thus related to) the treatment of linear Landau damping by Mouhot and Villani \cite{MV}.

We first use the method of characteristics to invert the operator $\Tc - A$.
It is convenient at this stage to use
changes of variables in velocity (introduced and studied in Section~\ref{sec-Burg}),
in order to straighten characteristics and eventually, roughly speaking, come down from $\Tc$ to the free transport operator $\pa_t + a(v) \cdot \na_x$. To this end, it turns out to be efficient to introduce the change of variables $v \mapsto \Phi$ where $\Phi$ solves the Burgers equation
$$
\pa_t \Phi + a(\Phi) \cdot \na_x \Phi = F(\cdot,\Phi), \qquad \Phi|_{t=0}=v,
$$
where we can prove  that $\Phi$ remains close to $v$ in small time (in terms of Sobolev norms). 
The problem comes down to the understanding of the contribution of the term $ -  (\pa^{2k}_x  F) \pa_v f $, and eventually roughly reduces to the study of an equation of the type
$$
\begin{aligned}
H_1(t,x) = &\int_{0}^t \int_{\R} (\pa_x H_2) (s,  x - (t-s) a(v))   
      U(t,s,x,v)\, dv ds  \\
      &+ \text{``controlled terms''},
      \end{aligned}
$$
where we know only that $H_2$ is controlled in $L^2(0,T; L^2_x)$ and $U$ is smooth, and we seek for a bound of $H_1$  in $L^2(0,T; L^2_x)$ (such an estimate corresponds to a control on the moments of $L_k f$). 
The integral in time is due to the use of Duhamel's formula, and the integral in $v$ to the fact that we study moments in $v$.
We observe that the operator in the right-hand side seems to feature a loss of derivative in $x$.
However, we use a smoothing effect to overcome this apparent loss, which was proved in \cite{HKR}. The outcome is the estimate
\begin{multline*}
\left\| \int_{0}^t \int_{\R^d} (\na_x H_2) (s,  x - (t-s) a(v))  \cdot 
      U(t,s,x,v)\, dv ds  \right\|_{L^2(0,t;L^2_x)} \\
       \lesssim \| H_2\|_{L^2(0,t; L^2_x)} \sup_{0\leq t,s \leq T} \| U(t,s, \cdot) \|,
\end{multline*}
where $\|\cdot \|$ stands for a high order weighted Sobolev norm (in $x$ and $v$) which we will make precise later.
As noted in~\cite{HKR}, this is reminiscent of (but different from) classical \emph{kinetic averaging lemmas}, as it loosely speaking involves the gain of one full derivative; we refer to Section~\ref{sec-K} for a thorough discussion. 


}
\subsection{Content of the end of the paper}

The paper is then organised as follows: the proofs of Corollaries~\ref{coro} and~\ref{cor} are provided in the end of Section~\ref{sec-end}. In Section~\ref{sec-Vla}, we check the general assumptions for the Vlasov-Poisson and relativistic Vlasov-Maxwell equations, and discuss some extensions as well.
As already mentioned, Section~\ref{sec-ell} is devoted to the particular case of the transport/elliptic case, for which Theorem~\ref{thm} can be improved.
We end the paper  with the study of two examples that we cook-up in order to discuss  the  regularity assumptions of Theorem~\ref{thm}.

\bigskip

We will prove Theorem~\ref{thm}  when $n$ is odd, of the form $n = 2m-1$, 
and the higher regularity index $n'$ is even,  of the form $n' =2(m+p)$. The other cases follow by the same arguments.
The requirement on $n$ and $n'$ is $m> p +2$.
 The assumption~\eqref{eq-thm} reads in this case
\begin{equation}
\label{eq-thm-proof}
\begin{aligned}
\pa_x^{2(m-p+k)} \pa^\alpha_x \pa^\beta_v f_0 \in \Hc^0_r, \qquad \forall |\alpha| + |\beta| =2p-k,  \qquad \forall k=0,\cdots, 2p.
\end{aligned}
\end{equation}


\section{Local well-posedness}
\label{sec-local}
We prove in this section a basic local Sobolev well-posedness result for~\eqref{abstract}.
We start by recalling useful product estimates in weighted Sobolev spaces, taken from \cite{HKR}.
\begin{lemma}
\label{lem-calculus}
  Let $s$ be a non-negative integer. Consider a smooth nonnegative  function $\chi= \chi(v)$ that satisfies $|\partial^\alpha \chi| \leq C_{\alpha} \chi$ for every multi-index $\alpha$ such that $|\alpha| \leq s$.
     \begin{itemize}
     
            \item Consider two functions $f=f(x,v)$, $g= g(x,v)$, then we have for $k \geq s/2$
     \begin{equation}
     \label{com1}
     \| \chi f g \|_{H^s_{x,v}} \lesssim \| f\|_{W^{k, \infty}_{x,v}} \|\chi g\|_{H^s_{x,v}} +   \| g\|_{W^{k, \infty}_{x,v}} \|\chi f\|_{H^s_{x,v}}.
        \end{equation}
     
     \item Consider a  function $E= E(x)$ and a  function $F(x,v)$,  then we have  for any $s_{0}>  {d }$  that
      \begin{equation}
      \label{com3}
       \left \| \chi  E F  \right\|_{H^s_{x,v}} \lesssim  \|E\|_{H^{s_{0} }_{x} } \| \chi F\|_{H^{s}_{x,v}} +  \|E\|_{H^s_{x}} \|\chi F \|_{H^{s}_{x,v}}.
      \end{equation}

       \item   Consider a vector field $E= E(x)$, a function $A(v)$, and a  function $f=f(x,v)$,  then we have  for any $s_{0}> 1+ {d }$ and for any multi-indices $\alpha$, $ \beta$ such that  $|\alpha |+ |\beta |= s \geq 1$ that
      \begin{equation}
      \label{com2}
       \left \| \chi \left[ \partial_{x}^\alpha \partial^\beta_{v}, A(v) E(x)\cdot \nabla_{v}\right]  f\right\|_{L^2_{x,v}} \lesssim  \| A\|_{W^{s,\infty}_v}(  \|E\|_{H^{s_{0} }_{x} } \| \chi f\|_{H^{s}_{x,v}} +  \|E\|_{H^s_x} \|\chi f \|_{H^{s}_{x,v}}).
      \end{equation}
\item  Consider two  functions $f=f(x,v)$, $g= g(x,v)$, then we have for multi-indices $\alpha, \beta $
       with $| \alpha | + | \beta | \leq s$ that 
    \begin{equation}
    \label{comdual}
     \| \partial_{x,v}^\alpha f\, \partial_{x,v}^{\beta} g\|_{L^2} \lesssim  \| { 1 \over \chi} f\|_{L^\infty_{x,v}} \| \chi g \|_{H^s_{x,v}} +  \| \chi g \|_{L^\infty_{x,v}} \| {1 \over \chi} f \|_{H^s_{x,v}}.
    \end{equation}

 \end{itemize}

\end{lemma}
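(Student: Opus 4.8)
The plan is to derive all four inequalities from a single mechanism: the Leibniz rule, the defining property of the weight $\chi$, and classical Gagliardo--Nirenberg interpolation together with the Sobolev embedding $H^{s_0}_x\hookrightarrow L^\infty_x$, which holds for $s_0>d/2$ and thus a fortiori for the thresholds $s_0>d$ and $s_0>1+d$ appearing in the statement. First I would record the elementary remark that the hypothesis $|\partial^\alpha\chi|\le C_\alpha\chi$ for $|\alpha|\le s$ passes to the reciprocal, i.e. $|\partial^\alpha(1/\chi)|\le C'_\alpha/\chi$ for $|\alpha|\le s$, which follows by induction from the quotient rule. The role of this observation is that in any Leibniz expansion, every derivative falling on $\chi$ (resp. on $1/\chi$) produces a factor controlled pointwise by $\chi$ (resp. by $1/\chi$) up to a constant; this lets me pull the weight out of each term and reduce every inequality to an unweighted, or singly weighted, bilinear $L^2$ estimate.

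For~\eqref{com1} I would expand $\partial^\gamma(\chi fg)$ with $|\gamma|\le s$, absorb the derivatives of $\chi$ as above, and reduce to bounding $\|\chi\,\partial^b f\,\partial^c g\|_{L^2}$ for $|b|+|c|\le s$. Since $\min(|b|,|c|)\le s/2\le k$, one of the two factors carries at most $k$ derivatives: I place that one in $L^\infty$ (controlled by $\|f\|_{W^{k,\infty}_{x,v}}$ or $\|g\|_{W^{k,\infty}_{x,v}}$) and keep the other, together with the weight, in $L^2$ (controlled by $\|\chi f\|_{H^s_{x,v}}$ or $\|\chi g\|_{H^s_{x,v}}$), and conclude by H\"older's inequality. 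Estimate~\eqref{comdual} is treated the same way after writing $\partial^\alpha_{x,v}f=\partial^\alpha_{x,v}\bigl(\chi\cdot(f/\chi)\bigr)$ and $\partial^\beta_{x,v}g=\partial^\beta_{x,v}\bigl((1/\chi)\cdot(\chi g)\bigr)$: the $\chi$ and $1/\chi$ factors then cancel and one is left with $\sum\|\partial^a(f/\chi)\,\partial^b(\chi g)\|_{L^2}$ with $|a|+|b|\le s$, which is exactly the classical tame product bound $\|\partial^a u\,\partial^b w\|_{L^2}\lesssim\|u\|_{L^\infty}\|w\|_{H^s}+\|w\|_{L^\infty}\|u\|_{H^s}$, itself a consequence of Gagliardo--Nirenberg and H\"older.

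For~\eqref{com3} I would again expand $\partial^\gamma(\chi EF)$ with $|\gamma|\le s$; since $E$ depends on $x$ only, the derivatives hitting $E$ are pure $x$-derivatives, so after absorbing $\chi$ it remains to bound $\|\chi\,\partial_x^a E\,\partial_{x,v}^b F\|_{L^2_{x,v}}$ with $|a|+|b|\le s$ and $a$ a multi-index in $x$ alone. When $|a|$ is small I put $\partial_x^a E$ in $L^\infty_x$ via $H^{s_0}_x\hookrightarrow L^\infty_x$ (the strict inequalities $s_0>d$, resp. $s_0>1+d$, leaving enough room in the embedding to absorb a few low-order $x$-derivatives of $E$) and keep $\chi\partial^b F$ in $L^2$, producing the $\|E\|_{H^{s_0}_x}\|\chi F\|_{H^s}$ contribution; when $|a|$ is large, $|b|$ is correspondingly small, and an interpolation in $x$ yields the $\|E\|_{H^s_x}$ contribution. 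Finally I would reduce~\eqref{com2} to~\eqref{com3}: the commutator removes precisely the top-order term in which all $s$ derivatives $\partial_x^\alpha\partial_v^\beta$ act on $f$ while $\nabla_v$ differentiates $f$, so every surviving term carries at least one derivative on the coefficient $A(v)E(x)$; writing $\partial_x^a\partial_v^b\bigl(A(v)E(x)\bigr)=\partial_v^b A\,\partial_x^a E$, factoring out $\|A\|_{W^{s,\infty}_v}$, and noting that the remaining factor $\partial^{\alpha-a}_x\partial^{\beta-b}_v\nabla_v f$ carries at most $s$ derivatives of $f$ (because $|a|+|b|\ge1$), each term is of the product type already handled in~\eqref{com3} with $F$ replaced by a derivative of $f$.

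None of these steps is deep; the only point requiring genuine care is bookkeeping — tracking how the weight distributes through the Leibniz expansions, which is exactly what the structural property of $\chi$ (as opposed to an explicit polynomial weight) is designed to make transparent, and, in~\eqref{com3}--\eqref{com2}, checking that the $x$-only factor $E$ is always placed in a norm that the hypotheses on $s_0$ actually control. I therefore expect the write-up to be short, essentially a careful reproduction of the corresponding arguments in~\cite{HKR}.
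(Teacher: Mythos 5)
The paper does not prove this lemma: it is stated with the remark that the estimates are ``taken from~\cite{HKR}'' (they appear there as a calculus lemma proved by exactly the ingredients you list). Your reconstruction is sound and follows the same route: the observation that $|\partial^\alpha\chi|\lesssim\chi$ and $|\partial^\alpha(1/\chi)|\lesssim 1/\chi$ lets the weight be absorbed in every Leibniz term, after which~\eqref{com1} and~\eqref{comdual} are the standard tame product bound, \eqref{com3} follows from a case split on the number of $x$-derivatives falling on $E$, and~\eqref{com2} reduces to~\eqref{com3} after noting that every commutator term carries at least one derivative on $A(v)E(x)=\partial^b_vA\,\partial^a_xE$ and therefore at most $s$ on $f$. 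One small point worth making explicit in a write-up of~\eqref{com3}: the ``large $|a|$'' regime is cleanest not as an interpolation but as $\|\partial^a_xE\|_{L^2_x}\,\|\chi\partial^bF\|_{L^\infty_xL^2_v}$, with the second factor bounded by $\|\chi F\|_{H^s_{x,v}}$ via the Sobolev embedding in $x$ (using that $|a|\ge s_0-d/2>d/2$ forces $|b|<s-d/2$); this is what produces the $\|\chi F\|_{H^s}$ in \emph{both} terms of~\eqref{com3}, rather than the $L^\infty$ norm of $\chi F$ that a symmetric tame estimate would give, and it is also why the thresholds are $s_0>d$ and $s_0>1+d$ rather than $s_0>d/2$.
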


\begin{proposition}
\label{lem-local}
Let $n>d+1$ and $r>r_0+d/2$.
Assume that $f_0 \in \Hc^{n}_r$ and $F^j(0) \in H^{n}_x$.
Then there exists $T>0$ such that there is a unique solution $(f(t),F(t))$ with initial data $(f_0, F(0))$ to~\eqref{abstract} such that $f(t) \in C(0,T; \Hc^{n}_r)$ and $F^j (t) \in L^\infty(0,T; H^{n}_x)$.
\end{proposition}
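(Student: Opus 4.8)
The plan is a standard Picard iteration adapted to the weighted spaces $\Hc^n_r$. Starting from $f^{(0)}(t,\cdot):=f_0$, given $f^{(k)}\in C(0,T_0;\Hc^n_r)$ I would form the moments $m^{(k)}_{\psi_i}(t,x):=\int_{\R^d}f^{(k)}(t,x,v)\psi_i(v)\,dv$, let $(F^{(k),j})_j$ be the force components produced by the structural map from $\big((m^{(k)}_{\psi_i})_i,(F^j(0))_j\big)$, and then let $f^{(k+1)}$ solve the \emph{linear} transport equation $\pa_t f^{(k+1)}+a(v)\cdot\na_x f^{(k+1)}+F^{(k)}\cdot\na_v f^{(k+1)}=0$ with datum $f_0$. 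Since $F^{(k)}=\sum_j A_j(v)F^{(k),j}(t,x)$ with $A_j$ bounded by~\eqref{deriv-A} and $F^{(k),j}\in H^n_x\hookrightarrow L^\infty_x$ (as $n>d/2$), the field $F^{(k)}$ is bounded and locally Lipschitz on $\T^d\times\R^d$, while $a$ grows at most linearly by~\eqref{a}; hence the characteristics are globally defined and classical linear transport theory produces $f^{(k+1)}$, whose $\Hc^n_r$ regularity comes from the energy estimate below.

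\textbf{Uniform bounds.} First, Cauchy--Schwarz together with the polynomial growth~\eqref{eq-growthpsi} and the hypothesis $r>r_0+d/2$ gives $\|m^{(k)}_{\psi_i}(t)\|_{H^n_x}\lesssim\|f^{(k)}(t)\|_{\Hc^n_r}$, so~\eqref{Fj-inft} bounds $\|F^{(k),j}\|_{L^\infty(0,t;H^n_x)}$ in terms of $t$, $\|f^{(k)}\|_{L^\infty(0,t;\Hc^n_r)}$ and $\sum_j\|F^j(0)\|_{H^n_x}$. Then I would apply $\pa^\alpha_x\pa^\beta_v$ with $|\alpha|+|\beta|\le n$ to the linear equation, multiply by $(1+|v|^2)^r\pa^\alpha_x\pa^\beta_v f^{(k+1)}$ and integrate: because $\na_x\cdot a=0$ and $\na_v\cdot F^{(k)}=0$, the top-order transported term integrates by parts and leaves only the weight commutator $2r\int(1+|v|^2)^{r-1}(v\cdot F^{(k)})|\pa^\alpha_x\pa^\beta_v f^{(k+1)}|^2$, which is $\lesssim\|F^{(k)}\|_{H^n_x}\|f^{(k+1)}\|_{\Hc^n_r}^2$ since $|v\cdot F^{(k)}|\lesssim(1+|v|^2)^{1/2}\|F^{(k)}\|_{L^\infty_x}$; the commutator $[\pa^\alpha_x\pa^\beta_v,a(v)\cdot\na_x]$ produces only bounded $v$-derivatives of $a$ (by~\eqref{deriv-a}) against $\le n$ derivatives of $f^{(k+1)}$, and the commutator with $A_j(v)F^{(k),j}(x)\cdot\na_v$ is controlled by~\eqref{com2} of Lemma~\ref{lem-calculus} (picking $s_0\in(d+1,n]$, possible since $n>d+1$). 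Altogether $\frac{d}{dt}\|f^{(k+1)}(t)\|_{\Hc^n_r}^2\le C(1+\|F^{(k)}(t)\|_{H^n_x})\|f^{(k+1)}(t)\|_{\Hc^n_r}^2$. Setting $R:=2\|f_0\|_{\Hc^n_r}+1$, a bootstrap using Gronwall's lemma shows that $\|f^{(k)}\|_{L^\infty(0,T_0;\Hc^n_r)}\le R$ implies $\|f^{(k+1)}\|_{L^\infty(0,T_0;\Hc^n_r)}\le R$ provided $T_0=T_0(R,\sum_j\|F^j(0)\|_{H^n_x})$ is small; this yields uniform bounds in $L^\infty(0,T_0;\Hc^n_r)$ and, via~\eqref{Fj-inft}, in $L^\infty(0,T_0;H^n_x)$ for the forces.

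\textbf{Convergence, uniqueness, time-continuity.} The difference $g^{(k)}:=f^{(k+1)}-f^{(k)}$ solves the same transported equation with source $-(F^{(k)}-F^{(k-1)})\cdot\na_v f^{(k)}$ and zero datum. I would run the energy estimate one level below, in $\Hc^{n-1}_r$, so that at most $n$ derivatives ever fall on $f^{(k)}$ (controlled by the uniform bound): the source is bounded via~\eqref{com3} of Lemma~\ref{lem-calculus} (valid since $n-1>d$) by $\lesssim R\,\|F^{(k)}-F^{(k-1)}\|_{H^{n-1}_x}$, and then the stability estimate~\eqref{stab} (used at index $n-1$), together with $\|\int g^{(k-1)}\psi_i\,dv\|_{H^{n-1}_x}\lesssim\|g^{(k-1)}\|_{\Hc^{n-1}_r}$ and the fact that $\Gamma^{(j)\sharp}$ vanishes when its moment arguments vanish, gives $\|g^{(k)}\|_{L^\infty(0,T_0;\Hc^{n-1}_r)}\le\tfrac12\|g^{(k-1)}\|_{L^\infty(0,T_0;\Hc^{n-1}_r)}$ after shrinking $T_0$ once more (the contraction factor coming from $L^2_t\hookrightarrow\sqrt{T_0}\,L^\infty_t$). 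Hence $f^{(k)}\to f$ in $C(0,T_0;\Hc^{n-1}_r)$; interpolating with the uniform $\Hc^n_r$ bound gives convergence in $C(0,T_0;\Hc^m_r)$ for every $m<n$, membership $f\in L^\infty(0,T_0;\Hc^n_r)$, and weak continuity $f(t)\in\Hc^n_r$. Passing to the limit (the moments, hence by~\eqref{stab} the forces, converge) shows $(f,F)$ solves~\eqref{abstract}, with $F^j\in L^\infty(0,T_0;H^n_x)$ from~\eqref{Fj-inft}. Uniqueness on $[0,T_0]$ follows from the same $\Hc^{n-1}_r$ estimate applied to the difference of two solutions with identical data; strong continuity $f\in C(0,T_0;\Hc^n_r)$ then follows from weak continuity together with continuity of $t\mapsto\|f(t)\|_{\Hc^n_r}$, obtained by running the $\Hc^n_r$ energy identity on the solution itself, forwards and backwards in time.

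\textbf{Main obstacle.} The one genuinely delicate point is the classical loss of one derivative in the contraction step: the difference estimate cannot be closed at the top level $\Hc^n_r$, since the source $(F^{(k)}-F^{(k-1)})\cdot\na_v f^{(k)}$ would demand $\Hc^{n+1}_r$-control of $f^{(k)}$; the standard remedy of contracting in $\Hc^{n-1}_r$ and recovering the full regularity by interpolation and the weak/strong continuity argument is what I would use. Everything else is bookkeeping: carefully propagating the velocity weight $(1+|v|^2)^r$ through the commutators---using crucially that $a$ depends on $v$ only (so it commutes with $\pa_x$) and that the $A_j$ have bounded $v$-derivatives---and invoking the abstract estimates~\eqref{Fj-inft} and~\eqref{stab} for the force map at the appropriate regularity indices.
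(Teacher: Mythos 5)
Your proposal is correct and follows essentially the same route as the paper: Picard iteration on the linearized transport equation, uniform $\Hc^n_r$ energy estimates using the weighted product/commutator estimates of Lemma~\ref{lem-calculus} and the force bound~\eqref{Fj-inft}, a contraction estimate at a lower regularity to avoid the classical loss of one derivative on the source $(F^{(k)}-F^{(k-1)})\cdot\na_v f^{(k)}$, and finally passage to the limit with the stability hypothesis~\eqref{stab}. The one cosmetic difference is that the paper contracts all the way down in $\Hc^0_r$ (so that the source only needs $\na_v f^{(k)}$ in $\Hc^0_r$ and the force difference in $L^2_x$), whereas you contract one level below the top in $\Hc^{n-1}_r$; both close, and your choice slightly shortens the interpolation step needed to recover continuity in $\Hc^n_r$ at the cost of invoking~\eqref{stab} at the index $n-1$ rather than at~$0$.
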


\begin{proof}[Proof of Proposition~\ref{lem-local}]
The existence part follows from a standard iterative construction. We define recursively a sequence of distribution functions $(f_{(k)})_{k \in \N}$, denoting by $F_{(k)}$ the force field associated to $f_{(k)}$ and the initial condition $(F^j(0))$. 
Let us denote 
$$
R_0 := \| f_0 \|_{\Hc^{n}_r} +  \sum_{j=1}^{\ell} \|F^{j} (0) \|_{H^n_x}
$$
We set $f_{(0)} := f_0$ and assume that $f_{(k)}$ is already constructed (with associated force field $F_{(k)}$), and is such that for some $T_k>0$, $f_{(k)} \in C(0,T_k; \Hc^n_r)$, and 
\begin{equation}
\|f_{(k)}\|_{L^\infty(0,T_k; \Hc^n_r)} \leq 2R_0.
\end{equation}
We define $f_{(k+1)}$ as the unique solution on $[0, T_k)$  to the equation
\begin{equation}
\label{iteration}
\pa_t f_{(k+1)} + a(v) \cdot \na_x f_{(k+1)} + F_{(k)} \cdot \na_v f_{(k+1)} =0,  \quad f_{(k+1)}|_{t=0} = f_0,
\end{equation}
obtained by  the method of characteristics.

Applying the operator $\pa^\a_x \pa^\b_v$ to~\eqref{iteration} for $|\a| +|\b| \leq n$ yields
$$
(\pa_t  + a(v) \cdot \na_x  + F_{(k)} \cdot \na_v) (\pa^\a_x \pa^\b_v f_{(k+1)}) + \left [ \partial_{x}^\alpha \partial_{v}^\beta, a(v) \cdot \na_x +  F_{(k)} \cdot \nabla_{v} \right] f_{(k+1)}   =0.
$$
 We then take  the $L^2$ scalar product with $(1+ |v|^2)^{r} \pa^\a_x \pa^\b_v f_{(k+1)}$ and sum for all $|\a| + |\b| \leq n$.  By using~\eqref{deriv-a}, we have
 $$
\sum_{|\a| + |\b| \leq n} \int |\left [ \partial_{x}^\alpha \partial_{v}^\beta, a(v) \cdot \na_x  \right] f_{(k+1)} \pa^\a_x \pa^\b_v f_{(k+1)}|  (1+ |v|^2)^{r} \,dv dx 
\leq \| f_{(k+1)} \|_{\Hc^{n}_r}^2.
 $$
Thanks to \eqref{deriv-A} and estimate \eqref{com2}  in Lemma~\ref{lem-calculus} with  $s = n$, $\chi (v)= (1+ |v|^2)^{r/2}$ and $s_{0}= n \,$  (recall that $n>d+1)$, we have for all $j\in \{1,\cdots,\ell\}$,
$$
       \left \| \chi \left[ \partial_{x}^\alpha \partial^{\beta}_{v}, A_j(v) F^j_{(k)}(x)\cdot \nabla_{v}\right]  f_{(k+1)}\right\|_{L^2_{x,v}} \lesssim  \| F^j_{(k)}\|_{H^{n}_x} \|f_{(k+1)} \|_{\Hc^{n}_{r} }.
      $$
      Therefore by Cauchy-Schwarz, we get 
       $$
       \begin{aligned}
\sum_{|\a| + |\b| \leq n} \int  |\left [ \partial_{x}^\alpha \partial_{v}^\beta, F_{(k)} \cdot \na_v  \right] f_{(k+1)} \pa^\a_x \pa^\b_v f_{(k+1)} | (1+ |v|^2)^{r} \,dv dx  \\
\lesssim \| F^j_{(k)}\|_{H^{n}_x}  \| f_{(k+1)} \|_{\Hc^{n}_r}^2.
\end{aligned}
 $$
Recalling that $\na_v \cdot F = 0$, we deduce  that for all $t \in (0,T_k)$, 
$$
\frac{d}{dt} \| f_{(k+1)}(t) \|_{\Hc^{n}_r}\lesssim \left(1+ \sum_{j=1}^\ell \| F^j_{(k)}\|_{H^n_x}\right)\| f_{(k+1)}(t) \|_{\Hc^{n}_r} \\
$$
so that
\begin{equation}
 \| f_{(k+1)}(t) \|_{\Hc^{n}_r} \lesssim  \| f_0 \|_{\Hc^{n}_r} \exp \left[ C\int_0^t\left(1+ \sum_{j=1}^\ell \| F^j_{(k)}(s)\|_{H^n_x}\right)  ds  \right].
\end{equation}
We set 
$$
m_{\psi_i, (k)} (t,x)= \int_{\R^d} f_{(k)} (t,x,v)\psi_i(v) \,dv,
$$
and get, for $r'> d/2$ such that $r\geq r_0 +r'$ (which is possible thanks to the assumption $r> r_0 + d/2$), by Cauchy-Schwarz and~\eqref{eq-growthpsi}, that 
\begin{align*}
&\| m_{\psi_i, (k)} \|_{L^2(0,t;H^n)}  = \sum_{|\a| \leq n} \left\| \left( \int_{\R^d} \pa^\a_x f_{(k)} {\psi_i} \,dv  \right)^2 \right \|_{L^2(0,t;L^1_x)}^{1/2}\\
&\qquad\lesssim \sum_{|\a| \leq n} \left\| \left( \int_{\R^d} |\pa^\a_x f_{(k)}|^2(1+ |v|^2)^{r_0 + r'} \,dv  \right)  \left(\int_{\R^d} \frac{ |\psi_i|^2 \, dv}{(1+ |v|^2)^{r_0 +r'}}  \right) \right \|_{L^2(0,t;L^1_x)}^{1/2} \\
&\qquad\lesssim \sum_{|\a| \leq n} \left\| \left( \int_{\R^d} |\pa^\a_x f_{(k)}|^2(1+ |v|^2)^{r_0 + r'} \,dv  \right)  \left(\int_{\R^d} \frac{dv}{(1+ |v|^2)^{r'}}  \right) \right \|_{L^2(0,t;L^1_x)}^{1/2} \\
&\qquad\lesssim \| f_{(k)}\|_{L^2(0,t;\Hc^{n}_r)}.
\end{align*}
Therefore, by~\eqref{Fj}, denoting by $C>0$ a generic constant that does not depend on $t$ or $k$, we obtain
\begin{align*}
& \| f_{(k+1)}(t) \|_{\Hc^{n}_r} \\
  &\lesssim  \| f_0 \|_{\Hc^{n}_r} \exp \left[C t + C \sqrt{t} \sum_{j=1}^\ell  \| F^j_{(k)} \|_{L^2(0,t; H^n_x)} \ \right] \\
 &\lesssim  \| f_0 \|_{\Hc^{n}_r} \exp \Bigg[C t  \\
 &\qquad\qquad +C\sqrt{t} \sum_{j=1}^\ell \Gamma^{(j)}_{n} \left(t, ( \sqrt{t} \| m_{\psi_i, (k)} \|_{L^\infty(0,t;H^n_x)})_{i=1,\cdots,r}, \sum_{j=1}^{\ell} \|F^{j} (0) \|_{ H^n_x} \right)\Bigg]  \\ 
&\lesssim  \| f_0 \|_{\Hc^{n}_r} \exp \left[ C t + C\sqrt{t} \sum_{j=1}^\ell \Gamma^{(j)}_{n} \left(t, \sqrt{t}   \| f_{(k)}\|_{L^\infty (0,t;\Hc^{n}_r)}
, \sum_{j=1}^{\ell} \|F^{j} (0) \|_{H^n_x} \right) \right].
\end{align*}
We now observe that if we choose $T>0$ small enough so that
\begin{equation}
R_0 \exp \left[ CT +C\sqrt{T}  \sum_{j=1}^\ell \Gamma^{(j)}_{n}(T, 2 \sqrt{T} R_0,  R_0) \ \right] < 2 R_0,
\end{equation}
and  $T_k\geq T$,
then, 
\begin{equation}
\| f_{(k+1)}(t) \|_{L^\infty(0,T; \Hc^{n}_r)} \leq 2 R_0. 
\end{equation}
Therefore, by induction, we  obtain that for all $k \in \N$, $f_{(k)} \in C(0,T; \Hc^n_r)$, and 
\begin{equation}
\label{unifk}
\|f_{(k)}\|_{L^\infty(0,T; \Hc^n_r)} \leq 2R_0.
\end{equation}
%
For $k \in \N\setminus\{0\}$, we set $h_k := f_{(k+1)}-f_{(k)}$, which satisfies the equation
\begin{equation}
\pa_t h_k+ a(v) \cdot \na_x h_k + F[f_{k}] \cdot \na_v h_k  + (F[f_{(k)}]-F[f_{(k-1)}])\cdot \na_v f_{k}=0.
\end{equation}
By weighted $L^2$ estimates, proceeding as before, we get
\begin{multline*}
\frac{d}{dt} \| h_k(t) \|^2_{\Hc^{0}_r}\lesssim \left(1+ \sum_{j=1}^\ell \| F^j[f_{(k)}] \|_{H^n_x}\right)\| h_k(t) \|_{\Hc^{0}_r}^2  \\
+ \| f_{(k)}\|_{\Hc^n_r} \sum_{j=1}^\ell \| F^j[f_{(k)}]-F^j[f_{(k-1)}] \|_{L^2_x} \| h_k(t) \|_{\Hc^{0}_r}.
\end{multline*}
Let $t \in (0,T)$. Integrating in time, applying Cauchy-Schwarz and using the stability property~\eqref{stab} and the uniform estimates~\eqref{unifk} for $(f_{(k)})$, we obtain
$$
\begin{aligned}
&\| h_k\|_{L^\infty(0,t;\Hc^{0}_r)}  \\
&\lesssim \int_0^t  \left(1+ \sum_{j=1}^\ell \| F^j[f_{(k)}] \|_{H^n_x}\right)\| h_k(s) \|_{\Hc^{0}_r} \, ds + \sum_{j=1}^\ell \int_0^t \| F^j[f_{(k)}]-F^j[f_{(k-1)}] \|_{L^2_x}\, ds\\
&\lesssim \sqrt{t} \left[(\sqrt{t}+ \| F^j[f_{(k)}] \|_{L^2(0,t;H^n_x)})\| h_k \|_{L^\infty (0,t;\Hc^{0}_r)}+ \sum_{j=1}^\ell \| F^j[f_{(k)}]-F^j[f_{(k-1)}] \|_{L^2(0,t; L^2_x)}\right]\\ 
&\lesssim \sqrt{t}\Bigg[\| h_k \|_{L^\infty(0,t;\Hc^{0}_r)} +  \sqrt{t} \sum_{i=1}^r \Gamma^{(j)\sharp}_{n} \left(t, \left( \sqrt{t}\left\|\int (f_{(k)} - f_{(k-1)})\psi_i(v) \,dv \right\|_{L^\infty(0,t;L^2_x)}\right)_{i=1,\cdots,r} \right)\Bigg] \\
&\lesssim \sqrt{t}\left[\| h_k \|_{L^\infty(0,t;\Hc^{0}_r)} + \sqrt{t} \sum_{i=1}^r \Gamma^{(j)\sharp}_{n} \left(t, \sqrt{t}\| h_{k-1}\|_{L^\infty(0,t; \Hc^0_r)}\right)\right] . 
\end{aligned}
$$
We can thus pick a small enough time $T'>0$, independently of $k$ such that for all $k \in \N\setminus\{0\}$,
$$
\| f_{(k+1)}- f_{(k)}\|_{L^\infty(0,T';\Hc^{0}_r)}  \leq \frac{1}{2} \| f_{(k)}- f_{(k-1)}\|_{L^\infty(0,T';\Hc^{0}_r)}.
$$
We can therefore pass to the limit in~\eqref{iteration} and
 find that the limit $(f, F[f])$ satisfies (in the sense of distributions) 
\begin{equation}
\label{eqeq}
\pa_t f + a(v) \cdot \na_x f + F[f] \cdot \na_v f =0,
\end{equation}
with the initial conditions $(f_0, F^j(0))$. 
We deduce from~\eqref{eqeq} that $f \in C^0(0,T'; \Hc^n_r)$ and $\pa_t f \in L^2(0,T'; \Hc^{n-1}_{r-1})$. Also, thanks to~\eqref{Fj-inft}, we deduce  $F^j \in L^\infty(0,T'; H^{n}_x)$.
That the equation is satisfied in a classical way follows from the smoothness of $(f,F[f)])$. Uniqueness is also a consequence of the contraction estimate.

%

\end{proof}

The main matter is now to obtain the higher regularity statement for the moments.
To this end, we will focus only on the task of obtaining \emph{a priori} estimates for  \emph{smooth} solutions of~\eqref{abstract}; setting 
\begin{equation}\label{defM}
M := \| f_0 \|_{\Hc^{2m-1}_r} +\sum_{k=0}^{2p} \sum_{|\alpha| + |\beta| =2p-k} \|\pa_x^{2(m-p+k)} \pa^\alpha_x \pa^\beta_v f_0 \|_{\Hc^0_r}+\sum_{j=1}^\ell \|F^j(0)\| _{H^{2(m+p)}_x},
\end{equation}
 we look for some  time  $T_0>0$ depending only on $M$ such that given a smooth  test function $\psi \in L^\infty(0,T_0; \mathcal{W}^{2(m+p),\infty}_{-r_0})$,
the following estimate holds:
\begin{equation}
\label{apriori}
\left\| \int f \psi(v) \,dv  \right\|_{L^2(0,T_0; H^{2(m+p)}_x)} \leq C_\psi \Lambda(T_0,M).
\end{equation}
where $\Lambda$ is a polynomial function which is non-decreasing with respect to each of its arguments, once the others are fixed non-negative numbers. In what follows, the function $\Lambda$  may change from line to line but will always refer to such a function.

Once  \emph{a priori} estimates such as \eqref{apriori} as are obtained, we apply them to the sequence of solutions built in the iteration scheme proving the existence of solutions in the proof of Lemma~\ref{lem-local}. Passing to the limit yields the higher regularity for the moments of the solution $f(t)$.

\section{Differential operators}
\label{sec-DO}

{
In this section, we introduce and study the second order differential operators (with coefficients depending on $t$ and $x$) that we use in order to apply derivatives in $x$ on the Vlasov equation~\eqref{abstract}.

 The basic operators are defined in~\eqref{def-L} and the definition of the coefficients is provided in Lemma~\ref{lem-Lij}. 
 By definition these operators involve only derivatives in $x$ at initial time. The key algebraic result reflecting the good commutation properties of these operators with the transport operator is stated in Lemma~\ref{Lij}.
 
 The composition of these operators is then studied. 
\begin{itemize}
\item In Lemma~\ref{lem-momentsLij}, it is shown that they are indeed well-suited to study the regularity of moments, as after integration in $v$, they act like derivations in $x$ only (plus remainders that we can control).
The proof is quite technical as one needs to be careful of the limited available smoothness on the coefficients of the differential operators. Note that in  the statement, one does assume some (limited) higher order smoothness for the moments: this is in prevision of a forthcoming induction argument.

\item In Lemmas~\ref{even} and~\ref{odd}, the equations satisfied by the functions obtained after composition of these operators is established. This is where the key algebraic Lemma~\ref{Lij} appears to be crucial. Whereas the formal computation is  straightforward, here again, the proof appears to be quite technical in order to justify that remainders are indeed well controlled.
One also needs to be careful in order to get some Sobolev regularity for the coefficients involved in the equations.

\item As the systems of equations in Lemmas~\ref{even} and~\ref{odd} are not closed, this invites to study the system satisfied by a larger set of appropriate functions; this is the purpose of Lemmas~\ref{even'} and~\ref{odd'} (whose proof is similar to that of 
Lemmas~\ref{even} and~\ref{odd}).
\end{itemize}
}

\subsection{Second order operators}

As in the introduction, we set to ease readability
$\Tc := \pa_t + a(v) \cdot \na_x + F \cdot \na_v$ the transport operator.

\begin{lemma}
\label{lem-Lij}
Let $n>d+1$. Assume that $(F^j) \in  L^2(0,T';  H^n_x)$ with norm bounded by $\Lambda(T',M)$.
There is $T\in (0,T')$ such that there exists a unique smooth solution  $(\varphi_{k,l}^{i,j}, \psi_{k,l}^{i,j})_{i,j,k,l \in \{1,\cdots,d\}}$ on $[0,T]$ 
of the system:
      \begin{equation}
      \label{eq-constraint}
      \left\{
\begin{aligned}
&\Tc  \varphi_{k,l}^{i,j} = \sum_{k'} \pa_{v_{k'}} a(v)_k \psi_{k',l}^{i,j} +\sum_{k'} \pa_{v_{k'}} a(v)_k   \psi_{l,k'}^{i,j}  -\sum_{k',l',m} \pa_{v_{l'}}a(v)_m \varphi_{k',l'}^{i,j}   \varphi_{k,l}^{k',m}  \\
&\qquad+ \delta_{k,j} \pa_{x_i}F_l + \delta_{k,i} \pa_{x_j}F_l  + \sum_{l'} \varphi_{k,l'}^{i,j}  \pa_{v_{l'}}  F_l,  \\
&\Tc \psi_{k,l}^{i,j} =-   \sum_{k',l',m} \pa_{v_{l'}}a(v)_m \varphi_{k',l'}^{i,j}  \psi_{k,l}^{k',m}  +  \varphi_{k,l}^{i,j} \pa_{x_k} F_k +   \sum_{k'}  \psi_{k',l}^{i,j}   \pa_{v_{k'}} F_k \\
&\qquad  + \sum_{l'}  \psi_{k,l'}^{i,j}   \pa_{v_{l'}} F_l, \\
& \varphi_{k,l}^{i,j}\vert_{t=0} =  \psi_{k,l}^{i,j}\vert_{t=0} =0,
\end{aligned}
\right.
    \end{equation}
    where $\delta$ denotes the Kronecker function and $a(v)_k$ (resp. $F_k$) stands for the $k$-th coordinate of $a(v)$ (resp. $F$). Moreover we have the following estimates:
    \begin{equation}
    \label{est-Lij}
    \begin{aligned}
   &\sup_{[0,T]} \sup_{i,j,k,l} \| (\varphi_{k,l}^{i,j}, \psi_{k,l}^{i,j})\|_{W^{p,\infty}_{x,v}} \lesssim \Lambda(T,M), \qquad \forall p< n-1 - d/2, \\
      &\sup_{[0,T]} \sup_{i,j,k,l} \| (\varphi_{k,l}^{i,j}, \psi_{k,l}^{i,j})\|_{\Hc^{n-1}_{-\r}}\lesssim \Lambda(T,M), \qquad\forall \r > d/2.
      \end{aligned}
    \end{equation}
\end{lemma}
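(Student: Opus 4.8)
The system \eqref{eq-constraint} is a semilinear system of transport equations along the characteristics of $\Tc$, with forcing terms that are linear in $\pa_{x,v}F$ and polynomial (degree $\geq 2$, modulo the linear zero-order terms $\psi_{k',l}^{i,j}\pa_{v_{k'}}a$) in the unknowns themselves, with zero initial data. The natural strategy is a fixed-point / Picard iteration in a suitable weighted Sobolev space, combined with weighted $L^2$ energy estimates to close the bounds on a short time interval, and then propagating those bounds to the $W^{p,\infty}_{x,v}$ norm by Sobolev embedding. First I would set up the iteration: given a $k$-th iterate $(\varphi^{(k)},\psi^{(k)})$ bounded by $2C\Lambda(T,M)$ in $\Hc^{n-1}_{-\tilde r}$ (with $\tilde r>d/2$ to be chosen), define $(\varphi^{(k+1)},\psi^{(k+1)})$ as the solution of the linear transport system obtained by freezing the nonlinear terms at the $k$-th iterate; such a linear transport problem with smooth coefficients $a(v)$ and force $F\in L^2(0,T';H^n_x)$ is solved classically by the method of characteristics (as in the proof of Proposition~\ref{lem-local}), and one checks inductively that the iterates stay smooth.

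**Energy estimates.** The heart of the matter is the a priori estimate in $\Hc^{n-1}_{-\tilde r}$. I would apply $\pa_x^\alpha\pa_v^\beta$ with $|\alpha|+|\beta|\leq n-1$ to each equation, take the weighted $L^2$ inner product against $(1+|v|^2)^{-\tilde r}\pa_x^\alpha\pa_v^\beta$ of the corresponding unknown, and sum. The transport part $a(v)\cdot\na_x + F\cdot\na_v$ is handled exactly as in Proposition~\ref{lem-local}: the commutator with $a(v)\cdot\na_x$ is controlled by \eqref{deriv-a} and costs $\|\cdot\|_{\Hc^{n-1}_{-\tilde r}}^2$, while $F\cdot\na_v$ has divergence-free structure in $v$ and its commutator is controlled via \eqref{com2} of Lemma~\ref{lem-calculus} by $\|F\|_{H^n_x}\|\cdot\|_{\Hc^{n-1}_{-\tilde r}}$ (here I use $n>d+1$ so that $s_0=n$ is admissible, and note that the negative weight $(1+|v|^2)^{-\tilde r/2}$ still satisfies $|\pa^\alpha\chi|\leq C_\alpha\chi$). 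The genuinely new terms are: (i) the linear terms $\pa_{v_{k'}}a(v)_k\,\psi_{k',l}^{i,j}$ — these are fine since $\pa_v a\in W^{n-1,\infty}$ by \eqref{deriv-a}, and product with a bounded-in-$v$ function is controlled by \eqref{com1}; (ii) the source terms $\delta_{k,j}\pa_{x_i}F_l$, which contribute $\|F\|_{H^n_x}$; (iii) the quadratic terms such as $\pa_{v_{l'}}a(v)_m\varphi_{k',l'}^{i,j}\varphi_{k,l}^{k',m}$ and $\varphi\,\pa_v F$ — here I would use \eqref{com1} or \eqref{com3} in Lemma~\ref{lem-calculus} to bound a product of two $\Hc^{n-1}_{-\tilde r}$ functions (one in $W^{k,\infty}$ with $k\geq (n-1)/2$, which is ensured by $n-1>d+1>d/2$... actually one needs $n-1-d/2>(n-1)/2$, i.e. $n-1>d$, which holds) so that the product is again in $\Hc^{n-1}_{-2\tilde r}$ or, after absorbing the weight, in a space controlled by $\|\varphi\|_{\Hc^{n-1}_{-\tilde r}}^2$. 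Collecting, one gets a differential inequality of the form
\begin{equation*}
\frac{d}{dt}\mathcal{E}(t)\lesssim \big(1+\|F\|_{H^n_x}\big)\mathcal{E}(t)+\big(1+\|F\|_{H^n_x}\big)\mathcal{E}(t)^2+\|F\|_{H^n_x}^2
\end{equation*}
where $\mathcal{E}(t):=\sup_{i,j,k,l}\|(\varphi_{k,l}^{i,j},\psi_{k,l}^{i,j})(t)\|_{\Hc^{n-1}_{-\tilde r}}^2$. Since $\|F\|_{L^2(0,T';H^n_x)}\leq\Lambda(T',M)$ and $\mathcal{E}(0)=0$, a Gronwall/continuation argument (using $\sqrt{t}$ factors from Cauchy–Schwarz in time exactly as in Proposition~\ref{lem-local}) yields a time $T\in(0,T')$, depending only on $M$ (through $\Lambda$), on which $\mathcal{E}(t)\lesssim\Lambda(T,M)^2$; this gives the second line of \eqref{est-Lij}.

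**From $\Hc^{n-1}_{-\tilde r}$ to $W^{p,\infty}_{x,v}$ and wrap-up.** The first estimate in \eqref{est-Lij} follows from the second by the weighted Sobolev embedding $\Hc^{n-1}_{-\tilde r}\hookrightarrow W^{p,\infty}_{x,v}$ valid for $p<n-1-d/2-\text{(weight loss)}$: differentiating $p$ times leaves $n-1-p$ derivatives, and if $n-1-p>d/2$ one gets $L^\infty$ control in $(x,v)$, with the weight $(1+|v|^2)^{-\tilde r/2}$ only helping (it is bounded), so indeed $p<n-1-d/2$ suffices. Uniqueness on $[0,T]$ follows from a standard contraction estimate for the difference of two solutions at the level of $\Hc^0_{-\tilde r}$ (or $\Hc^{n-1}_{-\tilde r}$), using the stability/Lipschitz structure of the nonlinearity and the already-established bounds — mechanically identical to the $h_k$ estimate in the proof of Proposition~\ref{lem-local}.

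**Main obstacle.** I expect the principal difficulty to be bookkeeping the weights and the degree of the polynomial nonlinearity so that all the product and commutator estimates of Lemma~\ref{lem-calculus} genuinely apply with the stated exponents — in particular ensuring that the quadratic terms can be estimated by $\Lambda(T,M)^2$ rather than something that degrades the index $n-1$ or the weight $-\tilde r$ in a way that breaks the closure of the energy inequality, and verifying that the cutoff/weight function $\chi(v)=(1+|v|^2)^{\pm\tilde r/2}$ satisfies the hypothesis $|\pa^\alpha\chi|\leq C_\alpha\chi$ needed in Lemma~\ref{lem-calculus}. The transport and Gronwall mechanics are routine given Proposition~\ref{lem-local}; the care is entirely in the algebra of the source terms of \eqref{eq-constraint}.
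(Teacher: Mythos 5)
Your overall strategy — Picard iteration on the semilinear transport system along characteristics of $\Tc$, weighted $L^2$ energy estimates, Gronwall on a short time interval with zero initial data — is indeed the correct framework; this is exactly what the paper means by ``solved as a semi-linear system of coupled kinetic transport equations'' (following Lemma~4.2 of \cite{HKR}). However, your last step, deducing the first line of \eqref{est-Lij} from the second by weighted Sobolev embedding, contains a genuine gap that also creates a circularity in the energy argument.

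The problem is twofold. First, the embedding threshold is wrong: $\varphi_{k,l}^{i,j}$ lives on $\T^d_x\times\R^d_v$, a $2d$-dimensional domain, so $H^{n-1}_{x,v}\hookrightarrow W^{p,\infty}_{x,v}$ requires $n-1-p>d$, not $n-1-p>d/2$ as you claim; the stated range $p<n-1-d/2$ cannot be reached this way. Second, and more importantly, the norm $\Hc^{n-1}_{-\r}$ with $\r>d/2$ carries a \emph{negative} weight $(1+|v|^2)^{-\r}$, so it only controls $(1+|v|^2)^{-\r/2}\pa^\gamma_{x,v}\varphi$ in $L^2$, and any Sobolev embedding applied to that quantity yields $|\pa^\gamma\varphi(x,v)|\lesssim(1+|v|^2)^{\r/2}$ — a polynomially growing bound, not a $W^{p,\infty}_{x,v}$ bound. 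Your parenthetical remark that the weight ``only helps (it is bounded)'' has it backwards. The two estimates of \eqref{est-Lij} are therefore genuinely independent, and moreover you \emph{need} the $W^{p,\infty}$ bound (with $p\geq(n-1)/2$, which is why $n>d+1$ matters) to invoke \eqref{com1} on the quadratic terms and close the $\Hc^{n-1}_{-\r}$ energy inequality: deriving the first from the second is circular.

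The correct route to the $W^{p,\infty}_{x,v}$ bound is direct propagation along characteristics of the \emph{linear} transport problem at each Picard step. The coefficients of the resulting ODE system are $\pa_v a$ (uniformly bounded with all derivatives by \eqref{deriv-a}), $A_j(v)$ and its derivatives (uniformly bounded by \eqref{deriv-A}), and $\pa_x F^j(t,x)$, which depends on $x$ only; since $F^j(t,\cdot)\in H^n(\T^d)$, the Sobolev embedding \emph{in $x$ alone} (dimension $d$, torus) gives $\pa_xF^j(t,\cdot)\in W^{p,\infty}_x$ for $p<n-1-d/2$. Differentiating the ODE system in $(x,v)$ up to order $p$ and applying Gronwall then yields the first line of \eqref{est-Lij} with exactly that range of $p$; no embedding on the full $(x,v)$ phase space is used, which is how the threshold $d/2$ (rather than $d$) arises. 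With that $W^{p,\infty}$ control in hand, your energy argument for the second line of \eqref{est-Lij} and the contraction for uniqueness go through as you sketched.
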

We will not reproduce the proof of Lemma~\ref{lem-Lij}, since it follows, \emph{mutatis mutandis}, that of Lemma 4.2 of \cite{HKR}: System~\eqref{eq-constraint} is solved as a semi-linear system of coupled kinetic transport equations. Note that we use the assumptions~\eqref{deriv-a} on $a$ and~\eqref{deriv-A} on $A$ to control the contribution of the additional linear and semi-linear terms that appear compared to Lemma 4.2 of \cite{HKR}.


  Introduce now the second order operators
    \begin{equation}
    \label{def-L}
    L_{i, j} := \pa^2_{x_i, x_j} + \sum_{1\leq  k,l \leq d} \left( \varphi_{k,l}^{i,j}\pa_{x_k} \pa_{v_l} + \psi_{k,l}^{i,j} \pa^2_{v_k, v_l} \right), \quad \forall i,j \in \{1,\cdots d\}.
    \end{equation}
We observe that by uniqueness of the solution of~\eqref{eq-constraint} and a symmetry argument, $L_{i,j} =L_{j,i}$.

One of the interests of the operators $L_{i,j}$ comes from the following lemma.

  \begin{lemma}
    \label{Lij}
     For all smooth functions $f$, we have the formula
        \begin{equation}
        \label{LT}
        \begin{aligned}
L_{i,j} \Tc(f) &= \Tc L_{i,j} (f) +  \left(\pa_{x_i,x_j}^2 F +  \sum_{  k,l } \varphi_{k,l}^{i,j}  \pa_{x_k} \pa_{v_l} F +  \psi_{k,l}^{i,j} \pa_{v_k,v_l}^2 F\right)\cdot \na_v f \\
&+ \sum_{  k,l } \psi_{k,l}^{i,j}  \pa_{v_k, v_l}^2 a(v)  \cdot \na_x f 
+ \sum_{k,l,m}\pa_{v_l}a(v)_m   \varphi^{i,j}_{k,l}  L_{k,m} f.
\end{aligned}
    \end{equation}
    
    \end{lemma}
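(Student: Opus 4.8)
The identity \eqref{LT} is an algebraic commutator computation: expand $L_{i,j}\Tc(f)$ using the Leibniz/product rule, expand $\Tc L_{i,j}(f)$ the same way, and compare. The coefficients $\varphi^{i,j}_{k,l},\psi^{i,j}_{k,l}$ have been \emph{defined} (in Lemma~\ref{lem-Lij}) precisely so that all the ``bad'' terms — those in which two derivatives fall on $f$ through a combination that is not one of the $L_{k,m}f$ — cancel. So the strategy is: write everything out, organize terms by the differential monomial in $f$ they carry, and verify term by term that what survives is exactly the right-hand side of \eqref{LT}.

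\textbf{First I would} compute $L_{i,j}\Tc(f)$. Write $\Tc f = \pa_t f + a(v)\cdot\na_x f + F\cdot\na_v f$, and apply $L_{i,j}=\pa^2_{x_i,x_j}+\sum_{k,l}(\varphi^{i,j}_{k,l}\pa_{x_k}\pa_{v_l}+\psi^{i,j}_{k,l}\pa^2_{v_k,v_l})$. Each of the three second-order pieces of $L_{i,j}$ hits each of the three pieces of $\Tc f$; using the product rule, one splits each contribution into (a) the term where both derivatives of $L_{i,j}$ hit $f$ (this produces $\Tc$ applied to a second-order derivative of $f$, up to lower-order corrections coming from the coefficients $a$ and $F$ not commuting with $\pa_x,\pa_v$), (b) the terms where one derivative hits a coefficient ($a$, $F$, or $\varphi,\psi$) and one hits $f$, and (c) the term where both derivatives hit a coefficient. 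Next I would compute $\Tc L_{i,j}(f)$ similarly: $\Tc$ applied to $\pa^2_{x_i,x_j}f$ gives $\pa^2_{x_i,x_j}\Tc f$ minus commutator terms $[\pa^2_{x_i,x_j}, a\cdot\na_x + F\cdot\na_v]f$; and $\Tc$ applied to $\varphi^{i,j}_{k,l}\pa_{x_k}\pa_{v_l}f$ produces $(\Tc\varphi^{i,j}_{k,l})\pa_{x_k}\pa_{v_l}f$ plus $\varphi^{i,j}_{k,l}\Tc(\pa_{x_k}\pa_{v_l}f)$ — and here is the crucial point — one uses the transport equations \eqref{eq-constraint} for $\Tc\varphi^{i,j}_{k,l}$ and $\Tc\psi^{i,j}_{k,l}$ to substitute. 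The design of \eqref{eq-constraint} is exactly that these substitutions generate the terms needed to cancel the ``one-derivative-on-coefficient, one-on-$f$'' terms from step (b) above, as well as the $[\pa^2_{x_i,x_j}, a\cdot\na_x+F\cdot\na_v]f$ commutator, leaving only: $\Tc L_{i,j}f$ on one side, and on the other the three explicit terms — $(\pa^2_{x_i,x_j}F + \sum\varphi^{i,j}_{k,l}\pa_{x_k}\pa_{v_l}F + \psi^{i,j}_{k,l}\pa^2_{v_k,v_l}F)\cdot\na_v f$ (the genuine ``loss of derivative'' term), $\sum\psi^{i,j}_{k,l}\pa^2_{v_k,v_l}a(v)\cdot\na_x f$ (coming from $a$ being nonconstant in $v$), and $\sum_{k,l,m}\pa_{v_l}a(v)_m\,\varphi^{i,j}_{k,l}\,L_{k,m}f$ (the self-coupling term that keeps the system closed, which is why the last line of the first equation in \eqref{eq-constraint} contains the $-\sum\pa_{v_{l'}}a(v)_m\varphi^{i,j}_{k',l'}\varphi^{k',m}_{k,l}$ term — it feeds back through $L_{k,m}$).

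\textbf{The main obstacle} is purely bookkeeping: there are many indices ($i,j,k,l$ plus summation indices $k',l',m$ from \eqref{eq-constraint}) and one must match, for each differential monomial $\pa^\alpha_x\pa^\beta_v f$ with $|\alpha|+|\beta|\le 2$, the coefficient coming out of $L_{i,j}\Tc f$ against the coefficient coming out of $\Tc L_{i,j}f$ plus the claimed right-hand side. I would organize the verification by the type of monomial: the $\pa^2_{x}$-type terms, the $\pa_x\pa_v$-type terms, the $\pa^2_v$-type terms, and separately the first-order $\pa_x f$ and $\pa_v f$ terms. For each type, the relevant line of \eqref{eq-constraint} supplies exactly the cancellation. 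The only genuinely nontrivial structural input, beyond patience, is recognizing that the terms $\sum_{k',l',m}\pa_{v_{l'}}a(v)_m\varphi^{i,j}_{k',l'}L_{k',m}f$ reassemble correctly: this requires noticing that when one expands $L_{k',m}f$ and collects, the quadratic-in-$\varphi$ terms in \eqref{eq-constraint} are precisely those needed. Since — as the authors note — this follows \emph{mutatis mutandis} from the analogous computation in \cite{HKR} (Lemma 4.2 and its companion), once the extra linear terms from $a$ being $v$-dependent and from the decomposition $F=\sum_j A_j(v)F^j(t,x)$ are tracked (controlled via \eqref{deriv-a} and \eqref{deriv-A}), the identity \eqref{LT} follows. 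I would therefore present the proof as: state the product-rule expansions of both sides, substitute \eqref{eq-constraint}, and collect terms, displaying the cancellation for one representative monomial type in detail and asserting the rest are identical in spirit.
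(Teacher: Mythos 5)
Your plan is correct and matches the paper's proof: the paper likewise expands $L_{i,j}\Tc(f)$ by the product rule, isolates $\Tc L_{i,j}(f)$ and the $\Tc\varphi^{i,j}_{k,l},\Tc\psi^{i,j}_{k,l}$ factors, rewrites the $\varphi^{i,j}_{k,l}\pa_{v_l}a(v)\cdot\na_x\pa_{x_k}f$ contribution via the definition of $L_{k,m}$ (producing the self-coupling term and the quadratic-in-$\varphi,\psi$ corrections), and then observes that the coefficients of $\pa_{x_k}\pa_{v_l}f$ and $\pa^2_{v_k,v_l}f$ vanish exactly by \eqref{eq-constraint}. You have correctly identified the one nontrivial structural point — that the quadratic terms in \eqref{eq-constraint} are precisely what is generated when expanding $L_{k,m}f$ — so the only thing separating your sketch from a complete proof is carrying out the index bookkeeping.
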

\begin{rem}
Formula~\eqref{LT} can also be written in a more synthetic form:
$$
L_{i,j} \Tc(f) = \Tc L_{i,j} (f) +  (L_{i,j} F) \cdot \na_v f  +(L_{i,j} a)  \cdot \na_x f 
+ \sum_{k,l,m}\pa_{v_l}a(v)_m   \varphi^{i,j}_{k,l}  L_{k,m} f.
$$
\end{rem}

 \begin{proof}[Proof of Lemma~\ref{Lij}]
    We have by direct computations
    \begin{align*}
    \pa_{x_i x_j}^2 (\Tc f) &= \Tc (  \pa_{x_i, x_j}^2 f) + \pa_{x_i, x_j}^2 F \cdot \na_v f 
    + \pa_{x_i} F \cdot \na_v \pa_{x_j} f 
    + \pa_{x_j} F \cdot \na_v \pa_{x_i} f, \\
    \varphi_{k,l}^{i,j}\pa_{x_k} \pa_{v_l} (\Tc f) &= \Tc ( \varphi_{k,l}^{i,j}\pa_{x_k} \pa_{v_l} f)      -\Tc  (\varphi_{k,l}^{i,j}) \pa_{x_k} \pa_{v_l} f \\
     &+ \varphi_{k,l}^{i,j} \Big(\pa_{v_l} a(v) \cdot \na_x \pa_{x_k }f \\
     &\qquad \qquad \qquad \qquad
     + \pa_{x_k} F  \cdot \na_v \pa_{v_l} f 
     + \pa_{v_l} F \cdot \na_v \pa_{x_k} f
     + \pa_{x_k} \pa_{v_l} F \cdot \na_v f\Big), \\
      \psi_{k,l}^{i,j} \pa_{v_k, v_l}^2 (\Tc f) &= \Tc ( \psi_{k,l}^{i,j} \pa_{v_k, v_l}^2 f)       -  \Tc (\psi_{k,l}^{i,j}) \pa_{v_k, v_l}^2 f \\
      &+  \psi_{k,l}^{i,j}\Big( \pa_{v_l} a(v) \cdot \na_x \pa_{v_k} f 
      + \pa_{v_k} a(v) \cdot \na_x  \pa_{v_l}  f + \pa_{v_k,v_l}^2 a(v) \cdot \na_v f\\
      &\qquad \qquad \qquad \qquad+ \pa_{v_k} F \cdot \na_v \pa_{v_l} f + \pa_{v_l} F \cdot \na_v \pa_{v_k} f + \pa_{v_k, v_l}^2 F \cdot \na_v  f \Big).
    \end{align*}
    We can rewrite
    \begin{multline*}
     \varphi_{k,l}^{i,j} \pa_{v_l} a(v) \cdot \na_x  \pa_{x_k}f  =   \varphi_{k,l}^{i,j} \sum_{m}  \pa_{v_l} a(v)_m  \pa_{x_m} \pa_{x_k}f  \\
     =    \varphi_{k,l}^{i,j}  \sum_{m} \pa_{v_l}a(v)_m  \left(  L_{k,m} f - \sum_{k',l'} \left( \varphi_{k',l'}^{k,m}\pa_{x_{k'}} \pa_{v_{l'}} + \psi_{k',l'}^{k,m} \pa_{v_{k'},v_{l'}}^2 \right)f \right),
    \end{multline*}
    which entails that 
    \begin{align*}
   L_{i,j} \Tc (f) &= \Tc  L_{i,j} (f) +  \pa_{x_i x_j} F \cdot \na_v f \\
   +&  \sum_{  k,l } \left( \varphi_{k,l}^{i,j}  \pa_{x_k} \pa_{v_l} F \cdot \na_v f + \psi_{k,l}^{i,j} \pa_{v_k,v_l}^2 F \cdot \na_v f + \psi_{k,l}^{i,j} \pa_{v_k,v_l}^2 a(v) \cdot \na_x f  \right) \\
   +& \sum_{k,l,m} \pa_{v_l}a(v)_m  \varphi^{i,j}_{k,l}  L_{k,m} f \\ 
   + &\sum_{k,l} \pa_{x_k} \pa_{v_l} f \Bigg[-\Tc  \varphi_{k,l}^{i,j} + \sum_{k'} \pa_{v_{k'}} a(v)_k \psi_{k',l}^{i,j} +\sum_{k'} \pa_{v_{k'}} a(v)_k   \psi_{l,k'}^{i,j}  \\
   &\qquad -\sum_{k',l',m} \pa_{v_{l'}}a(v)_m \varphi_{k',l'}^{i,j}   \varphi_{k,l}^{k',m}  + \delta_{k,j} \pa_{x_i}F_l + \delta_{k,i} \pa_{x_j}F_l  + \sum_{l'} \varphi_{k,l'}^{i,j}  \pa_{v_{l'}}  F_l  \Bigg] \\
   + &\sum_{k,l} \pa_{v_k, v_l}^2 f \Bigg[ -\Tc  \psi_{k,l}^{i,j} -   \sum_{k',l',m} \pa_{v_{l'}}a(v)_m \varphi_{k',l'}^{i,j}  \psi_{k,l}^{k',m}  +  \varphi_{k,l}^{i,j} \pa_{x_k} F_k \\
   &\qquad +   \sum_{k'}  \psi_{k',l}^{i,j}   \pa_{v_{k'}} F_k + \sum_{l'}  \psi_{k,l'}^{i,j}   \pa_{v_{l'}} F_l   \Bigg].
   \end{align*} 
   We therefore deduce \eqref{LT}, because $(\varphi_{k,l}^{i,j}, \psi_{k,l}^{i,j})$ solves~\eqref{eq-constraint}.
    \end{proof}

\subsection{Composition of the second order operators} Relying on Lemma~\ref{Lij}, we shall use the $L_{i,j}$ operators in order to apply derivatives to the solution $f$ of the Vlasov equation~\eqref{abstract}.

Set for $I,J \in \{1, \cdots, d\}^k$,
\begin{equation}
L^{I,J} := L_{i_1, j_1} \cdots L_{i_k, j_k}.
\end{equation}
Let us also introduce the following useful notation. Given $I=(i_1,\cdots,i_k)$ and $J=(j_1,\cdots,j_k)$, we set
\begin{equation}
\alpha(I,J) := (i_1, j_1, \cdots, i_k,j_k).
\end{equation}
and 
\begin{equation}
\pa^{\alpha(I,J)}_x = \pa_{x_{i_1}} \pa_{x_{j_1}} \cdots \pa_{x_{i_k}} \pa_{x_{j_k}}.
\end{equation}
Note that by construction, 
$$
L^{I,J}|_{t=0} = \pa^{\alpha(I,J)}_x.
$$
In what follows, $f$ will systematically stand for the solution of~\eqref{abstract}, starting from $f_0$ verifying the assumptions of Theorem~\ref{thm}.

\subsection{Moments in $v$}
\label{subsec-comp}
We study in this section the moments in $v$ of the $L^{I,J} f$.
Until the end of the section, the times $T>0$ will be such that
the solution to~\eqref{abstract} satisfies
$$
\| f\|_{L^\infty(0,T;\Hc^{2m-1}_r)} \leq 2 R_0,
$$
thanks to Proposition~\ref{lem-local}.
\begin{lemma}
\label{lem-momentsLij}
\begin{itemize}
\item 
Let $k=0,\cdots,p$ and $I,J \in \{1, \cdots, d\}^{m+k}$.
Assume that the force field satisfies $F^j \in L^2(0,T; H^{2(m+k)-1}_x)$ with norm bounded by $\Lambda(T,M)$.
Assume that for all $n=2m, \cdots, 2(m+k)-1$, for all $\varphi  \in {L^\infty(0,T; \mathcal{W}^{d+2+n-2m,\infty}_{-r_0})}$ such that $\|\varphi \|_{L^\infty(0,T; \mathcal{W}^{d+2+n-2m,\infty}_{-r_0})}\leq \Lambda(T,M)$, and all $|\a|=n$, we have
\begin{equation}
\label{assump-moments}
\left\| \int_{\R^d}  (\pa^\a_x f) \varphi(t,x,v) \,dv \right\|_{L^2(0,T;L^2_x)} \leq \Lambda(T,M).
\end{equation}
Let $\psi \in L^\infty(0,T; \mathcal{W}^{d+2+2k,\infty}_{-r_0})$ satisfying $\|\psi\|_{L^\infty(0,T; \mathcal{W}^{d+2+2k,\infty}_{-r_0})} \leq \Lambda(T,M)$.
We have
\begin{equation}
\label{eq-mom1}
\int_{\R^d}  L^{I,J} f \, \psi(t,x,v) \,dv =  \int_{\R^d}\pa^{\alpha(I,J)}_x f \, \psi(t,x,v) \,dv + \mathfrak{R}_{I,J,\psi},
\end{equation}
where $\mathfrak{R}_{I,J,\psi}$ is a remainder satisfying the estimate
\begin{equation}
\label{eq-mom2}
\| \mathfrak{R}_{I,J,\psi}\|_{L^2(0,T;L^2_x)} \leq \Lambda(T,M).
\end{equation}
\item  Let $k=0,\cdots,p-1$ and $I,J \in \{1, \cdots, d\}^{m+k}$.
Assume that the force field satisfies  $F^j \in L^2(0,T; H^{2(m+k)}_x)$ with norm bounded by $\Lambda(T,M)$.
Assume that for all $n=2m, \cdots, 2(m+k)$, for all $\varphi  \in {L^\infty(0,T; \mathcal{W}^{d+2+n-2m,\infty}_{-r_0})}$ such that $\|\varphi \|_{L^\infty(0,T; \mathcal{W}^{d+2+n-2m,\infty}_{-r_0})}\leq \Lambda(T,M)$, and all $|\a|=n$, we have
\begin{equation}
\label{assump-moments2}
\left\| \int_{\R^d}  (\pa^\a_x f) \varphi(t,x,v) \,dv \right\|_{L^2(0,T;L^2_x)} \leq \Lambda(T,M).
\end{equation}
Let $\psi \in L^\infty(0,T; \mathcal{W}^{d+3+2k,\infty}_{-r_0})$ satisfying $\|\psi\|_{L^\infty(0,T; \mathcal{W}^{d+3+2k,\infty}_{-r_0})} \leq \Lambda(T,M)$.
Let $\partial =\partial_{x_i}$ or $\partial_{v_i}$ for some $i \in \{1,\cdots,d\}$. 
We have
\begin{equation}
\int_{\R^d} \pa  L^{I,J} f \, \psi(t,x,v) \,dv =  \int_{\R^d}  \pa^{\alpha(I,J)}_x \pa f \, \psi(t,x,v) \,dv + \mathfrak{R}_{I,J,\psi},
\end{equation}
where $\mathfrak{R}_{I,J,\psi}$ is a remainder satisfying the estimate
\begin{equation}
\| \mathfrak{R}_{I,J,\psi}\|_{L^2(0,T;L^2_x)} \leq \Lambda(T,M).
\end{equation}
\end{itemize}
\end{lemma}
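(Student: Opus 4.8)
The plan is to prove both items by expanding $L^{I,J}f$ (resp.\ $\pa L^{I,J} f$) into elementary monomials via the definition~\eqref{def-L}, isolating the ``diagonal'' contribution $\pa^{\alpha(I,J)}_x f$ (resp.\ $\pa^{\alpha(I,J)}_x\pa f$), and estimating every other monomial after one or several integrations by parts in $v$. Write $L_{i,j}=\pa^2_{x_i,x_j}+B_{i,j}$, where $B_{i,j}=\sum_{k,l}(\varphi^{i,j}_{k,l}\pa_{x_k}\pa_{v_l}+\psi^{i,j}_{k,l}\pa^2_{v_k,v_l})$ has coefficients vanishing at $t=0$, and expand $L^{I,J}=\prod_{s=1}^{m+k}(\pa^2_{x_{i_s},x_{j_s}}+B_{i_s,j_s})$. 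After distributing and applying the Leibniz rule, $L^{I,J}f$ equals $\pa^{\alpha(I,J)}_x f$ plus a finite sum of terms of the schematic form $c\,\pa^\gamma_x\pa^\delta_v f$, where $c$ is a product of the coefficients $\varphi^{\cdot,\cdot}_{\cdot,\cdot},\psi^{\cdot,\cdot}_{\cdot,\cdot}$ and of their $x$- and $v$-derivatives, and $|\gamma|+|\delta|+q=2(m+k)$ with $q$ the total number of derivatives falling on the coefficients. The combinatorial point is that every monomial other than the diagonal one carries at least one $v$-derivative reaching $f$, i.e.\ $|\delta|\ge 1$: the rightmost surviving $B_{i_s,j_s}$-factor has only $\pa^2_x$ operators (with constant coefficients) to its right, so its $v$-derivative(s) must eventually act on $f$.

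Next, I integrate a generic monomial against $\psi$ and integrate by parts in $v$, obtaining $\int_{\R^d}\bigl(\pa^{\delta'}_v(c\,\psi)\bigr)\,\pa^\gamma_x\pa^{\delta-\delta'}_v f\,dv$, where the number $\delta'\le|\delta|$ of integrations by parts is chosen according to $|\gamma|$. If $|\gamma|\ge 2m$ — which forces $|\delta|\le 2k$ and $|\gamma|\le 2(m+k)-1$ — I take $\delta'=|\delta|$, so that the integral is a moment of the pure space derivative $\pa^\gamma_x f$ tested against $\pa^{\delta}_v(c\,\psi)$; hypothesis~\eqref{assump-moments} then applies with $n=|\gamma|$ once one checks that $\pa^{\delta}_v(c\,\psi)\in L^\infty(0,T;\mathcal{W}^{d+2+|\gamma|-2m,\infty}_{-r_0})$ with norm $\lesssim\Lambda(T,M)$. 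If instead $|\gamma|\le 2m-1$, I take $\delta'$ minimal with $|\gamma|+|\delta|-\delta'\le 2m-1$ (so $\delta'=0$ when the monomial already has low order); then $\pa^\gamma_x\pa^{\delta-\delta'}_v f$ is controlled by $\|f\|_{L^\infty(0,T;\Hc^{2m-1}_r)}\le 2R_0$ from Proposition~\ref{lem-local}, and the integral is bounded by Cauchy--Schwarz in $v$ after pulling out the weight $(1+|v|^2)^r$, provided $\pa^{\delta'}_v(c\,\psi)$ is controlled against the weight $(1+|v|^2)^{-r}$ in $v$.

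In both cases the substance is to estimate the coefficient-dependent factor $\pa^{\delta'}_v(c\,\psi)$, using the product estimates of Lemma~\ref{lem-calculus}, the assumed regularity of $\psi$, and the bounds~\eqref{est-Lij} on $\varphi^{i,j}_{k,l},\psi^{i,j}_{k,l}$. The bookkeeping is arranged so that the total number of derivatives ultimately demanded of each coefficient factor is at most $d+2+2k$ (and $d+3+2k$ for the second item), which is strictly below the threshold $2(m+k)-2-d/2$ (resp.\ $2(m+k)-1-d/2$) furnished by the $W^{p,\infty}$ part of~\eqref{est-Lij} precisely because $n\ge N$ forces $m>\tfrac34 d+2$; for the low-order monomials in which instead up to $2(m+k)-2$ derivatives land on the coefficients, one uses the $\Hc^{2(m+k)-2}_{-\r}$ part of~\eqref{est-Lij} to put the over-differentiated coefficient in a weighted $L^2$ space and the remaining (now very regular) factors, including the corresponding derivative of $f$, in $L^\infty$ by Sobolev embedding, via~\eqref{com1}--\eqref{comdual}. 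Since each non-diagonal monomial carries at least one coefficient factor, hence a factor bounded by $\Lambda(T,M)$, summing the finitely many monomials yields~\eqref{eq-mom1}--\eqref{eq-mom2}. The second item is handled identically: when $\pa=\pa_{x_i}$ the diagonal term $\pa^{\alpha(I,J)}_x\pa f$ is set aside as the announced main term while every other monomial still contains a $v$-derivative on $f$ or a derivative on a coefficient, now with at most one extra space derivative (so $|\gamma|$ may reach $2(m+k)$, which is covered by~\eqref{assump-moments2}); when $\pa=\pa_{v_i}$ the diagonal term $\pa^{\alpha(I,J)}_x\pa_{v_i}f=\pa_{v_i}\pa^{\alpha(I,J)}_x f$ is itself the announced main term and the remaining monomials carry one extra $v$-derivative, which is why $\psi$ is assumed one degree smoother.

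The main obstacle is exactly this regularity bookkeeping: for each of the finitely many monomials one must verify that the number of derivatives required of the coefficients and of the test function never exceeds what~\eqref{est-Lij}, the $\mathcal{W}^{\cdot,\infty}_{-r_0}$-assumptions, and the local theory provide, and this forces, monomial by monomial, a careful choice of the split between $L^\infty$ factors (obtained by Sobolev embedding from weighted $\Hc^s_{-\r}$ spaces) and $L^2$ factors. One must also track the $v$-weights throughout: the coefficients $\varphi^{i,j}_{k,l},\psi^{i,j}_{k,l}$ are only bounded, not decaying, in $v$, so all the $v$-decay comes from $\psi$, and the hypothesis $r>R$ — which in particular gives $r-r_0>d/2$ — is precisely what makes the Cauchy--Schwarz step converge and keeps $\pa^{\delta'}_v(c\,\psi)$ an admissible element of $L^\infty(0,T;\mathcal{W}^{\cdot,\infty}_{-r_0})$.
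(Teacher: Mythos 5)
Your proposal is correct and follows essentially the same route as the paper's proof: expand $L^{I,J}f$ (and $\partial L^{I,J}f$) into elementary monomials $c\,\partial_x^\gamma\partial_v^\delta f$, isolate the diagonal $\partial_x^{\alpha(I,J)}f$ term, observe that every other monomial carries at least one $v$-derivative on $f$, integrate by parts in $v$ as needed, and control each term by either the inductive moment assumption~\eqref{assump-moments} (when $|\gamma|\ge 2m$) or the local well-posedness bound $\|f\|_{\mathcal{H}^{2m-1}_r}$ (when fewer $x$-derivatives reach $f$), with Lemma~\ref{lem-Lij} and the product estimates of Lemma~\ref{lem-calculus} covering the coefficients. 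The only organizational difference is cosmetic: the paper keeps exactly one $v$-derivative distinguished in the expansion (so that $|e|=1$) and splits cases by $s$, the number of derivatives landing on coefficients, whereas you separate all $v$-derivatives at once and split cases by $|\gamma|$; the two bookkeepings are equivalent. Your sketch does compress the finer sub-case analysis that the paper carries out for the regime where $s\ge 2(m+k)-2-d/2$ (the careful splitting on the largest index $l$ with $k_l\neq 0$, deciding which coefficient factor goes in $\mathcal{H}^{2(m+k)-2}_{-\tilde r}$ and which in $L^\infty$), but you correctly identify that this is where the $\mathcal{H}$-part of~\eqref{est-Lij}, the dual estimate~\eqref{comdual}, and the Sobolev embedding are invoked, and your derivative count ($d+2+2k<2(m+k)-2-d/2$, enforced by $n\ge N$) matches the paper's.
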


This result will allow to set up an induction argument: indeed, with the assumption \eqref{assump-moments} (resp. \eqref{assump-moments2}) that corresponds to regularity of the moments up to order $2(m+k)-1$ (resp. $2(m+k)$), this will imply that controlling the moments of the $(L^{I,J} f)$ gives information on the regularity of the moments up to order $2(m+k)$  (resp. $2(m+k)+1$).

\begin{proof}[Proof of Lemma~\ref{lem-momentsLij}]
Let us focus only on the first item (the proof of the second one is completely similar).
Let $\psi \in L^\infty(0,T; \mathcal{W}^{d+2+2k,\infty}_{-r_0})$.
The beginning of the proof closely follows that of Lemma 4.3 of \cite{HKR}.
  At first, we can expand $f_{I,J}=  L^{I,J} f$ in a more tractable form.
        Let us  set for readability 
        $$U:= ( \varphi^{i_{\alpha}, j_{\beta}}_{k',l}, \psi^{i_{\alpha},j_{\beta}}_{k',l})_{1\leq k',l\leq d, \, 1 \leq \alpha, \beta \leq  m+k}.$$
     Then, by induction, we obtain
     \begin{align} f_{I,J}
     \label{Lmexp}
     & = \partial_{x}^{\alpha (I,J)} f + \sum_{s=0}^{2(m+k)-2} \sum_{e, \, \alpha, \, k_{0},  \cdots, k_{s}}  P_{s, e,\alpha}^{k_{0}}(U) P_{s,e, \alpha}^{k_{1}}(\partial U) \cdots P_{s,e,\alpha}^{k_{s}}(\partial^{s} U) \partial_{v}^e \partial^{\alpha} f 
     \\ \nonumber &  =:  \partial_{x}^{\alpha (I,J)} f  + 
       \sum_{s=0}^{2(m+k)-2} \sum_{e, \, \alpha, \, k_{0},\cdots, k_{s}} \mathcal{R}_{s,e, \alpha}^{k_{0},\cdots, k_{s}},
       \end{align}
     where  the sum is  taken on indices such that
      \begin{equation}
      \label{indices}
       |e|= 1, \, | \alpha |= 2(m+k) - 1-s, \, k_{0} + k_{1}+ \cdots k_{s} \leq m+k , \, k_{0} \geq 1, \, k_{1} + 2 k_{2}+ \cdots s k_{s}= s,
      \end{equation}
       and  for all $0\leq p \leq s$,  $ P_{s,e, \alpha}^{k_{p}}(X)$ is a  polynomial of degree smaller than   $k_{p}$ (we denote by $\partial^{k} U$ the vector made of all the partial derivatives of length $k$ of all components of $U$). 
                We can set
         $$ \mathfrak{R}_{I,J,\psi}=\int_{\mathbb{R}^d} \psi(\cdot, v)  \sum_{s=0}^{2(m+k)-2} \sum_{e, \, \alpha, \, k_{0},\cdots, k_{s}} \mathcal{R}_{s,e, \alpha}^{k_{0},\cdots, k_{s}}\, dv,$$
       so that we  have to estimate $\int_{\mathbb{R}^d}  \psi \mathcal{R}_{s,e, \alpha}^{ k_{0},\cdots, k_{s}} \, dv$.
        All the following estimates are uniform in time  for $t \in [0,T]$ 
        and we shall dismiss 
         the time parameter to ease readability.
        
               We begin by estimating the  terms for which  $s \geq 2k+1$. Note that for all these  terms the total number of derivatives applied to $f$ is at most  $2m-1$.

\medskip
            
\noindent $\bullet$  When $ s< 2(m+k)-{ d \over 2} -2$, we can use estimate~\eqref{est-Lij} in Lemma~\ref{lem-Lij}  to obtain that
             $$ \|P_{s, e,\alpha}^{k_{0}}(U) P_{s,e, \alpha}^{k_{1}}(\partial U) \cdots P_{s,e,\alpha}^{k_{s}}(\partial^{s} U)\|_{L^\infty_{x,v}} \leq \Lambda (T,M)$$
              and hence using that 
              $$
              \sup_v | (1+ |v|^2)^{-r_0/2} \psi (\cdot , v) | \leq \Lambda(T,M) 
              $$
              we obtain by Cauchy-Schwarz that since $r> r_0 + r'$, for some $r'>d/2$, we have
              $$  
                \begin{aligned}\left\| \int  \mathcal{R}_{e,s, \alpha}^{k_{0},  \cdots , k_{s}} \, dv\right\|_{L^2_{x}} &\leq  \left \| \|{ (1 + |v|^2)^{-r_0 - r' \over 2 } } \psi  \|_{L^2_{v}} \|  (1 + |v|^2)^{(r_0+r')\over 2}\partial_{v}^e \partial^\alpha f \|_{L^2_v} \right\| \\
                &\leq  \Lambda (T, M) \left(  \int_{\R^d} \frac{dv}{(1+|v|^2)^{r'}} \right)^{1/2} \|f\|_{\Hc^{2m-1}_{r}}  \\&\leq  \Lambda (T, M).
                \end{aligned}$$
          
     \medskip
            
\noindent $\bullet$  Let us now consider  $s \geq 2(m+k)- 2 - {d \over 2}.$
             We start with  the case  where in the sequence $(k_{1}, \cdots, k_{s})$, the  largest index $l$ such that $k_{l} \neq 0$ and $k_{p}$ = 0 for every $p >l$
              is such that $l >s/2$. In this case, since $l k_l \leq s$ has to hold, we necessarily have $k_{l}= 1$. Moreover, for the indices $p<l$ such that $k_{p} \neq 0$, we must have $p\leq p k_p <s/2$.
                Thus, we can use estimate~\eqref{est-Lij} in Lemma~\ref{lem-Lij}  to bound $\| \partial^{p} U \|_{L^\infty_{x,v}}$ provided
                 $s/2 \leq 2(m+k) - {d \over 2} - 2$. Since $s\leq 2(m+k)-2$, this is verified thanks 
                 to the assumption that $ 2 m >2+d$.
               We thus obtain   that
               $$   \left\| \int  \mathcal{R}_{e,s, \alpha}^{k_{0}, \cdots k_{s}} \, dv\right\|_{L^2_{x}} \leq \Lambda (T,M) \left  \| \int \psi \partial^{{l}} U
                \partial_{v}^e  \partial^\alpha f \, dv \right \|_{L^2_{x}}.
                $$
                 Next, we can use the fact that 
               \begin{align*}
                 \left  \| \int  \psi \partial^{{l}} U
                \partial_{v}^e  \partial^\alpha f \right \|_{L^2_{x}}  & \lesssim \Lambda(T,M)  \left \| \|{   (1 + |v|^2)^{-r \over 2 } } \partial^{{l}} U \|_{L^2_{v}} \|  (1 + |v|^2)^{(r_0+r)\over 2}\partial_{v}^e \partial^\alpha f \|_{L^2_{v}} \right \|_{L^2_{x}} \\
                &   \lesssim \Lambda(T,M)  \|  U \|_{\Hc_{-r}^{2m-2}} \sup_{x} \|   (1 + |v|^2)^{r \over 2} \partial^e_v \partial^\alpha f \|_{L^2_{v}}.
                \end{align*}
                  By Sobolev embedding in $x$, we have
                  $$   \sup_{x} \|  (1 + |v|^2)^{r \over 2}\partial^e_v \partial^\alpha f \|_{L^2_{v}} \lesssim \|f\|_{\Hc^{2m-1}_{r}}$$
                   as soon as  $2m-1> 1 + | \alpha | + {d \over 2}=  1 + 2(m+k)-1 - s + {d \over 2}$ which  is equivalent to 
                    $s > 1 +2k + {d \over 2}$. Since we are in the case where $s    \geq 2(m+k)- 2 - {d \over 2}$, the condition is matched, thanks to the assumption $ 2 m >3+d$.
                     Consequently, by using estimate~\eqref{est-Lij} in Lemma~\ref{lem-Lij}, we obtain again  that
               $$  \left\| \int  \mathcal{R}_{e,s, \alpha}^{k_{0}, \cdots k_{s}} \, dv\right\|_{L^2_{x}} \lesssim \Lambda(T, M).$$
                Finally, it remains to handle the case where $k_{l}=0$ for every $l>s/2$. 
                 As above, we necessarily have  ${s / 2} < 2 (m+k) - {d \over 2} - 2$ 
                  and hence by using again estimate~\eqref{est-Lij} in Lemma~\ref{lem-Lij}, we  find
                  $$ \|\partial^l U \|_{L^\infty_{x,v}} \leq \Lambda (T, M), \quad l \leq s/2.$$
                   We deduce
                   $$   \left\| \int  \mathcal{R}_{e,s, \alpha}^{k_{0}, \cdots k_{s}} \, dv\right\|_{L^2_{x}} 
                    \leq \Lambda (T, M)  \|f \|_{\Hc^{2m-1}_{r}} \leq \Lambda(T, M).$$

\medskip

    It remains to treat the cases corresponding to $s\leq 2k$, that is to say
          $\mathcal{R}_{e,s, \alpha}^{ k_{0},\cdots k_{s}}$ contains the maximal number of derivatives applied to $f$. This means that
           $|\alpha |= 2m-1, \cdots, 2(m+k)-1$ so that at least $2m$ derivatives of $f$ are involved.
          We denote for readability the associated coefficient 
          $$\Gamma := \psi P_{s, e,\alpha}^{k_{0}}(U) P_{s,e, \alpha}^{k_{1}}(\partial U) \cdots P_{s,e,\alpha}^{k_{s}}(\partial^{s} U),$$
          and we have to study the $L^2_x$ norm of $\int \Gamma \pa^e_v \pa^\a f \, dv$. 
         
       First, assume that $|\alpha| \leq 2(m+k)-2$ (which corresponds to $s\geq1$). We note that for all $s'=0, \cdots, 2(m+k)-1-|\a|$, 
       we have by Lemma~\ref{lem-Lij} that 
       $$
      \| \pa^{s'} U \|_{W^{k, \infty}_{x,v}} \leq \Lambda(T,M), \quad \forall k< 2(m+k)-2 -d/2-s'.
       $$
       Since $s'\leq 2(m+k)-1-|\a|$, $2(m+k)-2 -d/2-s' \geq |\a| -d/2-1 > d+2+|\a|+1-2m$ since 
       $2m > 3d/2 +4$. Therefore 
       $$\| \pa^{s'} U \|_{W^{d+2+|\a|+1-2m, \infty}_{x,v}}  \leq \Lambda(T,M)$$ 
       and  
       $$\| \Gamma \|_{ \mathcal{W}^{d+2+|\a|+1-2m,\infty}_{-r_0}} \leq \Lambda(T,M).$$
       We can thus use the assumption~\eqref{assump-moments} to obtain the bound
       \begin{equation}
       \label{eq-Gamma}
       \left\| \int \Gamma \pa^e_v \pa^\a f \, dv \right\|_{L^2_x} \leq \Lambda(T,M).
       \end{equation}
       
                    Assume finally that $|\alpha| = 2(m+k)-1$ (which corresponds to $s=0$), that is to say $2(m+k)$ derivatives of $f$ are involved.  We can write, by integration by parts in $v$ (relying on the fast decay of $f$ and its derivatives at infinity)
\begin{align*}
\int_{\R^d} \Gamma \pa^e_v \pa^\a  f \, dv=- \int_{\R^d} \pa^{e}_v \Gamma  \pa^{\a}   f \, dv.
\end{align*}
We have that 
       $$\| \pa^{e}_v\Gamma \|_{ \mathcal{W}^{d+1+2k,\infty}_{-r_0}} \leq \Lambda(T,M),$$
 and we can use again~\eqref{assump-moments} to obtain       \begin{equation*}
       \left\| \int \pa^{e}_v \Gamma  \pa^\a f \, dv \right\|_{L^2_x} \leq \Lambda(T,M).
       \end{equation*}
       In summary, we have proved that
       $$
       \left\| \mathfrak{R}_{I,J,\psi} \right\|_{L^2_x} \leq \Lambda(T,M).
       $$
  This ends the proof of the lemma.

\end{proof}

\subsection{The equation satisfied by $L^{I,J}f$}
Using the algebraic identities of Lemma \ref{Lij}, we obtain
\begin{lemma}
\label{even}
For all $k=0, \cdots, p$, the following holds. Assume that $(F^j) \in L^2(0,T; H^{2(m+k)-1}_x)$ with norm bounded by $\Lambda(T,M)$. For
  all $I,J \in \{1, \cdots, d\}^{m+k}$, we have
\begin{equation}
\label{even-eq}
\begin{aligned}
&\Tc ( L^{I,J} f) +   \pa^{\alpha(I,J)}_x F \cdot \na_v f =
\\ 
&\qquad \sum_{r=m-k}^{m+k}\sum_{K, L \in \{1,\cdots,d\}^r} \sum_{|\alpha| +|\beta| = m + k - r} \gamma^{I,J}_{K,L,\alpha,\beta} L^{K,L} \pa^\alpha_x \pa^\beta_v  f + R_{I,J}
\end{aligned}
\end{equation}
where 
\begin{itemize}
\item $\gamma^{I,J}_{K,L,\alpha,\beta}$ are coefficients satisfying
\begin{equation}
\label{coef1}
\|  \gamma^{I,J}_{K,L,\alpha,\beta} \|_{L^2(0,T;W^{d+2, \infty}_{x,v})}  \lesssim  \Lambda(T,M),
\end{equation}
\item $R_{I,J}$ is a remainder satisfying 
$$\| R_{I,J} \|_{L^\infty(0,T; \Hc^{0}_{\r})} \lesssim \Lambda(T,M), \qquad \forall \r\leq r-d/2.$$
\end{itemize}
\end{lemma}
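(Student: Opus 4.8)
The plan is to prove the identity by induction on the length $m+k$ of the composition $L^{I,J}$, the case $k=0$ (length $m$) being the first genuine instance; it is convenient to prove simultaneously, for every $1\le N\le m+p$ and every $I,J\in\{1,\dots,d\}^N$, the analogous formula
\[
\Tc(L^{I,J}f)+\pa^{\alpha(I,J)}_x F\cdot\na_v f=\sum_{r=\max(1,2m-N)}^{N}\ \sum_{K,L\in\{1,\dots,d\}^r}\ \sum_{|\alpha|+|\beta|=N-r}\gamma^{I,J}_{K,L,\alpha,\beta}\,L^{K,L}\pa^\alpha_x\pa^\beta_v f+R_{I,J},
\]
so that the lemma is the case $N=m+k$ (the sum being empty when $N<m$). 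For $N=1$ this is just Lemma~\ref{Lij} applied to $f$, together with $\Tc f=0$: one reads off $\Tc L_{i,j}f+\pa^2_{x_i,x_j}F\cdot\na_v f=-\bigl(\varphi^{i,j}_{k,l}\pa_{x_k}\pa_{v_l}F+\psi^{i,j}_{k,l}\pa^2_{v_k,v_l}F\bigr)\cdot\na_v f-\psi^{i,j}_{k,l}\pa^2_{v_k,v_l}a\cdot\na_x f-\sum_{k,l,m'}\pa_{v_l}a(v)_{m'}\varphi^{i,j}_{k,l}L_{k,m'}f$, all of whose terms are harmless (at most one $x$-derivative on $F$, at most two derivatives on $f$, coefficients controlled by~\eqref{est-Lij}), so $R_{i,j}$ is bounded in $L^\infty(0,T;\Hc^0_\r)$.

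For the inductive step one writes $L^{I,J}=L_{i_1,j_1}L^{I',J'}$ with $I',J'\in\{1,\dots,d\}^{N-1}$, applies Lemma~\ref{Lij} to the function $g:=L^{I',J'}f$, and substitutes the inductive formula for $\Tc g$. Applying $L_{i_1,j_1}$ to the right-hand side of that formula and using $\pa^{\alpha(I,J)}_x=\pa_{x_{i_1}}\pa_{x_{j_1}}\pa^{\alpha(I',J')}_x$, the leading part $\pa^2_{x_{i_1},x_{j_1}}$ acting on $-\pa^{\alpha(I',J')}_x F\cdot\na_v f$ with both derivatives falling on $F$ produces exactly $-\pa^{\alpha(I,J)}_x F\cdot\na_v f$, which is moved to the left. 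The remaining terms reorganise into the advertised shape: the terms $-(L_{i_1,j_1}F)\cdot\na_v g$ and $-(L_{i_1,j_1}a)\cdot\na_x g$ from Lemma~\ref{Lij} are, using the crucial fact that $\varphi^{i,j}_{k,l},\psi^{i,j}_{k,l}$ depend only on $(t,x)$ (so $\pa_v$ commutes with every $L^{K,L}$), of the form $(\text{coefficient})\cdot L^{I',J'}\pa^\beta_v f$ or $(\text{coefficient})\cdot L^{I',J'}\pa^\alpha_x f$, i.e. $L^{K,L}\pa^\alpha_x\pa^\beta_v f$ with $|K|=|L|=N-1$; the term $-\sum\pa_{v_l}a(v)_{m'}\varphi^{i_1,j_1}_{k',l}L_{k',m'}g$ equals $-\sum\pa_{v_l}a(v)_{m'}\varphi^{i_1,j_1}_{k',l}L^{(k',I'),(m',J')}f$, a length-$N$ composition; and $L_{i_1,j_1}$ applied to the inductive sum yields, after $L_{i_1,j_1}L^{K,L}=L^{(i_1,K),(j_1,L)}+(\text{lower order in the }L\text{-count})$ and Leibniz, further terms of the same form whose coefficients are products of the $\gamma^{I',J'}$'s, derivatives of the $\varphi,\psi$'s and derivatives of $F$, controlled in $L^2(0,T;W^{d+2,\infty}_{x,v})$ by~\eqref{est-Lij}, \eqref{deriv-a}, \eqref{deriv-A} and the bound on $\|F^j\|_{L^2(0,T;H^{2(m+k)-1}_x)}$, giving~\eqref{coef1}. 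All the genuinely lower-order contributions — where $\pa^2_{x_{i_1},x_{j_1}}$ or the $\varphi\pa_x\pa_v,\psi\pa^2_v$ parts of $L_{i_1,j_1}$ hit $-\pa^{\alpha(I',J')}_xF\cdot\na_v f$ with at least one derivative on $f$, plus $L_{i_1,j_1}R_{I',J'}$ — are collected into $R_{I,J}$.

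The main obstacle is the control of $R_{I,J}$ in $\Hc^0_\r$, $\r\le r-d/2$. One expands it into finitely many terms of generic shape $(\text{derivatives of }\varphi,\psi)\cdot(\pa^\mu_x F)\cdot(\pa^\alpha_x\pa^\beta_v f)$ and estimates each via the product inequalities of Lemma~\ref{lem-calculus}. Two points require care. First, the coefficients $\varphi,\psi$ are only bounded in $W^{p,\infty}_{x,v}$ for $p<2m-2-d/2$ — they are \emph{not} smooth — so one must check that at no stage do more than that many derivatives fall on them; this is exactly where the standing assumptions $n\ge N$ (equivalently $2m$ large compared to $d$) are used, just as in the bookkeeping of Lemma~\ref{lem-momentsLij}. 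Second, the factors $\pa^\mu_x F=\sum_j A_j(v)\,\pa^\mu_x F^j$ carry up to $2(m+k)-1$ $x$-derivatives; these are absorbed using either $\|\pa^\mu_x F^j\|_{L^2_x}$ (from $F^j\in L^2(0,T;H^{2(m+k)-1}_x)$) or, when $|\mu|$ is small enough for a Sobolev embedding, $\|\pa^\mu_x F^j\|_{L^\infty_x}\lesssim\|F^j\|_{H^{2m-1}_x}$, against the $\sup_x\|\cdot\|_{L^2_v}$ norm of the relevant derivatives of $f$, which never exceed order $2m-1$ and are thus $\Hc^{2m-1}_r$-controlled. The clash between the positive weight $(1+|v|^2)^{r/2}$ on $f$ and the negative weights on the coefficients (from~\eqref{est-Lij}) is absorbed by a Cauchy–Schwarz with an integrable factor $(1+|v|^2)^{-r'}$, $r'>d/2$, which forces the loss $\r\le r-d/2$. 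The delicate point, and the place where the induction must be set up cleanly, is to decide for the borderline terms carrying the maximal number $2(m+k)-1$ of $x$-derivatives on $F$ whether they belong to $R_{I,J}$ or, as in \cite{HKR}, to the main sum with that derivative of $F$ acting as a coefficient; once this is settled the same scheme of estimates, applied \emph{mutatis mutandis}, also yields Lemmas~\ref{odd}, \ref{even'} and~\ref{odd'}.
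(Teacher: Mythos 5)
Your proposal has the right spirit — peel off one $L_{i_1,j_1}$ at a time, invoke Lemma~\ref{Lij} for $g=L^{I',J'}f$, and sort the resulting terms by how many derivatives reach $f$ — and the resulting identity is just the paper's formula $\Tc(L^{I,J}f)=-\sum_i F_i$ of~\eqref{F1def}--\eqref{F4def}, unrolled one step at a time rather than all at once. However, there is a genuine error at the heart of the reorganization step: the ``crucial fact'' that $\varphi^{i,j}_{k,l},\psi^{i,j}_{k,l}$ depend only on $(t,x)$ is false. They solve the kinetic transport system~\eqref{eq-constraint}, in which the transport operator $\Tc=\pa_t+a(v)\cdot\na_x+F\cdot\na_v$ has $v$-dependent advection $a(v)\cdot\na_x$, and the source terms involve $v$ through $a$ and $F$; the solutions therefore genuinely depend on $v$. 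This is exactly why the estimates~\eqref{est-Lij} are stated in $W^{p,\infty}_{x,v}$ and $\Hc^{n-1}_{-\r}$, both $(x,v)$-spaces, and why Lemma~\ref{lem-momentsLij} treats the coefficients $U$ as functions of $(x,v)$; the notation $\varphi(t,x)$ in the introductory overview is an informal abuse in the $d=1$ sketch. Consequently $\pa_v$ does \emph{not} commute with $L^{K,L}$, so rewriting $(L_{i_1,j_1}F)\cdot\na_v L^{I',J'}f$ as a coefficient times $L^{I',J'}\na_v f$ silently drops the commutator $[\na_v,L^{I',J'}]f$, which still carries up to $2(N-1)$ derivatives of $f$ and is not addressed.

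There is a related gap when $L_{i_1,j_1}$ hits the inductive sum: Leibniz produces terms where derivatives fall on $\gamma^{I',J'}_{K,L,\alpha,\beta}$ or on the coefficients of $L^{K,L}$, leaving derivatives on the \emph{outside} of the $L$'s; rewriting these back into the required shape $L^{K',L'}\pa^{\alpha'}_x\pa^{\beta'}_v f$ with the derivatives on the inside again requires commutation, and your proposal treats it as automatic. The paper's proof resolves both issues at once through the explicit expansion~\eqref{Lpexp}, the technical Lemma~\ref{JseK}, and the counting observation recorded around~\eqref{commu-diff}: in the expansion, any term carrying at least $2m$ derivatives of $f$ must already be of the form $L^{K,L}\pa^\alpha_x\pa^\beta_v f$, \emph{because if even a single derivative falls on a coefficient of one of the $L_{i,j}$'s then fewer than $2m$ derivatives reach $f$} (and such terms are absorbed into $R_{I,J}$). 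It is this observation, not the claimed $v$-independence of the coefficients, that makes the reorganization legitimate, and it is precisely what your argument is missing.
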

We mention that a version of this lemma was proved in \cite{HKR} in the case $k=0$. 

Lemma~\ref{even} will be useful in the induction argument to treat the case of  even integers.
For what concerns odd integers, we have the following result.
\begin{lemma}
\label{odd}
For all $k=0, \cdots, p-1$, the following holds.
Assume that $(F^j) \in L^2(0,T; H^{2(m+k)}_x)$ with norm bounded by $\Lambda(T,M)$. For all $I,J \in \{1, \cdots, d\}^{m+k}$, and $i=1,\cdots,d$,
 we have
\begin{equation}
\begin{aligned}
&\Tc ( L^{I,J}\pa_{x_i} f) +   \pa_{x_i} \pa^{\alpha(I,J)}_x   F \cdot \na_v f =
\\ 
&\qquad \sum_{r=m-k-1}^{m+k}\sum_{K, L \in \{1,\cdots,d\}^r} \sum_{|\alpha| +|\beta| = m + k +1 - r} \gamma^{x_i,I,J}_{K,L,\alpha,\beta} L^{K,L} \pa^\alpha_x \pa^\beta_v  f + R_{x_i, I,J}
\end{aligned}
\end{equation}
and 
\begin{equation}
\begin{aligned}
&\Tc (L^{I,J}  \pa_{v_i}  f) +    \pa_{v_i} \pa^{\alpha(I,J)}_x  F \cdot \na_v f =
\\ 
&\qquad \sum_{r=m-k-1}^{m+k}\sum_{K, L \in \{1,\cdots,d\}^r} \sum_{|\alpha| +|\beta| = m + k +1 - r} \gamma^{v_i,I,J}_{K,L,\alpha,\beta} L^{K,L} \pa^\alpha_x \pa^\beta_v  f + R_{v_i, I,J}
\end{aligned}
\end{equation}
where 
\begin{itemize}
\item $\gamma^{x_i I,J}_{K,L,\alpha,\beta}$, $\gamma^{v_i I,J}_{K,L,\alpha,\beta}$ are coefficients satisfying
\begin{equation}
\label{coef2}
\|  \gamma^{x_i,I,J}_{K,L,\alpha,\beta},\gamma^{v_i,I,J}_{K,L,\alpha,\beta} \|_{L^2(0,T; W^{d+2, \infty}_{x,v})} \lesssim \Lambda(T,M), 
\end{equation}
\item $R_{x_i, I,J}$, $R_{v_i, I,J}$ are remainders satisfying 
$$\| R_{x_i, I,J} \|_{L^\infty(0,T; \Hc^0_{\r})}  + \| R_{v_i, I,J} \|_{L^\infty(0,T; \Hc^0_{\r})} \lesssim \Lambda(T,M), \qquad \forall \r\leq r-d/2.$$
\end{itemize}
\end{lemma}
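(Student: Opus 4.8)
The plan is to prove Lemma~\ref{odd} by the same mechanism used for Lemma~\ref{even}, with the extra first-order derivative $\pa_{x_i}$ or $\pa_{v_i}$ carried along through the computation. First I would write $\Tc(L^{I,J}\pa_{x_i}f)$ and $\Tc(L^{I,J}\pa_{v_i}f)$ by combining the commutator $[\Tc,\pa_{x_i}]f = \pa_{x_i}F\cdot\na_v f$ (and $[\Tc,\pa_{v_i}]f = \pa_{v_i}a(v)\cdot\na_x f + \pa_{v_i}F\cdot\na_v f$, using $\na_v\cdot F=0$) with the iterated commutation formula \eqref{LT} of Lemma~\ref{Lij}. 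Concretely, since $\pa_{x_i}f$ and $\pa_{v_i}f$ solve transport equations with explicit source terms, applying $L^{I,J}$ and using \eqref{LT} once for each of the $m+k$ factors $L_{i_s,j_s}$ yields
\begin{equation*}
\Tc(L^{I,J}\pa_{x_i}f) = -L^{I,J}\left(\pa_{x_i}F\cdot\na_v f\right) + \sum_{s}L^{I,J,(s)}\!\left[(L_{i_s,j_s}F)\cdot\na_v + (L_{i_s,j_s}a)\cdot\na_x + \textstyle\sum\pa_{v_l}a_m\varphi^{i_s,j_s}_{k,l}L_{k,m}\right]\pa_{x_i}f,
\end{equation*}
where $L^{I,J,(s)}$ denotes the product of the other factors. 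The leading term on the left is extracted by writing $L^{I,J}(\pa_{x_i}F\cdot\na_v f) = \pa^{\alpha(I,J)}_x\pa_{x_i}F\cdot\na_v f + (\text{lower order in }F)$, which is exactly where the coefficient $\pa_{x_i}\pa^{\alpha(I,J)}_x F\cdot\na_v f$ in the statement comes from; everything else must be sorted into the structured sum $\sum\gamma^{x_i,I,J}_{K,L,\alpha,\beta}L^{K,L}\pa^\alpha_x\pa^\beta_v f$ plus the remainder $R_{x_i,I,J}$.

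Next I would carry out the bookkeeping that classifies each term produced above. The key point, already used in the proof of Lemma~\ref{even}, is that $L_{i_s,j_s}F$, $L_{i_s,j_s}a$, and the $\varphi^{i_s,j_s}_{k,l}$ are all ``harmless'' coefficients: by \eqref{deriv-a}, \eqref{deriv-A} and the hypothesis $(F^j)\in L^2(0,T;H^{2(m+k)}_x)$ together with the estimates \eqref{est-Lij} on $(\varphi,\psi)$, one checks that $L_{i_s,j_s}F$ contributes at most two $x$-derivatives on $F^j$ (hence stays in $L^2_t W^{d+2,\infty}_{x,v}$ after multiplying by the $A_j(v)$ and using product estimate \eqref{com1}), and that $\varphi^{i_s,j_s}_{k,l}$ lies in $L^\infty_t W^{p,\infty}_{x,v}$ for $p<n-1-d/2$, which covers $d+2$ since $n=2(m+k)\ge 2m > 3d/2+4$. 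Terms where the differential operator acting on $f$ has $x$-order at least $m-k-1$ (after the diffeomorphism between $\pa^{\alpha(I,J)}_x$-type objects and products of $L_{i,j}$'s, exactly as in \eqref{Lmexp}) are absorbed into the main sum with the stated range $r=m-k-1,\dots,m+k$ and $|\alpha|+|\beta| = m+k+1-r$; terms of strictly lower $x$-order involve at most $2(m-k-1)-1 \le 2m-3$ total derivatives on $f$ and therefore, by Sobolev embedding and the local bound $\|f\|_{L^\infty(0,T;\Hc^{2m-1}_r)}\le 2R_0$ together with the weight loss $\r\le r-d/2$, fall into the remainder $R_{x_i,I,J}$ with the claimed $L^\infty_t\Hc^0_\r$ bound. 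The $\pa_{v_i}$ case is handled identically, the only difference being the additional source term $\pa_{v_i}a(v)\cdot\na_x f$; since $\pa_{v_i}a(v)$ is bounded in $L^\infty$ with all derivatives by \eqref{deriv-a}, applying $L^{I,J}$ to $\pa_{v_i}a(v)\cdot\na_x f$ produces only terms of the same structure, with the lead term giving an $L^{K,L}\pa_x^\alpha\pa_v^\beta f$ contribution of total $x$-order $m+k+1$, consistent with the index $r=m+k$, $|\alpha|+|\beta|=1$ in the sum.

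The remaining step is to justify that the ``formal'' manipulations are rigorous given the limited smoothness available, which is where the real work lies. As the authors warn in the discussion of Section~\ref{sec-DO}, the delicate point is \emph{not} the algebra but making sure every coefficient that multiplies $f$ or one of its derivatives is controlled in the right norm. Here the main obstacle is the interplay between how many derivatives land on the $(\varphi,\psi)$-coefficients (which are only $\Hc^{n-1}_{-\r}$ and $W^{p,\infty}$ for $p<n-1-d/2$, by \eqref{est-Lij}) versus how many land on $f$ (only $\Hc^{2m-1}_r$): in the worst case term, where $2(m+k)$ full derivatives sit on $f$, one must check that the companion coefficient only ever needs $\le d+2$ derivatives, which forces the restriction $m>p+2$ (equivalently $2m>3d/2+4$ after accounting for $d$). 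Once that counting is verified — using the product estimates \eqref{com1}, \eqref{com3}, \eqref{com2} of Lemma~\ref{lem-calculus} with the weight $\chi(v)=(1+|v|^2)^{r/2}$ exactly as in the proof of Lemma~\ref{lem-momentsLij} — the estimates \eqref{coef2} and the remainder bounds follow. Since the structure of the argument is word-for-word that of Lemma~\ref{even} with one extra first-order derivative commuted through, I would state explicitly that the proof follows \emph{mutatis mutandis}, spelling out only the two new source terms ($\pa_{x_i}F\cdot\na_v f$ and, in the $v$-case, $\pa_{v_i}a(v)\cdot\na_x f$) and how their images under $L^{I,J}$ distribute into the main sum and the remainder.
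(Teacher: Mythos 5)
Your proposal is correct and takes essentially the same approach as the paper, which in fact omits a separate proof of Lemma~\ref{odd} and simply states, after Lemma~\ref{odd'}, that the proofs of Lemmas~\ref{odd} and~\ref{odd'} are ``very similar'' to those of Lemmas~\ref{even} and~\ref{even'}; your \emph{mutatis mutandis} adaptation (commuting the extra first-order derivative through $\Tc$ to produce the new sources, then iterating Lemma~\ref{Lij} and running the same bookkeeping as in the proof of Lemma~\ref{even}) is precisely what the authors intend. Two small corrections: the commutators have the opposite sign, $[\Tc,\pa_{x_i}]=-\pa_{x_i}F\cdot\na_v$ and $[\Tc,\pa_{v_i}]=-\pa_{v_i}a(v)\cdot\na_x-\pa_{v_i}F\cdot\na_v$ (and $\na_v\cdot F=0$ plays no role in computing $[\pa_{v_i},F\cdot\na_v]$); and the terms sent to the remainder $R_{x_i,I,J}$ carry at most $2m-1$ total derivatives on $f$, not $2m-3$, since a term with $r<m-k-1$ factors of $L$ has total order $m+k+1+r\le 2m-1$, which is exactly what the local bound $\|f\|_{L^\infty(0,T;\Hc^{2m-1}_r)}\le 2R_0$ controls.
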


\subsection{The equation satisfied by $L^{I,J} \pa^\alpha_x \pa^\beta_v  f$}
Lemma~\ref{even} invites to seek for a closed equation on $L^{I,J} \pa^\alpha_x \pa^\beta_v  f$, for $k \in \{0,\cdots p\}$, $r \in \{m-k,\cdots, m+k\}$, $I,J \in \{1, \cdots, d\}^{r}$ and all $|\a| + |\b| = m+k-r$ (and similarly for what concerns Lemma~\ref{odd}).
This is the purpose of the next two lemmas.

\begin{lemma}
\label{even'}
Let $k=0, \cdots, p$.
Let $r =m-k, \cdots, m+k$.
Assume that $(F^j) \in L^2(0,T; H^{2(m+k)-1}_x)$ with norm bounded by $\Lambda(T,M)$. For
  all $I,J \in \{1, \cdots, d\}^{r}$ and all $|\a| + |\b| = m+k-r$, we have
\begin{equation}
\label{even-eq-general}
\begin{aligned}
&\Tc ( L^{I,J}\pa^\alpha_x \pa^\beta_v  f) +   \pa^\alpha_x \pa^\beta_v\pa^{\alpha(I,J)}_x F \cdot \na_v f =
\\ 
&\qquad \sum_{r'=m-k}^{r}\sum_{K, L \in \{1,\cdots,d\}^{r'}} \sum_{|\alpha'| +|\beta'| = m+k- r'} \gamma^{I,J,\a,\b}_{K,L,\alpha',\beta'} L^{K,L} \pa^{\alpha'}_x \pa^{\beta'}_v  f + R_{I,J,\a,\b}
\end{aligned}
\end{equation}
where 
\begin{itemize}
\item $\gamma^{I,J,\a,\b}_{K,L,\alpha',\beta'}$ are coefficients satisfying
\begin{equation}
\label{coef3}
\|  \gamma^{x_i,I,J}_{K,L,\alpha,\beta},\gamma^{v_i,I,J}_{K,L,\alpha,\beta} \|_{L^2(0,T;W^{d+2, \infty}_{x,v})} \lesssim  \Lambda(T,M),
\end{equation}
\item $R_{I,J,\a,\b}$ is a remainder satisfying 
$$\| R_{I,J,\a,\b} \|_{L^\infty(0,T; \Hc^0_{\r})} \lesssim \Lambda(T,M), \qquad \forall \r\leq r-d/2.$$
\end{itemize}
\end{lemma}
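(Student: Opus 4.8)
The plan is to obtain \eqref{even-eq-general} by induction on $|\alpha|+|\beta|$, starting from the already-established Lemma~\ref{even} (the case $|\alpha|+|\beta|=0$, $r=m+k$) and building up by applying one derivative $\partial = \partial_{x_i}$ or $\partial_{v_i}$ at a time. The key algebraic input is the commutator identity of Lemma~\ref{Lij}, which I would apply not directly but in the form of its iterated/combined version for $L^{I,J}$: since $L^{I,J}=L_{i_1,j_1}\cdots L_{i_k,j_k}$, commuting $L^{I,J}$ past $\Tc$ produces, by telescoping the single-operator formula, the main term $\pa^{\alpha(I,J)}_x F\cdot\na_v f$ plus a finite sum of terms of the shape $L^{K',L'}\big[(L_{a,b}F)\cdot\na_v f\big]$, $L^{K',L'}\big[(L_{a,b}a)\cdot\na_x f\big]$, and $L^{K',L'}\big[\pa_{v_l}a(v)_m\varphi^{a,b}_{c,l}L_{c,m}f\big]$, where $|K'|=|L'|<|I|$. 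These are lower-order in the sense that the total number of $L$-factors plus $x,v$-derivatives matching the weight $m+k-r$ is respected after absorbing the smooth-in-$v$ coefficients $\varphi,\psi$ and the $v$-derivatives of $a,F$ into the $\gamma$'s or $R$'s.

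Concretely, I would set up the induction as follows. Suppose \eqref{even-eq-general} holds for a given $(\alpha,\beta)$ with $|\alpha|+|\beta|=m+k-r$ and some $r\in\{m-k,\dots,m+k\}$. Apply $\partial\in\{\partial_{x_i},\partial_{v_i}\}$ to the whole equation. On the left, $\partial$ commutes with $\Tc$ up to the commutator $[\partial,\Tc]=(\partial a)\cdot\na_x+(\partial F)\cdot\na_v$, which is either lower order in $v$-regularity (when $\partial=\partial_{x_i}$, since $\partial_{x_i}a=0$ and $\partial_{x_i}F$ is controlled in $H^n_x$) or, when $\partial=\partial_{v_i}$, produces $(\partial_{v_i}a)\cdot\na_x f$, a term with one more $x$-derivative but one fewer of the $m+k-r$ budget — precisely the mechanism that drops $r$ to $r-1$ in the $r'$-sum and why the lower summation index is $m-k$ (it can never go below, by the constraint $|\alpha|+|\beta|\le m+k-(m-k)=2k$ combined with $m>p+2$ keeping everything within the available $\Hc^{2m-1}_r$ regularity). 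On the right, $\partial$ lands on $\gamma^{I,J,\alpha,\beta}_{K,L,\alpha',\beta'}$ (still controlled in $L^2_tW^{d+1,\infty}_{x,v}$, absorbed into a new $\gamma$ with one fewer smoothness degree, which is fine since $2m>3d/2+4$ gives ample room), on the $L^{K,L}$ factor (again use Lemma~\ref{Lij} to move $\partial$ inside, generating more terms of the admissible form), on $\pa^{\alpha'}_x\pa^{\beta'}_v f$ (raising $|\alpha'|+|\beta'|$ by one, still within budget), or on $R_{I,J,\alpha,\beta}$ (whose $\Hc^0_{\tilde r}$ bound survives one derivative at the cost of lowering $\tilde r$, staying $\le r-d/2$). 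The term $\pa^\alpha_x\pa^\beta_v\pa^{\alpha(I,J)}_xF\cdot\na_v f$ simply gains one more derivative on $F$, matching the left-hand side of the new equation.

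The main obstacle, as the authors themselves flag in the surrounding text, is not the formal algebra but the bookkeeping of Sobolev regularity: one must verify at each step that (i) every coefficient produced stays in $L^2(0,T;W^{d+2,\infty}_{x,v})$ — this forces careful tracking of how many derivatives of $U=(\varphi^{i,j}_{k,l},\psi^{i,j}_{k,l})$ appear, using the two-tier estimate \eqref{est-Lij} (the $W^{p,\infty}$ bound for $p<n-1-d/2$ and the $\Hc^{n-1}_{-\tilde r}$ bound), exactly as in the proof of Lemma~\ref{lem-momentsLij}; (ii) the remainder terms, which contain products of the form $\pa^{\beta}_v a$, $\pa^\beta_v F$, $\varphi$, $\psi$ times derivatives of $f$ of order at most $2m-1$, are estimated in $\Hc^0_{\tilde r}$ by the product rule \eqref{com1}–\eqref{com3} of Lemma~\ref{lem-calculus} together with $\|f\|_{L^\infty_t\Hc^{2m-1}_r}\le 2R_0$; and (iii) the $r'$-range in the sum never exceeds $[m-k,r]$, which is where the hypothesis $m>p+2$ and $n\ge N=\tfrac32 d+4$ are consumed. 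I would handle (i)–(iii) by organizing the induction so that the inductive hypothesis explicitly records the regularity class of $\gamma$ and $R$ (as stated in \eqref{coef3} and the displayed $R$-bound), so that the inductive step is a matter of checking that one application of $\partial$ degrades each class by at most the permitted amount; since the proof is stated to be ``similar to that of Lemmas~\ref{even} and~\ref{odd}'', I would in the write-up reduce to those by pointing out that the extra $\pa^\alpha_x\pa^\beta_v$ commutes trivially with $\Tc$ modulo terms already of the admissible type, and invoke the product and commutator estimates of Lemma~\ref{lem-calculus} verbatim.
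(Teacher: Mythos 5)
Your proposed induction has a genuine structural gap: the direction is backwards. At $t=0$ the function $L^{I,J}\pa^\alpha_x\pa^\beta_v f$ equals $\pa^{2r}_x\pa^\alpha_x\pa^\beta_v f_0$, so under the constraint $|\alpha|+|\beta|=m+k-r$ its total derivative order is $2r+(m+k-r)=m+k+r$. This equals $2(m+k)$ for your claimed base case $r=m+k$ (Lemma~\ref{even}) and \emph{decreases} to $2m$ as $r$ drops to $m-k$. Applying $\partial$ to the $r=m+k$ equation produces a function with $2(m+k)+1$ derivatives — strictly more than any other case covered by the lemma — so it cannot yield the lower-$r$ cases, which have \emph{fewer} derivatives. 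The various $(r,\alpha,\beta)$-instances of the lemma are different, mutually unrelated functions; they are not successive $\partial$-derivatives of one another, and differentiating the equation of Lemma~\ref{even} never reaches them. A secondary issue: you invoke Lemma~\ref{Lij} to ``move $\partial$ inside $L^{K,L}$'', but that lemma computes $[L_{i,j},\Tc]$, not $[\partial,L_{i,j}]$. The latter commutator produces unstructured derivatives of the coefficients $\varphi,\psi$ and has no counterpart in the paper's toolbox.

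The paper's proof is short precisely because it does not induct from Lemma~\ref{even}. For each $(r,\alpha,\beta)$ one repeats the direct expansion of the proof of Lemma~\ref{even}: write $\Tc(L^{I,J}\pa^\alpha_x\pa^\beta_v f)$ by telescoping Lemma~\ref{Lij} and Leibniz for the extra $\pa^\alpha_x\pa^\beta_v$, then sort the resulting terms by the number of derivatives falling on $f$. Terms carrying at most $2m-1$ derivatives on $f$ are estimated as in Lemma~\ref{even} (via Lemma~\ref{JseK}, Lemma~\ref{lem-calculus}, Sobolev embedding) and absorbed into the remainder $R_{I,J,\alpha,\beta}$. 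Terms carrying $2m+l$ derivatives on $f$, $l\geq 0$, are shown by a purely combinatorial counting argument — using $m>p\geq k$ so that $2m+l-(m+k-r)>r$ — to be exactly of the form $\gamma\,L^{K,L}\pa^{\alpha'}_x\pa^{\beta'}_v f$ with $|K|=|L|=r'\leq r$ and $|\alpha'|+|\beta'|=m+k-r'$. That counting, not an induction, is the whole content of the proof; your second sentence correctly identifies the telescoping expansion, but the induction scheme then discards it. You should run the expansion directly instead.
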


\begin{lemma}
\label{odd'}
Let $k=0, \cdots, p-1$.
Let $r =m-k-1, \cdots, m+k$.
Assume that $(F^j) \in L^2(0,T; H^{2(m+k)}_x)$ with norm bounded by $\Lambda(T,M)$. For all $I,J \in \{1, \cdots, d\}^{r}$,  and all $|\a| + |\b| = m+k+1-r$, we have
\begin{equation}
\label{odd-eq-general}
\begin{aligned}
&\Tc ( L^{I,J}\pa^\alpha_x \pa^\beta_v  f) +   \pa^\alpha_x\pa^\beta_v  \pa^{\alpha(I,J)}_x F \cdot \na_v f =
\\ 
&\qquad \sum_{r'=m-k-1}^{r}\sum_{K, L \in \{1,\cdots,d\}^{r'}} \sum_{|\alpha'| +|\beta'| = m + k +1 - r'} \gamma^{I,J,\a,\b}_{K,L,\alpha',\beta'} L^{K,L} \pa^{\alpha'}_x \pa^{\beta'}_v  f + R_{I,J,\a,\b}
\end{aligned}
\end{equation}
where 
\begin{itemize}
\item $\gamma^{I,J,\a,\b}_{K,L,\alpha',\beta'}$ are coefficients satisfying
\begin{equation}
\label{coef4}
\|  \gamma^{x_i,I,J}_{K,L,\alpha,\beta},\gamma^{v_i,I,J}_{K,L,\alpha,\beta} \|_{L^2(0,T; W^{d+2, \infty}_{x,v})} \lesssim  \Lambda(T,M),
\end{equation}
\item $R_{I,J,\a,\b}$ is a remainder satisfying 
$$\| R_{I,J,\a,\b} \|_{L^\infty(0,T; \Hc^0_{\r})} \lesssim \Lambda(T,M), \qquad \forall \r\leq r-d/2.$$
\end{itemize}
\end{lemma}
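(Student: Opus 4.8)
The plan is to prove Lemma~\ref{odd'} by reducing it to Lemma~\ref{odd} (the case $|\alpha|+|\beta|=1$, $r=m+k$) through a downward induction on $r$, exactly mirroring how one would derive Lemma~\ref{even'} from Lemma~\ref{even}. We recall that by the operator identity~\eqref{LT} in Lemma~\ref{Lij}, applied with $g = \pa^\alpha_x \pa^\beta_v f$, one obtains
\begin{equation*}
\Tc(L^{I,J} \pa^\alpha_x \pa^\beta_v f) = L^{I,J}\Tc(\pa^\alpha_x\pa^\beta_v f) - \text{(commutator terms involving $L_{k,m}$, $L_{i,j}F$, $L_{i,j}a$)},
\end{equation*}
where the commutator terms have already-good structure because of the $\Hc^{n-1}_{-\r}$ and $W^{p,\infty}$ bounds~\eqref{est-Lij} on the coefficients $(\varphi^{i,j}_{k,l},\psi^{i,j}_{k,l})$, combined with the calculus estimates of Lemma~\ref{lem-calculus}. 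On the other hand, $\Tc(\pa^\alpha_x\pa^\beta_v f) = [\Tc, \pa^\alpha_x\pa^\beta_v]f$ (since $\Tc f = 0$), and the commutator $[\pa^\alpha_x\pa^\beta_v, a(v)\cdot\na_x + F\cdot\na_v]f$ is expanded by Leibniz; this is where the term $\pa^\alpha_x\pa^\beta_v\pa^{\alpha(I,J)}_x F \cdot \na_v f$ on the left-hand side of~\eqref{odd-eq-general} comes from (the top-order term where all the $x$-derivatives hit $F^j$ via the decomposition $F = \sum_j A_j(v)F^j(t,x)$), while all lower-order pieces of the commutator, once one re-expresses bare $x$-derivatives $\pa^{\alpha(I,J)}_x$ in terms of $L^{I,J}$ plus lower-order operators via~\eqref{Lmexp}, fall into the sum over $r' \le r$ or into the remainder $R_{I,J,\alpha,\beta}$.

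The key bookkeeping step is the following: whenever a derivative $\pa_{x_i}$ or $\pa_{v_i}$ lands outside $L^{I,J}$ and produces a factor $\pa$ acting on $L^{I,J} f$ (rather than $L^{I,J}\pa f$), one invokes Lemma~\ref{odd} (the $r = m+k$ base case) to rewrite $\Tc(L^{I,J}\pa f)$; whenever a derivative of $F$ strictly lowers the order, one invokes the inductive hypothesis at level $r-1$. Thus I would set up the induction so that the statement at level $r$ follows from: (i) Lemma~\ref{odd} applied after peeling off the outermost derivative $\pa^\alpha_x\pa^\beta_v$ one derivative at a time, and (ii) the already-established statement at level $r-1$, i.e. for operators $L^{K,L}$ with $K,L$ of length $r-1$ and $|\alpha'|+|\beta'| = m+k+1-(r-1)$. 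The coefficient bounds~\eqref{coef4} in $L^2(0,T;W^{d+2,\infty}_{x,v})$ are inherited from~\eqref{est-Lij}: each $\gamma$ is a polynomial in the $(\varphi^{i,j}_{k,l},\psi^{i,j}_{k,l})$ and their derivatives up to some fixed order $\le n-1-d/2$ (which is $\ge d+2$ since $n = 2(m+k) \ge 2m > 3d/2+4$, using~\eqref{def-NR}), multiplied by derivatives of $a$ (bounded by~\eqref{deriv-a}) and of the $A_j$ (bounded by~\eqref{deriv-A}), times at most one factor of $\pa^{\text{(low)}}_x F^j$ which is controlled in $L^2(0,T;H^{n_0}_x)$ for $n_0 > d$ by the hypothesis $(F^j)\in L^2(0,T;H^{2(m+k)}_x)$ and Sobolev embedding. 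The remainder $R_{I,J,\alpha,\beta}$ collects: the commutator terms from~\eqref{LT} (the $L_{k,m}f$, $L_{i,j}F\cdot\na_v f$, $L_{i,j}a\cdot\na_x f$ pieces), the error from using~\eqref{Lmexp} to replace $\pa^{\alpha(I,J)}_x$ by $L^{I,J}$, and the lower-order pieces of the Leibniz expansion of the transport commutator; each is estimated in $L^\infty(0,T;\Hc^0_\r)$ for $\r \le r - d/2$ using~\eqref{est-Lij}, Lemma~\ref{lem-calculus}, and the a priori bound $\|f\|_{L^\infty(0,T;\Hc^{2m-1}_r)} \le 2R_0$ — crucially, every $v$-derivative falling on $f$ in these remainder terms is of order $\le 2m-1$, so it is covered by the local well-posedness regularity, which is why one must check the index arithmetic $m+k+1-r \le \ldots$ carefully so that no term asks for more than $2m-1$ $v$-derivatives of $f$ outside an $L^{K,L}$.

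The main obstacle, exactly as the paper flags for Lemmas~\ref{even} and~\ref{odd}, is not the formal computation — which is a mechanical application of~\eqref{LT} and Leibniz — but the careful tracking of how many derivatives of each type ($x$ on $F^j$, $x$ and $v$ on the coefficients $\varphi,\psi$, $v$ on $f$) appear in every term, and verifying that these never exceed the available regularity: $2(m+k)$ for $F^j$ by hypothesis, $n-1 - d/2 \ge d+2$ for the coefficients by~\eqref{est-Lij}, and $2m-1$ for bare $v$-derivatives of $f$ by Proposition~\ref{lem-local}. The constraint $m > p+2$ (equivalently $n > \lfloor n'/2\rfloor + 1$) enters precisely here, guaranteeing enough separation between the top regularity index $2(m+k) \le 2(m+p-1)$ and the baseline $2m-1$ for all the Sobolev product estimates and embeddings to close. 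I would therefore organize the proof as: (1) write the identity~\eqref{LT} for $g = \pa^\alpha_x\pa^\beta_v f$ and expand $\Tc g$ via Leibniz; (2) isolate the top-order term $\pa^\alpha_x\pa^\beta_v\pa^{\alpha(I,J)}_x F \cdot \na_v f$; (3) re-express all surviving $\pa^{\alpha}_x$-type factors on $f$ via~\eqref{Lmexp}, sorting the output into the $\gamma^{I,J,\alpha,\beta}_{K,L,\alpha',\beta'} L^{K,L}\pa^{\alpha'}_x\pa^{\beta'}_v f$ sum (using Lemma~\ref{odd} and the induction hypothesis at level $r-1$) and the remainder; (4) verify~\eqref{coef4} and the $R_{I,J,\alpha,\beta}$ bound by the index count above; the whole argument being entirely parallel to the proof of Lemma~\ref{even'}, which in turn parallels that of Lemma~\ref{even}, so one may legitimately write ``the proof is similar to that of Lemmas~\ref{even} and~\ref{odd}'' after indicating these structural points.
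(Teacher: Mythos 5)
Your final plan (steps (1)--(4)) correctly reproduces the paper's argument: apply the iterated commutation formula~\eqref{LT} to $g = \pa^\alpha_x\pa^\beta_v f$, Leibniz-expand $\Tc g = [\Tc, \pa^\alpha_x\pa^\beta_v]f$, re-express the resulting derivative blocks via~\eqref{Lmexp}, and verify by derivative-counting that every group of at least $2m$ derivatives on $f$ reassembles into an $L^{K,L}\pa^{\alpha'}_x\pa^{\beta'}_v f$ with the prescribed lengths $|K|=|L|=r'$ and $|\alpha'|+|\beta'|=m+k+1-r'$, while everything else lands in a remainder requiring no more than $2m-1$ bare $v$-derivatives of $f$ --- this is precisely the counting the paper carries out in the proof of Lemma~\ref{even'}, which it then says transfers verbatim to the odd case. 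What does \emph{not} match the paper, and I would drop, is the framing as a ``downward induction on $r$'' that reduces the level-$r$ statement to the level-$(r-1)$ statement via Lemma~\ref{odd}: the paper does not organize the proof this way, nor is it clear that such an induction is well-formed, since the operators $L^{I,J}$ at lengths $r$ and $r-1$ are unrelated by any simple reduction and~\eqref{odd-eq-general} at level $r$ already involves all $r'\leq r$ simultaneously in its triple sum rather than bootstrapping from one level to the next. Once the induction scaffolding is removed, what remains is the direct expansion-plus-counting argument, which is the paper's proof.
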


We observe that as wanted, Lemmas~\ref{even'} and~\ref{odd'} provide \emph{closed} systems of equations.

\bigskip

To conclude this section, we shall  give the proofs of Lemmas~\ref{even} and~\ref{even'} (the proofs of the remaining Lemmas~\ref{odd} and~\ref{odd'} being very similar).

\subsection{Proof of Lemmas~\ref{even} and~\ref{even'}}

\begin{proof}[Proof of Lemma~\ref{even}]
Let $\r< r-d/2$. Since $r>d$, we can assume, without loss of generality, that $\r > d/2$.
 We can write, by an induction argument relying on Lemma~\ref{Lij}, that
 $$
 \Tc ( L^{I,J} f)  =   F_{I,J}
$$
 with the source term $F_{I,J}$  is given by
$F_{I,J}= -  \sum_{i=1}^4 F_{i}$, 
   where 
  \begin{align}
  \label{F1def}
 F_{1}&= \sum_{\ell=1}^{m+k-1} L_{i_{1}, j_{1} } \cdots L_{i_{m+k-\ell}, j_{m+k-\ell} } \\&\qquad \qquad \times \Big(( \partial^2_{x_{i_{m+k-\ell+1} }, x_{j_{m+k-\ell+1} } } F)\cdot \nabla_{v} L_{i_{m+k-\ell+2},j_{m+k-\ell+2} } \cdots L_{i_{m+k}, j_{m+k} } f \Big), \nonumber\\
  \label{F2def}
 F_{2}&= \sum_{\ell=1}^{m+k-1} L_{i_{1}, j_{1} } \cdots L_{i_{m+k-\ell}, j_{m+k-\ell} }  \Big( \Big[ \sum_{  k,l } \varphi_{k,l}^{{i_{m+k-\ell+1}},{j_{m+k-\ell+1}}}  \pa_{x_{i_{m+k-\ell+1}}} \pa_{v_{j_{m+k-\ell+1} } } F  \\
  &+  \psi_{k,l}^{{i_{m+k-\ell+1}},{j_{m+k-\ell+1}}} \pa^2_{v_{i_{m+k-\ell+1}},v_{j_{m+k-\ell+1} }}F\Big]    \cdot \nabla_{v} L_{i_{m+k-\ell+2},j_{m+k-\ell+2} } \cdots L_{i_{m+k}, j_{m+k} } f \Big), \nonumber\\
    \label{F3def}
       F_{3}&= \sum_{\ell=1}^{m+k-1} L_{i_{1}, j_{1} } \cdots L_{i_{m+k-\ell}, j_{m+k-\ell} }  \Big( \Big[ \sum_{  k,l } \psi_{k,l}^{{i_{m+k-\ell+1}},{j_{m+k-\ell+1}}} \pa^2_{v_{i_{m+k-\ell+1}},v_{j_{m+k-\ell+1} }}a\Big] \\
        &\qquad\qquad\qquad \cdot \nabla_{x} L_{i_{m+k-\ell+2},j_{m+k-\ell+2} } \cdots L_{i_{m+k}, j_{m+k} } f \Big), \nonumber\\
        \label{F4def}
       F_{4}&=  \sum_{\ell=1}^{m+k-1} L_{i_{1}, j_{1} } \cdots L_{i_{m+k-\ell},  j_{m+k-\ell}} \\
       &\times\sum_{k',l',m'} \pa_{v_{l'}}a(v)_{m'}   \varphi^{i_{m+k-\ell+1}, j_{m+k-\ell+1} }_{k',l'} L_{k',m'} L_{i_{m+k-\ell+2},j_{m+k-\ell+2} } \cdots L_{i_{m+k}, j_{m+k} } f .
       \nonumber
   \end{align}

   \bigskip
   
We shall focus on the contribution of $F_{1}$. We have to estimate terms under the form
\begin{equation}
\label{termsF1}
 F_{1, \ell}= L^{m+k-\ell} G_{\ell}, \quad G_{\ell}= \partial^2 E \cdot \nabla_{v} L^{\ell-1}
 \end{equation}
 where we use the notation $L^{n}$ for the composition of $n$ operators of type  $L_{i, j}$  (the exact combination of the operators involved in the composition does not matter here).
  Note that as in \eqref{Lmexp}, we can  develop  $L^n$ under the form
  \begin{equation}
  \label{Lpexp}
  L^n= \partial_{x}^{\alpha_{n}} +  \sum_{s=0}^{2n-2} \sum_{e, \, \alpha, \, k_{0}  \cdots k_{s}}  P_{s, e,\alpha}^{k_{0}}(U) P_{s,e, \alpha}^{k_{1}}(\partial U) \cdots P_{s,e,\alpha}^{k_{s}}(\partial^{s} U) \partial_{v}^e \partial^{\alpha}, 
    \end{equation}
     where for all $0\leq p \leq s$,  $ P_{s,e, \alpha}^{k_{p}}(X)$ is a polynomial of degree smaller than   $k_{p}$, the multi-index $\alpha_{n}$ has length $2n$ and  the sum is  taken on indices such that
      \begin{equation}
      \label{indices2}
       |e|= 1, \, | \alpha |= 2n - 1-s, \, k_{0} + k_{1}+ \cdots k_{s} \leq n , \, k_{0} \geq 1, \, k_{1} + 2 k_{2}+ \cdots s k_{s}= s.
      \end{equation}

       Let us first establish a general  estimate, adapted from \cite{HKR}.
        We set for  any fonction $G(x,v)$,
         \begin{equation}
         \label{Jp}
         J_{p}(G)(x,v) =  \sum_{s, \, \beta,  K \in E} J_{p, s, \beta, K}(G)
         \end{equation}
          where $K= (k_{0}, \cdots, k_{s})$ and  
        \begin{equation}
         \label{Jbeta}J_{p, s, \beta, K} (G) (x,v)=  P^{k_{0}}_{s, \beta}(U) P^{k_{1}}_{s, \beta}(\partial U) \cdots P^{k_{s}}_{s, \beta}(\partial^{s} U)  \partial^{\beta}  G
             \end{equation}
        where  for all $0\leq r \leq s$, $ P^{k_{r}}_{s, \beta}(X)$ is a  polynomial of degree smaller than   $k_{r}$ and the sum is taken over indices belonging to the set $E$ defined by 
\begin{equation}
\label{indices1}  \, | \beta |= p -s, \, k_{0} + k_{1}+ \cdots k_{s} \leq p/2 , \, k_{1} + 2 k_{2}+ \cdots s k_{s}= s, \, 0 \leq s \leq p-2.
\end{equation}
 \begin{lemma}
 \label{JseK}
  For $2(m+k)-1 \geq p  \geq 1$,  $ 2m >d + 3$, $\r>d/2$ and $s, \, p, \, K$ satisfying  \eqref{indices1},  we have the estimate
  \begin{equation}
  \label{estJseK}
  \|  J_{p} (G) \|_{\Hc^0_{\r}} \leq \Lambda(T, M) \Bigl(  \| G \|_{\Hc^{p}_{\r}} +  \sum_{ \begin{array}{ll} {\scriptstyle l \geq 2(m+k) - {d \over 2} - 2,} \\ {\scriptstyle  l+ | \alpha | \leq p, \, | \alpha | \geq 2}  \end{array}} \| \partial^l U   \partial^\alpha G \|_{\Hc^0_{\r}} \Bigr).
  \end{equation}
 \end{lemma}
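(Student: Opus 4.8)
The plan is to reduce \eqref{estJseK} to a finite collection of elementary bounds by inspecting the multi-index constraints~\eqref{indices1}, and then to treat each elementary term by deciding whether the derivatives landing on $U$ can be absorbed in $L^\infty$ or must be kept inside the weighted $L^2$ norm. First I would use that $J_p(G)$ is a \emph{finite} sum of the terms $J_{p,s,\beta,K}(G)$, and that expanding every polynomial $P^{k_r}_{s,\beta}$ into monomials further reduces the problem to estimating, in $\Hc^0_\r$, finitely many expressions of the form $\Theta = \big(\prod_{r'} \pa^{l_{r'}} U_{r'}\big)\,\pa^\beta G$, where each $U_{r'}$ is a component of $U$. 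From~\eqref{indices1} one only needs to retain: the number of $U$-factors is at most $\sum_r k_r\le p/2$; since a factor coming from the block $P^{k_r}_{s,\beta}(\pa^r U)$ has order exactly $r$ and that block contributes at most $k_r$ factors, the orders obey $l_{r'}\le s$ and $\sum_{r'} l_{r'}\le \sum_{r\ge1} r k_r = s$; and $|\beta|=p-s$ with $2\le|\beta|\le p$, the lower bound coming from $s\le p-2$.

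Next I would perform the dichotomy on $l_* := \max_{r'} l_{r'}$ (with $l_*=0$ if $\Theta$ contains no genuine derivative of $U$). If $l_* < 2(m+k)-d/2-2$, then the first bound in~\eqref{est-Lij} gives $\|\pa^{l_{r'}} U_{r'}\|_{L^\infty_{x,v}}\lesssim \Lambda(T,M)$ for every factor, hence $\|\Theta\|_{\Hc^0_\r}\lesssim \Lambda(T,M)\,\|\pa^\beta G\|_{\Hc^0_\r}\le \Lambda(T,M)\,\|G\|_{\Hc^p_\r}$ since $|\beta|\le p$. If instead $l_*\ge 2(m+k)-d/2-2$, I would first record that $s/2 < 2(m+k)-d/2-2$: this follows from $s\le p-2\le 2(m+k)-3$ together with the standing assumption $2m>d+3$ (which forces $2(m+k)>d+1$). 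Consequently $l_*>s/2$, and since $\sum_{r'} l_{r'}\le s < 2l_*$ the order $l_*$ is attained by exactly one of the $U$-factors while every other $U$-factor has order $\le s/2 < 2(m+k)-d/2-2$ and is therefore bounded in $L^\infty_{x,v}$ by $\Lambda(T,M)$ via~\eqref{est-Lij}. Pulling these factors out leaves $\|\Theta\|_{\Hc^0_\r}\lesssim \Lambda(T,M)\,\|\pa^{l_*} U_*\,\pa^\beta G\|_{\Hc^0_\r}$; and as $l_*\le \sum_{r'} l_{r'}\le s$ we get $l_*+|\beta|\le s+(p-s)=p$ while $|\beta|=p-s\ge 2$, so this residual term is exactly one of those appearing in the sum on the right-hand side of~\eqref{estJseK}. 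Summing the contributions of all monomials and of all $(s,\beta,K)$ --- a finite set whose cardinality depends only on $p$ and $d$ --- then yields~\eqref{estJseK}.

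The only genuinely delicate point is the combinatorial bookkeeping in the second case: one has to extract from~\eqref{indices1}, after expansion into monomials, that at most one ``bad'' high-order derivative of $U$ can occur and that it occurs linearly, and that the residual product $\pa^{l_*} U_*\,\pa^\beta G$ has total order at most $p$ in $(x,v)$ with $|\beta|\ge 2$, so that it falls precisely in the index range of the remainder sum in~\eqref{estJseK}. Everything else is a direct application of the Sobolev estimates~\eqref{est-Lij} of Lemma~\ref{lem-Lij}, the threshold inequalities being guaranteed by $2m>d+3$.
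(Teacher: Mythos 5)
Your proof is correct and follows essentially the same strategy as the paper's: a dichotomy on the maximum order of derivatives of $U$, with the low-order regime absorbed in $L^\infty_{x,v}$ via~\eqref{est-Lij}, and the high-order regime shown (using~\eqref{indices1} together with $2m>d+3$) to consist of a single linearly-occurring factor $\partial^{l_*}U$, whose residual $\partial^{l_*}U\,\partial^\beta G$ satisfies $l_*+|\beta|\le p$ and $|\beta|\ge 2$ and so lands in the remainder sum. The only cosmetic difference is that you expand the polynomial blocks $P^{k_r}$ into monomials and dichotomize directly on the maximal factor order $l_*$, whereas the paper first splits on $s$ and then on the largest index $l$ with $k_l\neq 0$; the combinatorial conclusions are the same.
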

 \begin{proof}[Proof of Lemma~\ref{JseK}]
 
  For the terms in the sum such that $s  < 2 (m+k) - { d \over 2} - 2$, we can use estimate~\eqref{est-Lij} in Lemma~\ref{lem-Lij}  to obtain that
 $$ \|  J_{p,s, \beta, K} (G) \|_{\Hc^0_{\r}} \leq \Lambda(T, M) \| G \|_{\Hc^{p}_{\r}}.$$
  When $s \geq  2 (m+k) - { d \over 2} - 2$, we first consider the terms for which in the sequence $(k_{1}, \cdots, k_{s})$ the largest index $l$ for which 
  $ k_{l} \neq 0$ is such that $l  < 2(m+k)-{d \over 2} - 2$. Then again thanks to  estimate~\eqref{est-Lij} in Lemma~\ref{lem-Lij}, we obtain that
  $$ \|  J_{p,s, \beta, K} (G) \|_{\Hc^0_{\r}} \leq \Lambda(T, M) \| G \|_{\Hc^{p}_{\r}}.$$
   When $l \geq 2(m+k)-{d\over 2} - 2$, we first observe that we necessarily have $k_{l}= 1$. Indeed if $k_{l} \geq 2$,  because of  \eqref{indices1}, we must have $ l \leq {s \over 2 }$. This is possible only if $  2(m+k)-{d\over 2} - 2 \leq   {p - 2 \over 2}  \leq  {2(m+k)-3 \over 2}$, which corresponds to $m+k \leq  {d \over 2} + 1$  and hence this is impossible. Consequently $k_{l} = 1$.
    Moreover we note that for the other indices $\tilde l$ for which  $k_{\tilde l} \neq 0$, because of \eqref{indices1}, we must have $ \tilde l k_{ \tilde l} \leq s -l k_l$, so that
    $$ \tilde l  \leq s- l \leq s - 2(m+k) + {d \over 2} + 2
     \leq {d \over 2} - 1$$ 
     and we observe that ${d \over 2} - 1 < 2m - {d \over 2} - 2$. Consequently, by another use of estimate~\eqref{est-Lij} in Lemma~\ref{lem-Lij}, we obtain that 
     $$  \|  J_{p,s, \beta, K} (G) \|_{\Hc^0_{\r}} \leq \Lambda(T, M)   \sum_{ \begin{array}{ll} \scriptstyle{ l\geq  2(m+k) - {d \over 2} - 2,} \\ \scriptstyle{ l+ | \alpha | \leq p, \, | \alpha | \geq 2}  \end{array}} \| \partial^l U   \partial^\alpha G \|_{\Hc^0_{\r}}.$$
      The fact that $| \alpha | \geq 2$ comes from \eqref{indices1}.
      This ends the proof of Lemma \ref{JseK}.
 \end{proof}
 
  We shall now estimate $F_{1, \ell}$. 
   Looking at the expansion of $L^{m+k-\ell}$ given by \eqref{Lpexp}, we have to estimate terms under the form $ J_{p}(G_{\ell})$ with $p \leq 2(m+k-\ell)$. 
 Using~\eqref{Lpexp}, we decompose $G_{\ell}$ in the following way:
 $$
 \begin{aligned}
 G_{\ell} &= \pa^2 F \cdot \na_v L^{\ell-1} f \\
 &= \pa^2 F \cdot \na_v \left( H_{\ell,+}+ H_{\ell,-}\right) \\
 &=:  G_{\ell,+} +  G_{\ell,-}.
 \end{aligned}
 $$
 where 
 \begin{itemize}
 \item in $H_{\ell+}$, we gather all  terms of the form~\eqref{Jbeta}, with $G=f$, such that $2k+1 + |\beta| \geq 2\ell$. These terms may contribute to terms with at least $2m$ derivatives on $f$.
\item On the other hand in $H_{\ell,-}$, the terms  that arise correspond to $2k+1 + |\beta| < 2\ell$, which involve at most $2m-1$ derivatives on $f$. 
 \end{itemize}
 \bigskip
 
  We first focus on the contribution of $G_{\ell,-}$; we denote  
  $$F_{1,\ell,-} := L^{m+k-\ell} G_{\ell,-}.$$ 
  Let us start with the case   $\ell \geq (m+k)/2$. We can  use the decomposition~\eqref{Lpexp}, which entails that we have to estimate terms of the form 
  $J_p(G_{\ell,-})$ with $p\leq 2(m+k-\ell)\leq 2(m+k)-1$, and apply Lemma~\ref{JseK} to get  
   \begin{multline}  
   \label{estJseK-ap}
    \|F_{1,\ell,-}\|_{L^2(0, T; \Hc^0_{\r})} \leq   \Lambda(T,M) \\
       \Bigl(  \|G_{\ell,-} \|_{L^2 (0, T; \Hc^{2(m+k-\ell)}_{\r})} + 
   \sum_{ \begin{array}{ll} \scriptstyle{ l \geq  2(m+k) - {d \over 2} - 2,} \\ \scriptstyle{ l+ | \alpha | \leq 2(m+k-\ell), \, | \alpha | \geq 2}  \end{array}} \| \partial^l U   \partial^\alpha G_{\ell,-} \|_{L^2(0, T; \Hc^0_{\r})}
     \Bigr).
     \end{multline}
  We observe that in the right hand side of \eqref{estJseK-ap}, we  have that 
  $l \leq  2(m+k-\ell) - 2 \leq m+k - 2$; consequently, since $2m-1 > d -1$, we have $ l <  2(m+k) - {d \over 2 } - 2$ and hence we can estimate
   $\| \partial^l U \|_{L^\infty}$ by using estimate~\eqref{est-Lij} in Lemma~\ref{lem-Lij}.
     This yields
    $$ \| F_{1,\ell,-} \|_{L^2(0, T; \Hc^0_{\r})} \leq \Lambda(T,M) \| G_{\ell,-}\|_{L^\infty(0, T; \Hc^{2(m+k-\ell) }_{\r})}, \quad \ell \geq (m+k)/2.$$
Then we  use  estimate  \eqref{com3}  in Lemma~\ref{lem-calculus} with $s=2(m+k-\ell)$ and $s_0 =d+1$, and the definition of $G_{\ell,-}$ to estimate $\| G_{\ell,-}\|_{\Hc^{2(m+k-\ell) }_{\r}}$.
  Since $ d+2 < 2 m-1$  and $ 2(m+k-\ell) + 2 \leq 2(m+k)-1$ (since $\ell  \geq (m+k)/2 \geq 2$), we obtain
  \begin{equation}
   \label{F1kinterm}
 \begin{aligned}
  &\|F_{1,\ell,-}\|_{L^2(0, T; \Hc^0_{\r})} \\
  & \leq   \Lambda(T,M) \big( \sup_j  \| F^j \|_{L^2(0, T; H^{d+1})} \|\nabla_vH_{\ell,-} \|_{L^\infty(0, T; \Hc^{2(m+k-\ell)}_{\r}) } \\
  &\qquad \qquad \qquad+ \sup_j \| F^j \|_{L^2(0, T; H^{2(m+k-\ell) + 2})} \| \nabla_vH_{\ell,-} \|_{L^\infty(0, T; \Hc^{2(m+k-\ell)}_{\r}) }  \big) \\
 &   \leq    \Lambda(T,M) \sup_j\| F^j \|_{L^2(0, T; H^{2(m+k)-1})} \|\nabla_v  H_{\ell,-}\|_{L^\infty(0, T; \Hc^{2(m+k-\ell)}_{\r})} .
 \end{aligned}
 \end{equation}
  By using  the regularity assumption on $F^j$, this yields
  $$   \|F_{1,\ell,-}\|_{L^2 (0, T; \Hc^0_{\r})}  \leq   \Lambda(T,M)  \|\nabla_v  H_{\ell,-} \|_{L^\infty(0, T; \Hc^{2(m+k-\ell)}_{\r})}.$$
   To estimate the above right hand side, we need to estimate
   $ \partial_{x,v}^\gamma H_{\ell,-}$ with  $| \gamma | \leq 2(m + k-\ell) + 1$.
  Recalling the definition of $H_{\ell,-}$,  by  taking derivatives using  the expression  \eqref{Lpexp},  we see that we have  to estimate terms under the form
     $ J_{p}(f)$  with $p \leq 2m-1$. Using Lemma~\ref{JseK} one more time, we thus obtain that
   \begin{multline*}  
    \|F_{1,\ell,-}\|_{L^2(0, T; \Hc^0_{\r})} \\
     \leq   \Lambda(T,M) \Bigl(  \| f \|_{L^\infty (0, T; \Hc^{2 m-1}_{\r})} + 
   \sum_{ \begin{array}{ll} \scriptstyle{ l \geq  2(m+k) - {d \over 2} - 2,} \\ \scriptstyle{ l+ | \alpha | \leq 2m-1, \, | \alpha | \geq 2}  \end{array}} \| \partial^l U   \partial^\alpha f \|_{L^\infty(0, T; \Hc^0_{\r})}
     \Bigr).
     \end{multline*}
   To estimate the right hand side, we argue as follows. 
   Let $r'>d/2$ such that $\r+r' \leq r$. 
        Since $| \alpha | \geq 2$ and $|\alpha| - 2 + l \leq 2m-3$, we can use estimate \eqref{comdual} in Lemma~\ref{lem-calculus} (taking $\chi(v) = ( 1 + |v|^2)^{r'\over 2}$), to  obtain that 
    \begin{equation}
\label{before}
 \| \partial^l U  ( 1 + |v|^2)^{\r\over 2} \partial^\alpha f\|_{L^2_{x,v}} \lesssim \|U\|_{\Hc^{2m-3}_{-r'}} \|  ( 1 + |v|^2)^{r}\partial^2 f \|_{L^\infty} +  
          \|U \|_{L^\infty} \|f \|_{\Hc^{2m-1}_{r}}.
\end{equation}
          By using again estimate~\eqref{est-Lij} in Lemma~\ref{lem-Lij}  and the Sobolev embedding, we finally obtain that
          \begin{equation}
          \label{F1k1} \|F_{1,\ell,-}\|_{L^2(0, T; \Hc^0_{\r})}  \leq   \Lambda(T,M) \|f\|_{L^\infty(0, T; \Hc^{2m-1}_{r})} \leq \Lambda(T, M), \quad \ell \geq (m+k)/2.
          \end{equation}
          
          It remains to handle the case $\ell \leq (m+k)/2$. Note that necessarily, for these cases to be meaningful, we must have $2k+1 < 2\ell$. Assume first $\ell \geq 2$. We obtain again~\eqref{estJseK-ap}. 
%
   We first need  to estimate $\|\pa^2 F \cdot \na_v H_{\ell,-} \|_{L^2 (0, T; \Hc^{2(m+k-\ell)}_{\r})}$.
     We thus have to study
     $$
     \|\pa^\beta \pa^2 F \cdot \na_v \pa^\gamma H_{\ell,-} \|_{L^2 (0, T; \Hc^{0}_{\r})}
     $$
     with $|\beta| + |\gamma| \leq 2(m+k-\ell)$. Since $\ell \geq 2$, we have $|\b|+2 \leq 2(m+k-1)$.
     If $|\b| +2 < 2(m+k)-1 -d/2$, then we get by Sobolev embedding the bound
       $$
       \begin{aligned}
     \|\pa^\beta \pa^2 F \cdot \na_v \pa^\gamma H_{\ell,-} \|_{L^2 (0, T; \Hc^{0}_{\r})} &\leq   
   \sup_j    \|\pa^\beta \pa^2 F^j  \|_{L^2 (0, T; L^\infty_x)}  
     \|\na_v \pa^\gamma H_{\ell,-} \|_{L^2 (0, T; \Hc^{0}_{\r})} \\
     &\leq      \sup_j \| F^j  \|_{L^2 (0, T;H^{2(m+k)-1}_x)}  \|   f \|_{L^2 (0, T; \Hc^{2m-1}_{\r})} \\
     &\leq \Lambda(T,M),
     \end{aligned}
     $$
 recalling the definiton of $H_{\ell,-}$. If $|\b|  \geq 2(m+k)-3 -d/2$, then $|\gamma| \leq 2(m+k-\ell) -2(m+k)+3 +d/2$ and thus the term $\na_v \pa^\gamma H_{\ell,-}$ involves at most $d/2 +2$ derivatives. 
 Since $2m-1 > 3d/2 +2$, we have this time
   $$
       \begin{aligned}
     \|\pa^\beta \pa^2 F \cdot \na_v \pa^\gamma &H_{\ell,-} \|_{L^2 (0, T; \Hc^{0}_{\r})} \\
     &\leq   
    \sup_j   \|\pa^\beta \pa^2 F^j   \|_{L^2 (0, T; L^2_x)}  
     \| H_{\ell,-} \|_{L^2 (0, T; \mathcal{W}^{d/2 +2,\infty}_{\r})} \\
     &\leq     \sup_j  \| F^j  \|_{L^2 (0, T;H^{2(m+k)-1}_x}  \|   f \|_{L^2 (0, T; \Hc^{2m-1}_{r})} \\
     &\leq \Lambda(T,M).
     \end{aligned}
     $$
     We also have to estimate terms in~\eqref{estJseK-ap} under the form 
     $$ \|   \partial^l U  \partial^\beta \partial^2 F\,  \partial^\gamma \nabla_{v} H_{\ell,-} \|_{\Hc^0_{\r}}$$
      with $  l \geq  2(m+k) - {d \over 2} - 2$ and $    l+ | \beta  | + | \gamma |  \leq 2(m + k-\ell).$
       Note that this implies that  $| \beta | \leq    2(m+k-\ell)-l \leq {d\over 2} + 2  - 2\ell \leq {d \over 2}$
        since we have $\ell \geq 1$. In particular this yields $|\beta | + 2 < 2m-1- {d \over 2} $ since $2m> 3 +{d\over 2}$, and thus 
       by using the  Sobolev embedding  and~\eqref{Fj}, we obtain that  
       \begin{align*}
          \|   \partial^l U  \partial^\beta \partial^2 F \partial^\gamma \nabla_{v}H_{\ell,-} \|_{\Hc^0_{\r}} 
                    &\lesssim \sup_j \| F^j \|_{H^{2m-1}_x} \| \partial^l U   \partial^\gamma \nabla_{v} H_{\ell,-} \|_{\Hc^0_{\r}} \\
                    &\lesssim (\| f \|_{\Hc^{2m-1}_{r} } + \sup_j \| F^j (0)\|_{H^{2m-1}_x} )\| \partial^l U   \partial^\gamma \nabla_{v} H_{\ell,-} \|_{\Hc^0_{\r}} \\
          &\leq \Lambda(T, M )   \| \partial^l U   \partial^\gamma \nabla_{v} H_{\ell,-}\|_{\Hc^0_{\r}}.
           \end{align*}
        Consequently, it remains to estimate  $ \| \partial^l U   \partial^\gamma \nabla_{v} H_{\ell,-} \|_{\Hc^0_{\r}} $ for 
         $  l \geq  2(m+k) - {d \over 2} - 2$ and $    l+ | \gamma |  \leq 2(m + k-\ell).$
         By using again \eqref{Lpexp} and the definition of $H_{\ell,-}$, we can expand  $ \partial^\gamma \nabla_{v} H_{\ell,-}$ as terms of  the form $J_{p} (f)$, with $p\leq 2(\ell-k)+ | \gamma |-1$.
          Since we have that
          $  2 (\ell-k) + | \gamma | - 1 \leq  1 + {d \over 2 } < 2(m+k)- {d \over 2} - 2$, we can use estimate~\eqref{est-Lij} in Lemma~\ref{lem-Lij}  again to estimate all  the terms in the expression of $J_{p} (f)$ involving $U$ 
           and its derivatives in 
           in $L^\infty$. This yields
           $$   \| \partial^l U   \partial^\gamma \nabla_{v} H_{\ell,-} \|_{\Hc^0_{\r}} \leq \Lambda( T, M) \sum_{\tilde \gamma}  \| \partial^l U  \partial^{\tilde \gamma} f \|_{\Hc^0_{\r}}$$
            with  $ |\tilde \gamma | \leq  |\gamma |+ 2 (\ell-k) - 1$. 
            Consequently, arguing as for~\eqref{before}, 
            we obtain that 
            $$ \| \partial^l U   \partial^\gamma \nabla_{v} H_{\ell,-} \|_{\Hc^0_{\r}} \leq \Lambda( T, M)  \left( \| U \|_{L^\infty}
             \| f \|_{\Hc^{2m-1}_{r}} +  \| (1 +  |v|^2)^{r}  f \|_{L^\infty_{x,v}} \|U \|_{\Hc^{2m-1}_{-r'}}\right),$$
     where we recall $r'>d/2$         and we conclude finally by invoking estimate~\eqref{est-Lij} in Lemma~\ref{lem-Lij}  and  the Sobolev embedding that
           \begin{equation}
           \label{F1k2} \|F_{1, \ell,-} \|_{L^2(0, T; \Hc^0_{\r})} \leq \Lambda(T, M), \quad 2 \leq \ell \leq (m+k)/2.
           \end{equation}
              For the case $\ell =1$ to be meaningful, $k$ must be equal to $0$. We set aside the term $ \pa^{\alpha(I,J)}_x F \cdot \na_v f $ (which appears in the formula~\eqref{even-eq}), and we thus have to study the term
              $$
 L_{i_{1}, j_{1} } \cdots L_{i_{m-1}, j_{m-1} } \Big( \partial^2_{x_{i_{m} }, x_{j_{m} } } F \cdot \nabla_{v} f \Big) -\pa^{\alpha(I,J)}_x F \cdot \na_v f .            $$
              We argue exactly as before to obtain a bound in ${L^2(0, T; \Hc^0_{\r})}$ by $\Lambda(T, M)$ (note indeed that at most $2m-1$ derivatives of $f$ and $F$ are involved).
              Gathering all pieces together, we have thus proven that
            \begin{equation}
            \label{estimationFk1}
              \|F_{1,\ell,-} \|_{L^2(0, T; \Hc^0_{\r})} \leq \Lambda(T, M).
              \end{equation}
              
              \bigskip
              
              Let us now treat the contribution of $G_{\ell,+}$, which will give rise to terms involving $2m$ up to $2(m+k)$ derivatives of $f$.
              Let $j \in \{0,\cdots, 2k\}$. Let us describe the form of the terms involving derivatives of order $2m+j$ of $f$. We note that $2m+j -1 \geq 2m -1  > m+p-1 \geq m+k-1$.
              This means that such terms are necessarily of the form 
              \begin{equation}
              \label{commu-diff}
             \Bigg( \pa^{\a^0}_x \pa^{\b^0}_v L_{i_1,j_1} \cdots  L_{i_k,j_k} \pa^{\a^k}_x \pa^{\b^k}_v\cdots  L_{i_{m+j-k},j_{m+j-k}}\pa^{\a^{m+j-k}}_x \pa^{\b^{m+j-k}}_v\Bigg) f ,
\end{equation}
              with 
              $$\sum_{k=0}^{m+j-k} |\a^k| + |\b^k| =  2k-j, \qquad \sum_{k=0}^{m+j-k} |\b^k| \neq 0.$$
              In order to have exactly $2m+j$ derivatives of $f$, this expression can be rewritten as
                $L^{K,L} \pa^\alpha_x \pa^\b_v f$,
              where $|K|=|L|= m+j-k$ and $|\a| + |\b| = 2k-j$, $|\b| \geq 1$. Indeed if derivatives
              fall on a coefficient of one of the $ L_{i_k,j_k}$, then there are less than $2m+j$ derivatives on $f$.
              
              We denote by $\gamma^{I,J,1}_{K,L,\alpha,\beta}$ the coefficient associated to such  terms.
              Remark that for $|\gamma| \leq 2k-j-1$, $\pa^\gamma_x \pa^2 F^i  \in L^2(0,T; H^{2m+j-2}_x)$.
              Since we have $2m>3d/2 +4$,  we can bound this term in $L^2(0,T; W^{d+2, \infty}_{x})$ by Sobolev embedding. 
              Likewise, for $|\gamma| \leq 2k-j-1$, since $2m+j-1-d/2> d+2$  we have $\pa^\gamma_{x,v} U  \in L^\infty(0,T; W^{d+2, \infty}_{x,v})$.   All in all, we deduce
              $$
              \| \gamma^{I,J,1}_{K,L,\alpha,\beta} \|_{L^2(0,T; W^{d+2, \infty}_{x,v})} \leq \Lambda(T,M).
              $$
          It remains to treat the other terms that all involve at most $2m-1$ derivatives are involved on $f$. If $k\geq 1$, we set aside the  term  $ \pa^{\alpha(I,J)}_x F \cdot \na_v f $ in~\eqref{even-eq},
 which corresponds to the case $\ell=1$ treated above (relevant when $k=0$). 
             
              The other terms  can be considered as remainders that are uniformly bounded in ${L^2(0, T; \Hc^0_{\r})}$, since at most $2m-1$ derivatives are involved on $f$
              and at most $2(m+k)-1$ derivatives are involved on $F$; these terms can be treated exactly as we did to treat the remainders in $G_{\ell,-}$.              

   The treatment of $F_2, F_3, F_4$ gives rise to similar terms and we omit it.  
   
   \end{proof} 
   
   \begin{proof}[Proof of Lemma~\ref{even'}] The proof is similar to the previous one.
   We shall only explain why the terms involving at least $2m$ derivatives of $f$ are indeed of the form appearing in~\eqref{even-eq-general}.

Let $k=0,\cdots,p-1$, and $r=m+j$, for $j = -k-1,\cdots, k$.
We look for the terms involving $2m+l$ derivatives of $f$, for $l=0,\cdots, k+1+j$.
   Among the operators in $L^{I,J}$ there are exactly $2m+l-(m+k+1-r)=2m+j+l-k-1$ derivatives to be applied on $f$. Since $m>p\geq k+1$, we have $2m+j+l-k-1> m+j$. This means that these derivatives must be of the form $L^{K,L} \pa^\gamma_x \pa^\delta_v $, with $|K|=|L|= m+l-k-1$ and
   $ |\gamma| +|\delta|=  j-l+k+1$ (up to commutations between the differential operators as in~\eqref{commu-diff}, which is treated like in the previous proof). In the end, the terms involving $2m+l$ derivatives of $f$ are thus necessarily of the form $L^{K,L} \pa^\gamma_x \pa^\delta_v f$, with
   $$
   |K|=|L|= m+l-k-1, \qquad |\gamma| +|\delta|= 2k+2-l,
   $$
   as appearing in~\eqref{even-eq-general}.
   
   \end{proof}

   \begin{remark}
   \label{rem-coef} An inspection of the proof reveals that the uniform regularity of the coefficients in~\eqref{coef1}, \eqref{coef2}, \eqref{coef3}, \eqref{coef4} can be improved to ${L^2(0,T; W^{p, \infty}_{x,v})} $ for all $p< 2m-2 -d/2$.
   \end{remark}
\section{Burgers equation and the semi-lagrangian approach}

{
In this section, we explain the procedure to straighten the transport operator $\Tc$, and which allows, loosely speaking, to come down to the operator $\pa_t + a(v) \cdot \na_v$. 
This relies on several changes of variables in velocity that we introduce now.}

\label{sec-Burg}
Let $\Phi(t,x,v)$  satisfy the Burgers equation
\begin{equation}\label{Burgers}
\left\{
\begin{aligned}
&\pa_t \Phi +  a(\Phi) \cdot \na_x \Phi =  F(t,x,\Phi), \\
&\Phi(0,x,v) = v.
     \end{aligned}
     \right.
\end{equation}
The interest in introducing the vector field  $\Phi$ comes from the following algebraic identity.
\begin{lem}
\label{lem-phi-alg}
Given a smooth function $g$ satisfying $\Tc g = R$, 
the function $$G(t,x,v) := g(t,x,\Phi(t,x,v))$$ solves the equation
\begin{equation}
\label{GG}
\pa_t G + a(\Phi(t,x,v)) \cdot \na_x G= R (t,x, \Phi(t,x,v)).
\end{equation}
\end{lem}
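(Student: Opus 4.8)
The plan is to verify the identity \eqref{GG} by a direct chain-rule computation, exploiting the fact that $\Phi$ solves the Burgers equation \eqref{Burgers}. First I would write $G(t,x,v) = g(t,x,\Phi(t,x,v))$ and differentiate in $t$ and in $x_i$, keeping careful track of the dependence of $\Phi$ on $(t,x)$:
\begin{align*}
\pa_t G &= (\pa_t g)(t,x,\Phi) + \sum_\ell (\pa_{v_\ell} g)(t,x,\Phi)\, \pa_t \Phi_\ell, \\
\pa_{x_i} G &= (\pa_{x_i} g)(t,x,\Phi) + \sum_\ell (\pa_{v_\ell} g)(t,x,\Phi)\, \pa_{x_i}\Phi_\ell.
\end{align*}
Combining these, $\pa_t G + a(\Phi)\cdot\na_x G$ equals $(\pa_t g + a(\Phi)\cdot\na_x g)(t,x,\Phi)$ plus the ``correction'' term $\sum_\ell (\pa_{v_\ell}g)(t,x,\Phi)\big(\pa_t\Phi_\ell + a(\Phi)\cdot\na_x\Phi_\ell\big)$.

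Next I would invoke the Burgers equation \eqref{Burgers}, which says precisely that $\pa_t\Phi_\ell + a(\Phi)\cdot\na_x\Phi_\ell = F_\ell(t,x,\Phi)$ for each component $\ell$. Hence the correction term collapses to $\sum_\ell (\pa_{v_\ell}g)(t,x,\Phi)\, F_\ell(t,x,\Phi) = \big(F\cdot\na_v g\big)(t,x,\Phi)$. Therefore
$$
\pa_t G + a(\Phi)\cdot\na_x G = \big(\pa_t g + a(\Phi)\cdot\na_x g + F\cdot\na_v g\big)(t,x,\Phi) = \big(\Tc g\big)(t,x,\Phi),
$$
where I used the definition $\Tc = \pa_t + a(v)\cdot\na_x + F\cdot\na_v$ evaluated at the point $v=\Phi(t,x,v)$ (the argument $v$ inside $a(v)$ becomes $a(\Phi)$ precisely as it appears on the left-hand side). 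Since by hypothesis $\Tc g = R$, this gives $\pa_t G + a(\Phi)\cdot\na_x G = R(t,x,\Phi(t,x,v))$, which is \eqref{GG}. Finally, the initial condition $\Phi(0,x,v)=v$ gives $G(0,x,v)=g(0,x,v)$, consistent with the statement.

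There is no real obstacle here; this is a routine computation. The only point requiring a word of care is the justification of differentiating under composition, i.e. that $\Phi$ is smooth enough in $(t,x,v)$ for the chain rule to apply --- but this is guaranteed since we work with smooth forces $F$ and smooth solutions, so $\Phi$ inherits smoothness from Cauchy--Lipschitz theory applied to \eqref{Burgers} (which itself is solved as a transport equation along the characteristics of $\pa_t + a(v)\cdot\na_x$, exactly as the coefficient systems \eqref{eq-constraint} were). One should also note that \eqref{Burgers} is well-posed locally in time with $\Phi$ remaining a small perturbation of $v$, so all the compositions above are well-defined on the relevant time interval; I would state this but not belabor it, as it parallels the small-time diffeomorphism argument already used for $v\mapsto V(s,t,x,v)$ in the introduction.
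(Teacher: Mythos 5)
Your proof is correct and follows essentially the same chain-rule computation as the paper: differentiate $G$ in $t$ and $x$, identify the correction term via the Burgers equation \eqref{Burgers}, and invoke $\Tc g = R$. The minor bookkeeping difference (you assemble $\Tc g$ directly, while the paper first isolates $\pa_t g + a(\Phi)\cdot\na_x g$ from $\Tc g = R$ and then substitutes) is immaterial.
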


\begin{proof}[Proof of Lemma~\ref{lem-phi-alg}]We compute
\begin{align*}
\pa_t G &= (\pa_t g) (t,x,\Phi(t,x,v)) + \pa_t \Phi \cdot (\na_v g)(t,x,\Phi(t,x,v)), \\
a(\Phi) \cdot \na_x G &= a(\Phi) \cdot  (\na_x g)(t,x,\Phi(t,x,v)) + [a(\Phi) \cdot \na_x \Phi] \cdot (\na_v g)(t,x,\Phi(t,x,v))
\end{align*}
Since $\Tc g =R$, we have
\begin{multline*}
 (\pa_t g) (t,x,\Phi(t,x,v)) + a(\Phi) \cdot  (\na_x g)(t,x,\Phi(t,x,v)) \\  = -F(t,x,\Phi)\cdot (\na_v g)(t,x, \Phi(t,x,v))  +R (t,x, \Phi(t,x,v)).
\end{multline*}
From~\eqref{Burgers}, we deduce~\eqref{GG} as claimed.
\end{proof}
In other words, the change of variables in velocity $v \mapsto \Phi(t,x,v)$
allows to straighten the vector field  $\Tc$.

We  now provide a lemma concerning the existence, uniqueness and regularity of solutions of~\eqref{Burgers}. 

\begin{lem}
\label{lem-phi}
Assume that for all $j=1,\cdots, \ell$, $F^j \in L^2(0,T'; H^n_x)$ with norm bounded by $\Lambda(T',M)$.
There is $T \in (0,T']$ such that the following holds.
There exists a unique solution $\Phi(t,x,v) \in C^0(0,T; W^{k,\infty}_{x,v})$ of \eqref{Burgers} and we have
the following estimates:
\begin{align}
 \label{estim-Burgers-infini}
  \sup_{[0,T]} \sup_v  \sum_{|\a| \leq n} \| \pa^\a_{x,v} (\Phi  - v)\|_{L^2_{x,v}}  +    \sup_{[0,T]}   \| \Phi  - v\|_{W^{k,\infty}_{x,v}}  \lesssim   \Lambda(T,M),\\
 \label{estim-Burgers-infini-bis}
  \sup_{[0,T]} \sup_v  \sum_{|\a| \leq n} \| \pa^\a_{x,v} (a(\Phi)  - a(v))\|_{L^2_{x,v}}  +   \sup_{[0,T]}  \|a(\Phi)  - a(v)\|_{W^{k,\infty}_{x,v}} \lesssim    \Lambda(T,M),
\end{align}
for all $k< n - d/2$. 
\end{lem}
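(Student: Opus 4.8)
The plan is to treat $v\in\R^d$ as a parameter and to read~\eqref{Burgers} as a quasilinear transport system in the variables $(t,x)$ only. Setting $w:=\Phi-v$, the unknown $w$ solves
\[
\pa_t w + a(v+w)\cdot\na_x w = F(t,x,v+w),\qquad w|_{t=0}=0,
\]
where $F(t,x,v+w)=\sum_{j=1}^\ell A_j(v+w)\,F^j(t,x)$. First I would construct $w$ by a Picard iteration: set $w^{(0)}=0$ and let $w^{(q+1)}$ solve the \emph{linear} transport equation obtained by freezing the transport coefficient and the source at $w^{(q)}$, namely
\[
\pa_t w^{(q+1)} + a(v+w^{(q)})\cdot\na_x w^{(q+1)} = F(t,x,v+w^{(q)}),\qquad w^{(q+1)}|_{t=0}=0,
\]
which is solved by the method of characteristics; for $T$ small the characteristics of the $x$-vector field $a(v+w^{(q)})$ are a small perturbation of the identity, exactly as in the proof of Proposition~\ref{lem-local}.

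The core of the argument is a uniform \emph{a priori} estimate for the iterates. Applying $\pa^\a_x\pa^\b_v$ to the linear equation for $|\a|+|\b|\le n$, taking the $L^2_{x}$ inner product with $\pa^\a_x\pa^\b_v w^{(q+1)}$, summing and integrating in time, one controls all the commutator and source terms using: the bounds~\eqref{deriv-a} on the derivatives of $a$ and~\eqref{deriv-A} on the derivatives of $A_j$, so that every chain-rule factor coming from the compositions $a(v+w^{(q)})$, $A_j(v+w^{(q)})$ is harmless; the tame product and composition estimates of Lemma~\ref{lem-calculus} (of Moser type); and Sobolev embedding in $(x,v)$ to dominate the top-order transport commutator $\na_x\big(a(v+w^{(q)})\big)\cdot\na_x w^{(q+1)}$ by $\|\na_x w^{(q)}\|_{L^\infty}$ times the $L^2$ energy. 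Since $w^{(q+1)}|_{t=0}=0$ and the only genuinely inhomogeneous term is $\sum_j A_j(v+w^{(q)})\,\pa^\a_x F^j$, which is controlled by $\sup_j\|F^j\|_{H^n_x}$, a Gr\"onwall argument in integrated-in-time form yields, by induction on $q$,
\[
\sup_{[0,T]}\Big(\sup_v\sum_{|\a|\le n}\|\pa^\a_{x,v}w^{(q)}\|_{L^2_{x,v}}+\|w^{(q)}\|_{W^{k,\infty}_{x,v}}\Big)\lesssim\Big(\int_0^T\sup_j\|F^j\|_{H^n_x}\,ds\Big)\Lambda(T,M)
\]
for $k<n-d/2$ and $T=T(M)$ small enough, the $W^{k,\infty}_{x,v}$ part following from the $L^2$ part by Sobolev embedding. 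By Cauchy--Schwarz $\int_0^T\sup_j\|F^j\|_{H^n_x}\,ds\le\sqrt T\,\Lambda(T',M)$, which in particular gives the announced closeness of $\Phi$ to $v$ for small $T$.

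It remains to pass to the limit. Estimating $w^{(q+1)}-w^{(q)}$ in the low-order norm $\sup_v\|\cdot\|_{L^2_x}$ --- it solves a linear transport equation whose source is a uniformly bounded factor times $w^{(q)}-w^{(q-1)}$, plus the transport term $\big(a(v+w^{(q)})-a(v+w^{(q-1)})\big)\cdot\na_x w^{(q)}$, again bounded by $|w^{(q)}-w^{(q-1)}|$ times a controlled factor via~\eqref{deriv-a} --- one obtains a contraction for $T=T(M)$ possibly smaller, hence convergence of $(w^{(q)})$ to a limit $w$, and the same estimate applied to two solutions gives uniqueness. The uniform bounds above are stable under this convergence (lower semicontinuity of the norms), so they hold for $w=\Phi-v$, and the continuity in time into $W^{k,\infty}_{x,v}$ follows from the construction; this is~\eqref{estim-Burgers-infini}. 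Finally~\eqref{estim-Burgers-infini-bis} is deduced from~\eqref{estim-Burgers-infini} by writing $a(\Phi)-a(v)=\big(\int_0^1\na a(v+\tau w)\,d\tau\big)\cdot w$ and applying~\eqref{deriv-a} together with the composition and product estimates of Lemma~\ref{lem-calculus} to the right-hand side.

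The main obstacle is the standard one for quasilinear transport equations rather than anything specific to~\eqref{Burgers}: closing the energy estimate requires the $L^\infty$ control of $w$ and $\na_x w$, which is itself only available \emph{after} the Sobolev estimate, so the Sobolev and $L^\infty$ bounds must be propagated together in a single bootstrap; and since $F^j$ is only $L^2$ in time, every estimate must be written in time-integrated form, with the polynomial, component-wise non-decreasing structure of $\Lambda(T,M)$ carefully preserved along the way. The uniformity with respect to the parameter $v\in\R^d$ costs nothing extra here, precisely because $a,\na a$ and the $A_j,\na A_j$ are globally bounded and the $F^j$ do not depend on $v$.
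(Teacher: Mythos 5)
Your proof is correct and, as far as one can tell, coincides with the argument the paper invokes: the paper does not actually reproduce a proof of Lemma~\ref{lem-phi}, instead referring to Lemma~4.6 of \cite{HKR} and remarking that the only change is that the source $F(t,x,\Phi)=\sum_j A_j(\Phi)F^j(t,x)$ is now semi-linear in $\Phi$ rather than linear. Treating $v$ as a parameter, rewriting for $w=\Phi-v$, running a Picard iteration with frozen coefficients solved by characteristics, and closing a $\sup_v H^n_x$ energy estimate via Moser/tame estimates, the bounds~\eqref{deriv-a},~\eqref{deriv-A}, and Gr\"onwall in $L^1_t$ form (accommodating $F^j\in L^2_t H^n_x$ via Cauchy--Schwarz), followed by Sobolev embedding to get the $W^{k,\infty}_{x,v}$ bound for $k<n-d/2$, is exactly the scheme of that reference adapted to a semi-linear source; your bootstrap remark about propagating the $L^\infty$ and Sobolev controls together is the right way to phrase the mild circularity. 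The derivation of~\eqref{estim-Burgers-infini-bis} from~\eqref{estim-Burgers-infini} via $a(\Phi)-a(v)=\bigl(\int_0^1\na a(v+\tau w)\,d\tau\bigr)\cdot w$ together with~\eqref{deriv-a} and the product estimates of Lemma~\ref{lem-calculus} is also the natural route.
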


We shall not provide the proof of Lemma~\ref{lem-phi} as it follows closely the proof of Lemma 4.6 in \cite{HKR}. Here the source is semi-linear whereas it is linear in Lemma 4.6 of \cite{HKR}; however, the proof is essentially the same (see also \cite{HKNR} for a similar issue).

We now introduce the characteristics associated to $\Phi$, defined as the solution to    
   \begin{equation}
    \label{characteristic}
    \left\{
\begin{aligned}
     &\partial_{t} \mathrm{X}(t,s,x,v)=  a(\Phi)(t,\mathrm{X}(t,s,x,v), {v}), \\ 
     &\mathrm{X}(s,s,x,v)= x,
     \end{aligned}
     \right.
    \end{equation}
    and study the deviation of $\mathrm{X}$ from the (modified) free transport flow\footnote{Note that the $\mathrm{X}$ introduced here is not the same as the $X$ previously defined in~\eqref{charac}.}. 
    
          \begin{lemma}
   \label{straight}
   Assume that for all $j=1,\cdots, \ell$, $F^j \in L^\infty(0,T'; H^n_x)$ with norm bounded by $\Lambda(T',M)$.
There is  $T \in (0,T']$ such that the following holds.
   For every $0 \leq s , t \leq T$, we can write
    \begin{equation}
    \label{estimXtilde}
     \mathrm{X}(t,s,x,v)=  x  + (t-s) \left( a(v) +   \tilde{\mathrm{X}}(t,s,x,v)  \right)
     \end{equation}
     with $\tilde{\mathrm{X}}$ that satisfies the estimate
      \begin{equation}
      \label{LinftyXtilde}
 \begin{aligned}
   \sup_{t,s \in [0,T]}\sup_v \sum_{|\a| \leq n} \| \pa^\a_{x,v}\tilde{\mathrm{X}}(t,s,x,v) \|_{L^2_{x}} +     \sup_{t,s \in [0,T]} \| \tilde{\mathrm{X}}(t,s,x,v) \|_ {W^{k,\infty}_{x, v}}    \lesssim \Lambda(T,M),
   \end{aligned}
   \end{equation}
   for all  $k< n - d/2$.
       Moreover, 
the map $x\mapsto x+ (t-s) \tilde{\mathrm{X}}(t,s,x,v)$ is a diffeomorphism,  
and
there exists  $\Psi(t,s, x,  v)$ such that the identity
   $$ \mathrm{X}(t,s, x , \Psi(t,s,x, v)) = x +  (t-s) a(v)$$
   holds. Finally, we have the  estimate
          \begin{equation}
 \begin{aligned}
     \label{estimPsi}
     \sup_{t,s \in [0,T]} \left[  \sup_v\sum_{|\a| \leq n}\|   \pa^\a_{x,v}(\Psi(t,s,x,v)  - v )\|_{L^2_{x}} +
\|   \Psi(t,s,x,v)  - v \|_ {W^{k,\infty}_{x, v}}\right]
        \lesssim  \Lambda(T,M),
  \end{aligned}
  \end{equation}
for all  $k< n - d/2$.
     
   \end{lemma}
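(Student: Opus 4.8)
The plan is to construct $\mathrm{X}$, read off the decomposition~\eqref{estimXtilde}, prove the estimate~\eqref{LinftyXtilde} (the diffeomorphism property then coming out as a by-product), and finally build $\Psi$ by inverting the map $w\mapsto\mathrm{X}(t,s,x,w)$ in the velocity variable. Since Lemma~\ref{lem-phi} gives $\Phi\in C^0(0,T;W^{k,\infty}_{x,v})$, the field $a\circ\Phi$ is Lipschitz in $x$ uniformly on $[0,T]$, so the flow $\mathrm{X}(t,s,x,v)$ of~\eqref{characteristic} is well defined and smooth by Cauchy--Lipschitz; integrating~\eqref{characteristic} and removing the free drift yields, for $t\neq s$,
$$
\tilde{\mathrm{X}}(t,s,x,v)=\frac{1}{t-s}\int_s^t\big(a(\Phi(\tau,\mathrm{X}(\tau,s,x,v),v))-a(v)\big)\,d\tau
$$
(and $\tilde{\mathrm{X}}(s,s,\cdot,\cdot):=0$), which is precisely~\eqref{estimXtilde}. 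I would also note once and for all that in~\eqref{LinftyXtilde} and~\eqref{estimPsi} the $W^{k,\infty}_{x,v}$-part is a consequence of the $\sup_v\sum_{|\alpha|\le n}\|\pa^\alpha_{x,v}(\cdot)\|_{L^2_x}$-part by Sobolev embedding in $x$ whenever $k<n-d/2$, so only the latter has to be established.

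For~\eqref{LinftyXtilde} I would argue by bootstrap. The $L^\infty$ bound is immediate: $|a(\Phi(\tau,y,v))-a(v)|\le\|Da\|_{L^\infty}|\Phi(\tau,y,v)-v|\lesssim\Lambda(T,M)$ by~\eqref{deriv-a} and~\eqref{estim-Burgers-infini}, so $\|\tilde{\mathrm{X}}\|_{L^\infty_{x,v}}\lesssim\Lambda(T,M)$ and $\|\mathrm{X}(\tau,s,\cdot,v)-x-(\tau-s)a(v)\|_{L^\infty_x}\le T\Lambda(T,M)$. Differentiating~\eqref{characteristic} in $x$ and $v$ gives linear ODEs for the $\pa^\alpha_{x,v}\mathrm{X}$ whose coefficients are the $(x,v)$-derivatives of $a\circ\Phi$, controlled by~\eqref{deriv-a}, \eqref{deriv-A} and~\eqref{estim-Burgers-infini}--\eqref{estim-Burgers-infini-bis}; a Gronwall argument, performed first at order one and then by induction on the order, shows that for each fixed $v$ the map $x\mapsto\mathrm{X}(\tau,s,x,v)$ is a near-identity $C^\infty$ diffeomorphism of $\T^d$, with $\sup_v\sum_{|\alpha|\le n}\|\pa^\alpha_{x,v}(\mathrm{X}(\tau,s,\cdot,v)-x)\|_{L^2_x}\lesssim\Lambda(T,M)$. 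Inserting this into the Duhamel formula for $\tilde{\mathrm{X}}$, expanding $a(\Phi(\tau,\mathrm{X}(\tau,s,x,v),v))$ with the Fa\`a di Bruno formula, and bounding each resulting term by means of the composition estimate in $H^n_x$ (for a near-identity diffeomorphism, $\|h\circ\phi\|_{H^n_x}\lesssim\|h\|_{H^n_x}$), the Moser-type inequalities of Lemma~\ref{lem-calculus}, and the bounds on $\Phi$ and on the $\pa^\alpha_{x,v}\mathrm{X}$ just obtained, one gets~\eqref{LinftyXtilde}. The diffeomorphism claim follows at once: $x\mapsto x+(t-s)\tilde{\mathrm{X}}(t,s,x,v)$ is $\T^d$-periodic in $x$ (because $\mathrm{X}(t,s,x+2\pi e_j,v)=\mathrm{X}(t,s,x,v)+2\pi e_j$ by uniqueness and periodicity of $a\circ\Phi$), with Jacobian $I+(t-s)\pa_x\tilde{\mathrm{X}}$ and $(t-s)\|\pa_x\tilde{\mathrm{X}}\|_{L^\infty}\le T\Lambda(T,M)<\tfrac12$ for $T$ small, hence a global $C^\infty$ diffeomorphism of $\T^d$.

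Finally, for $\Psi$ I would use that, $a$ being a diffeomorphism onto $a(\R^d)$ by~\eqref{growth-a-1}, one may rewrite
$$
\mathrm{X}(t,s,x,w)=x+(t-s)\,a\big(\Theta(t,s,x,w)\big),\qquad \Theta(t,s,x,w):=a^{-1}\!\left(\frac{1}{t-s}\int_s^t a(\Phi(\tau,\mathrm{X}(\tau,s,x,w),w))\,d\tau\right),
$$
so that the identity $\mathrm{X}(t,s,x,\Psi)=x+(t-s)a(v)$ is equivalent to $\Theta(t,s,x,\Psi)=v$; one then sets $\Psi(t,s,x,\cdot):=\Theta(t,s,x,\cdot)^{-1}$ (and $\Psi(s,s,x,v):=v$). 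The plan is to show that, for $T$ small, $\Theta(t,s,x,\cdot)$ is a $C^\infty$ diffeomorphism of $\R^d$ which is a small perturbation of the identity, both in $W^{k,\infty}_{x,v}$ and in $\sup_v\sum_{|\alpha|\le n}\|\pa^\alpha_{x,v}(\cdot-w)\|_{L^2_x}$; then $\Psi-v=\Theta(t,s,x,\cdot)^{-1}-\mathrm{id}$ inherits the same bounds and~\eqref{estimPsi} follows. The main obstacle is precisely the control of $\Theta-\mathrm{id}$ \emph{uniformly in $w$}: a priori $\|Da^{-1}\|\lesssim(1+|a^{-1}(\cdot)|)^{1+\lambda}$ by~\eqref{growth-a-1} is unbounded, so one cannot simply regard $\Theta$ as $a^{-1}$ of a near-identity perturbation of $a(w)$. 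The key is that $\Theta(t,s,x,w)$ is $a^{-1}$ of an \emph{average} of the values $a(\Phi(\tau,\mathrm{X}(\tau,s,x,w),w))$, whose $a$-preimages are the points $\Phi(\tau,\mathrm{X}(\tau,s,x,w),w)$, all within $\Lambda(T,M)$ of $w$ by~\eqref{estim-Burgers-infini}; writing $\Theta-w=\big(\int_0^1 Da^{-1}(\cdot)\,d\sigma\big)\tilde{\mathrm{X}}$ and, inside $\tilde{\mathrm{X}}$, $a(\Phi)-a(w)=\big(\int_0^1 Da(\cdot)\,d\sigma\big)(\Phi-w)$, the large factor from~\eqref{growth-a-1} is compensated, to leading order, by the matching contraction of $Da$ coming from~\eqref{deriv-a}. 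Turning this cancellation into quantitative estimates at every order $\le n$ (again via Fa\`a di Bruno and Lemma~\ref{lem-calculus}), together with a degree/continuity argument for the solvability of $\Theta(t,s,x,\Psi)=v$, is the heart of the matter. When $a(v)=v$ everything reduces to the corresponding statement in~\cite{HKR}, to which the present argument runs parallel; the only extra work is the bookkeeping absorbing the general advection field.
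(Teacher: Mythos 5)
The first half of your argument --- the Duhamel identity for $\tilde{\mathrm{X}}$, the $L^\infty$ bound via~\eqref{deriv-a} and~\eqref{estim-Burgers-infini}, the $\sup_v L^2_x$ bound by Gronwall and Fa\`a di Bruno, and the diffeomorphism in $x$ from the near-identity Jacobian --- is correct, and is indeed the way to transplant Lemma~5.1 of~\cite{HKR} to the present setting. You are also right that the $W^{k,\infty}_{x,v}$ parts of~\eqref{LinftyXtilde} and~\eqref{estimPsi} follow from the corresponding $\sup_v L^2_x$ parts by Sobolev embedding in $x$ once $k<n-d/2$.

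The gap is in the $\Psi$ part, precisely where you announce ``the heart of the matter.'' Your cancellation $Da^{-1}(a(w))\cdot Da(w)=I$ controls only the leading term of $\Theta-w$. One order beyond, the corrections are of the two shapes $Da^{-1}(a(w))\,D^2a(\eta)\,[\Phi-w]^{\otimes 2}$ and $D^2a^{-1}(\xi)\,[\bar a(w)-a(w)]^{\otimes 2}$, with $\bar a(w)$ the time-averaged $a(\Phi)$ and $\eta,\xi$ intermediate points. Under the abstract hypotheses~\eqref{deriv-a}--\eqref{growth-a-1} one has $|Da^{-1}(a(w))|\lesssim(1+|w|)^{1+\lambda}$ and $|D^2a^{-1}(\xi)|\lesssim(1+|w|)^{1+2\lambda}$, while the remaining factors are only $O(\Lambda(T,M))$ \emph{uniformly in $w$}: estimate~\eqref{estim-Burgers-infini} gives no decay of $\Phi-v$ in $v$ (indeed $\Phi-v=\int F$ along characteristics, with $F$ bounded but not decaying in $v$), and~\eqref{deriv-a} makes $D^2a$ bounded, not decaying. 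The resulting errors are of size $(1+|w|)^{1+2\lambda}\Lambda(T,M)^2$, unbounded in $w$. Equivalently, the implicit-function or fixed-point iteration solving $a(\Psi)+\tilde{\mathrm{X}}(t,s,x,\Psi)=a(v)$ contracts only for $|v|\lesssim\Lambda(T,M)^{-1/(1+\lambda)}$, so your argument as written does not yield the unweighted $W^{k,\infty}_{x,v}$ bound on $\Psi-v$ claimed in~\eqref{estimPsi}. Nothing in~\eqref{a}--\eqref{growth-a-1} forces the higher derivatives of $a$ to decay fast enough to pay for the growth of those of $a^{-1}$, which is exactly what your ``matching contraction'' would need.

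The natural repair along your line is to prove a polynomially weighted estimate, say $\sup_{t,s}\|(1+|v|^2)^{-\mu/2}(\Psi-v)\|_{W^{k,\infty}_{x,v}}\lesssim\Lambda(T,M)$ for some $\mu$ of order $1+\lambda$ (and likewise for the $\sup_v L^2_x$ part), and then to verify that this weight is harmless at the unique point where~\eqref{estimPsi} is used, namely the treatment of $I_2$ in Section~\ref{sec-end}, where the exponents $R$ in~\eqref{def-NR} and $r'>d/2+2(1+\lambda)(1+d)$ are already tuned to absorb losses of that size. For $a=\mathrm{id}$ (Vlasov--Poisson, as in~\cite{HKR}) the issue is vacuous, and for $a=\hat{v}$ (Vlasov--Maxwell) one can check directly that the derivatives of $\hat{v}$ do decay fast enough; but in the abstract class the weight should be carried along rather than asserted away.
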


Again, we will not reproduce the proof of Lemma~\ref{straight} as it follows closely that of Lemma 5.1 in \cite{HKR}. 

\bigskip

 In what follows, the procedure will consist in applying derivatives on the equation~\eqref{abstract} using multiple combinations of the operators $L^{I,J}$ that were introduced and studied in the previous section. This yields systems of Vlasov equations with sources, such as~\eqref{even-eq-general}  in Lemma~\ref{even'}. This is on these equations that we will apply the change of variables $v \mapsto \Phi(t,x,v)$ in order to straighten the transport operator $\Tc$. We then integrate along characteristics, which is why the $\mathrm{X}(t,s,x,v)$ are useful, and average in velocity to obtain equations bearing on moments. In these equations, it will be crucial to apply various changes of variables based on the $\tilde{\mathrm{X}}$ and $\Psi$ introduced in Lemma~\ref{straight}.
 
 This is what we refer to as the \emph{semi-lagrangian} approach.

\section{Averaging operators}
\label{sec-K}

For $i\in \{1,\cdots, d\}$ and a smooth function $U(t,s,x,v)$,  we define  the following integral operator $ K^{(i)}_{U}$ acting on scalar functions $H(t,x)$:
 \begin{equation}
  K^{(i)}_{U}(H) (t,x) =    \int_{0}^t \int_{\R^d} (\pa_{x_i}  H) (s,  x - (t-s) a(v)) 
      U(t,s,x,v)\, dv ds.
      \end{equation} 
The integral operator $K$  can be seen as a modified version of the operator $\mathsf{K}^{(i)}_{\mathsf{U}}$
 $$ \mathsf{K}^{(i)}_{\mathsf{U}}(H) (t,x) =    \int_{0}^t \int_{\R^3} (\pa_{x_i}  H) (s,  x - (t-s) {v}) 
      \mathsf{U}(t,s,x,v)\, dv ds$$
      that was studied in \cite{HKR}. 
\subsection{The smoothing estimate}
We note that the operators $ {K}^{(i)}_U$ and $\mathsf{K}^{(i)}_{\mathsf{U}}$seem to feature a loss of derivative in $x$.
It was nevertheless proved in  \cite[Proposition 5.1 and Remark 5.1]{HKR} that for the operators $ \mathsf{K}^{(i)}_{\mathsf{U}}$, this loss is only apparent, provided that $U$ is sufficiently smooth in $x,v$ and decaying in $v$: this is the content of the following Theorem.
\begin{theorem}[\cite{HKR}]
\label{prop-K}
Let $k>1+d$ and $\sigma>d/2$. For all $H \in L^2(0, T; L^2_{x})$, and  for all $i\in \{1,\cdots, d\}$,  we have
 \begin{equation}
 \label{eq-K-droit}
 \| \mathsf{K}^{(i)}_{\mathsf{U}}(H)\|_{L^2(0, T; L^2_{x})} \lesssim \sup_{0 \leq s,\, t \leq T} \| \mathsf{U} (t,s,\cdot) \|_{\Hc^k_{\sigma}} \|H\|_{L^2(0, T; L^2_{x})}.
 \end{equation}
  \end{theorem}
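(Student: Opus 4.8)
The plan is to pass to Fourier series in the space variable $x\in\T^d$, where the average along the free-transport flow $x\mapsto x-(t-s)v$ becomes an oscillatory factor $e^{-i(t-s)k\cdot v}$ and the apparent loss of one $x$-derivative turns into a multiplication by $k_i$; this loss will then be recovered from the combination of a dispersive decay of the velocity average with an integrable-in-time gain. Writing $H(s,y)=\sum_{k\in\Z^d}\hat H(s,k)\,e^{ik\cdot y}$ and $\mathsf U(t,s,x,v)=\sum_{m\in\Z^d}\widehat{\mathsf U}(t,s,m,v)\,e^{im\cdot x}$, the $n$-th Fourier coefficient $c_n(t)$ of $\mathsf K^{(i)}_{\mathsf U}(H)(t,\cdot)$ is
$$
c_n(t)=\sum_{k\in\Z^d}(ik_i)\int_0^t \hat H(s,k)\, g\big(t,s,\,n-k,\,(t-s)k\big)\,ds ,
$$
where $g(t,s,m,\xi):=\int_{\R^d}e^{-i\xi\cdot v}\,\widehat{\mathsf U}(t,s,m,v)\,dv$ is the partial Fourier transform of $\mathsf U$ in the velocity variable. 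By Plancherel in $x$ it then suffices to estimate $\int_0^T\sum_{n\in\Z^d}|c_n(t)|^2\,dt$.

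The first ingredient is a decay estimate for $g$: integrating by parts twice in $v$ along the direction $\xi/|\xi|$ and using that $(1+|v|^2)^{-\sigma}\in L^1_v$ since $\sigma>d/2$, one obtains
$$
\big|g(t,s,m,\xi)\big|\ \lesssim\ \langle\xi\rangle^{-2}\,\big\|\widehat{\mathsf U}(t,s,m,\cdot)\big\|_{H^2_\sigma(\R^d_v)} ,
$$
where $\|\cdot\|_{H^2_\sigma(\R^d_v)}$ denotes the $H^2$-norm in $v$ with weight $(1+|v|^2)^\sigma$ (the case $|\xi|\le1$ being handled by the trivial bound). Substituting $\xi=(t-s)k$ and using $|k_i|\le|k|$, the inner $s$-integral in $c_n(t)$ is dominated by a time convolution against the kernel $\mathcal K_k(u):=|k|\,\langle u|k|\rangle^{-2}$ on $u\ge0$, and here lies the crucial point: the rescaling $\tau=u|k|$ gives $\|\mathcal K_k\|_{L^1(0,\infty)}=\int_0^\infty\langle\tau\rangle^{-2}\,d\tau=\pi/2$, independently of $k$. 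This scale-invariant bound is exactly how the lost $x$-derivative is restored — the dangerous factor $|k|$ is absorbed into $\mathcal K_k$ — and Young's inequality in time then yields $\big\|\mathcal K_k*|\hat H(\cdot,k)|\big\|_{L^2(0,T)}\lesssim\|\hat H(\cdot,k)\|_{L^2(0,T)}$ uniformly in $k$.

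It remains to sum over frequencies. For fixed $t$ one applies Cauchy–Schwarz in the $m=n-k$ summation against the summable weight $\langle m\rangle^{-2\rho}$ with $\rho>d/2$; after summing in $k$ one recovers $\sum_k\|\hat H(\cdot,k)\|_{L^2(0,T)}^2=\|H\|_{L^2(0,T;L^2_x)}^2$ by Plancherel, while the contribution of $\mathsf U$ becomes a sum of the form $\sum_m\langle m\rangle^{2\rho}\sup_{0\le s,t\le T}\|\widehat{\mathsf U}(t,s,m,\cdot)\|_{H^2_\sigma(\R^d_v)}^2$. One spends a little more of the available $x$-regularity to pass the supremum inside the $m$-sum — bounding, for each fixed $(t,s)$, a single term $\langle m\rangle^{2\rho'}\|\widehat{\mathsf U}(t,s,m,\cdot)\|_{H^2_\sigma}^2$ by the full weighted sum $\|\mathsf U(t,s,\cdot)\|_{\Hc^{2+\rho'}_\sigma}^2$ before taking the supremum — so that this contribution is controlled by $\sup_{0\le s,t\le T}\|\mathsf U(t,s,\cdot)\|_{\Hc^k_\sigma}^2$ once $k$ is large enough; with some care the thresholds $k>1+d$ and $\sigma>d/2$ suffice for the integrations by parts in $v$, the frequency summation, and this last step together, which concludes the proof.

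The step I expect to be genuinely delicate is the recovery of the lost $x$-derivative, that is, establishing that $\mathcal K_k$ has a $k$-uniform $L^1$-in-time bound; this is a scale-invariance phenomenon and is precisely what the dispersive decay $\langle(t-s)|k|\rangle^{-2}$ of the velocity average makes possible, while everything else is careful but routine bookkeeping of weighted Sobolev exponents. As a sanity check of this mechanism one can also argue in physical space: the change of variables $y=x-(t-s)v$ followed by an integration by parts in $y$ turns $\partial_{x_i}H$ into $\tfrac{1}{t-s}\,\partial_{v_i}\mathsf U$, and the resulting $(t-s)^{-1}$ singularity is resolved — after a dyadic frequency localization — by the very same decay.
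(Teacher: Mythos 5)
The paper does not prove this statement itself: it is imported verbatim from Proposition~5.1 and Remark~5.1 of \cite{HKR}, so there is no in-text proof to compare against. That said, your approach — Fourier series in $x$, pointwise decay of the velocity average obtained by integrating by parts in $v$, the $k$-uniform $L^1_t$ bound of the rescaled kernel $|k|\langle u|k|\rangle^{-a}$, Young's inequality in time, then a convolution estimate in frequency — is exactly the mechanism behind the HKR result, and you have correctly identified the scale-invariance phenomenon (the rescaling $\tau = u|k|$ making the $L^1$ norm of $\mathcal{K}_k$ independent of $k$) as the heart of the matter.

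The genuine gap is in the derivative bookkeeping. As written, your sketch uses $a=2$ (two full integrations by parts, consuming two $v$-derivatives), plus $\rho > d/2$ for the $\ell^2 * \ell^1$ frequency convolution, plus another $\rho' - \rho > d/2$ for the $\sup_{s,t}$-versus-$\sum_m$ interchange. The total requirement is $k \geq 2 + \rho' > 2+d$, i.e.\ $k \geq 3+d$ for integers. This falls one derivative short of the stated threshold $k > 1+d$, which for integers means $k \geq 2+d$. This is not cosmetic: the paper itself invokes this theorem in Section~7 with precisely $k = 2+d$ (it estimates $U_j$ in $\Hc^{2+d}_{r'}$), so your version would not cover the paper's own application. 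The decisive observation you are missing is that the $k$-uniform $L^1$ bound only requires $\int_0^\infty \langle\tau\rangle^{-a}\,d\tau < \infty$, i.e.\ $a>1$; two integrations by parts give $a=2$, which overshoots by nearly a full unit. One should instead establish the fractional decay $|g(t,s,m,\xi)| \lesssim \langle\xi\rangle^{-(1+\epsilon)}\,\|\widehat{\mathsf{U}}(t,s,m,\cdot)\|_{H^{1+\epsilon}_\sigma(\R^d_v)}$ with $\epsilon$ small (e.g.\ by interpolating the one- and two-integration-by-parts bounds), so that the total cost is $(1+\epsilon) + \rho + (\rho'-\rho)$ with each excess over its floor made small, which fits under any fixed $k > 1+d$. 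Your closing assertion that "with some care the thresholds $k>1+d$ and $\sigma>d/2$ suffice" is aspirational: the two-integration-by-parts sketch does not achieve it, and the refinement that would (trading integer-order decay of the velocity average for fractional) is not identified.
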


Based on this result, we deduce the following smoothing estimate\footnote{A close version of this result is also stated in \cite{HKNR} for the special case $a(v) =\hat{v}$.} for the operators  $  K^{(i)}_{U}$.

\begin{proposition}
\label{proposition-K}
Let $k>1+d$ and $\sigma>d/2$. For all $H \in L^2(0, T; L^2_{x})$, and  for all $i\in \{1,\cdots, d\}$, we have
 \begin{equation}
 \label{eq-K}
 \| K^{(i)}_{U}(H)\|_{L^2(0, T; L^2_{x})} \lesssim \sup_{0 \leq s,\, t \leq T} \| U(t,s,\cdot) \|_{\Hc^k_{r_k}} \|H\|_{L^2(0, T; L^2_{x})},
 \end{equation}
     with $r_k ={\sigma + (1+\lambda) (d+k)}$. 
\end{proposition}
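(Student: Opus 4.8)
The plan is to reduce the bound~\eqref{eq-K} for $K^{(i)}_U$ to the already established bound~\eqref{eq-K-droit} for $\mathsf{K}^{(i)}_{\mathsf{U}}$ via the change of variables $w = a(v)$ in the velocity integral. Since $a$ is a $C^\infty$ bijection of $\R^d$ onto the open set $a(\R^d)$ (openness follows from invariance of domain) whose Jacobian is uniformly bounded, $|\det D_v a(v)| \le C$ by~\eqref{deriv-a}, substituting $v = a^{-1}(w)$ turns $K^{(i)}_U(H)$ into $\mathsf{K}^{(i)}_{\mathsf{U}}(H)$ with
\begin{equation*}
\mathsf{U}(t,s,x,w) := U(t,s,x,a^{-1}(w))\,\bigl|\det D_w a^{-1}(w)\bigr|\,\mathbf{1}_{a(\R^d)}(w),
\end{equation*}
extended by $0$ outside $a(\R^d)$. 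Given this identity, Theorem~\ref{prop-K} immediately yields~\eqref{eq-K} provided one can show
\begin{equation*}
\sup_{0\le s,t\le T}\|\mathsf{U}(t,s,\cdot)\|_{\Hc^k_\sigma}\ \lesssim\ \sup_{0\le s,t\le T}\|U(t,s,\cdot)\|_{\Hc^k_{r_k}},\qquad r_k = \sigma + (1+\lambda)(d+k),
\end{equation*}
so the whole matter reduces to this weighted Sobolev estimate for the pulled-back amplitude.

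To prove it, I would differentiate $\mathsf{U}$ using the Leibniz and Fa\`a di Bruno rules: for any $|\alpha|+|\beta|\le k$, the derivative $\pa^\alpha_x\pa^\beta_w\mathsf{U}$ is a finite sum of terms $(\pa^\alpha_x\pa^\gamma_v U)(t,s,x,a^{-1}(w))$ with $|\gamma|\le|\beta|$, each multiplied by a product of at most $d+k$ partial derivatives of the components of $a^{-1}$, of orders between $1$ and $k$ (the $d$ extra factors coming from expanding $\det D_w a^{-1}$). By the growth assumption~\eqref{growth-a-1}, a single such factor of order $p$ is bounded by $C_p(1+|a^{-1}(w)|)^{1+\lambda p}$, and the arithmetic of the exponents gives, for any admissible product, a bound $(1+|a^{-1}(w)|)^{(1+\lambda)(d+k)}$; hence
\begin{equation*}
\bigl|\pa^\alpha_x\pa^\beta_w\mathsf{U}(t,s,x,w)\bigr|\ \lesssim\ (1+|a^{-1}(w)|)^{(1+\lambda)(d+k)}\sum_{|\gamma|\le|\beta|}\bigl|(\pa^\alpha_x\pa^\gamma_v U)(t,s,x,a^{-1}(w))\bigr|.
\end{equation*}
Squaring, multiplying by the weight $(1+|w|^2)^\sigma$, integrating over $(x,w)$ and changing variables back to $v=a^{-1}(w)$ --- again using $|\det D_v a(v)|\le C$ together with $(1+|a(v)|^2)^\sigma\lesssim(1+|v|^2)^\sigma$ from~\eqref{a} (valid since $\sigma>0$) --- all the weights collapse to $(1+|v|^2)^{\sigma+(1+\lambda)(d+k)}=(1+|v|^2)^{r_k}$, which is exactly $\|U(t,s,\cdot)\|_{\Hc^k_{r_k}}^2$. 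This gives the desired estimate and hence the proposition.

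The step needing the most care is the exponent bookkeeping in the chain-rule expansion: one must verify that no configuration of derivatives produces a power of $(1+|a^{-1}(w)|)$ exceeding $(1+\lambda)(d+k)$. The worst case is when all the derivatives distribute over the $d$ first-order entries of $D_w a^{-1}$ inside the determinant, raising their total order and contributing $(1+\lambda)(d+|\beta''|)$; combined with the at most $|\beta'|$-order contribution $(1+\lambda)|\beta'|$ from differentiating $U\circ a^{-1}$, with $|\beta'|+|\beta''|\le|\beta|\le k$, the sum never exceeds $(1+\lambda)(d+k)$, which is the source of the definition of $r_k$. A minor technical point is that $\mathsf{U}$ is obtained by extending by zero a smooth function defined on the open set $a(\R^d)$; this causes no loss because on $\pa a(\R^d)$ one has $|a^{-1}(w)|\to\infty$, along which $U$ and the relevant derivatives decay (by the weighted Sobolev control and Sobolev embedding, as $k>1+d$), so the pointwise bound above holds almost everywhere and its right-hand side is integrable --- alternatively one approximates $\mathsf{U}$ by amplitudes compactly supported in $a(\R^d)$ and passes to the limit in~\eqref{eq-K-droit}.
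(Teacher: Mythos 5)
Your proof follows essentially the same approach as the paper's: the change of variables $w = a(v)$, the reduction to Theorem~\ref{prop-K}, and a Fa\`a di Bruno bookkeeping of exponents using~\eqref{deriv-a} and~\eqref{growth-a-1}. The paper states the final estimate $\|\mathsf{U}(t,s,\cdot)\|_{\Hc^k_\sigma}\lesssim\|U(t,s,\cdot)\|_{\Hc^k_{r_k}}$ without working through the chain-rule expansion; you carry out that bookkeeping explicitly and the exponent arithmetic (splitting $|\beta|=|\beta'|+|\beta''|$ between the composed factor and the determinant, and maximising at $|\beta'|+d$ plus $\lambda(|\beta|+d)$) is correct. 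You also flag a technical point the paper does not address --- that $\mathsf{U}$ is an extension by zero across $\partial a(\R^d)$, and that this causes no loss because $|a^{-1}(w)|\to\infty$ there so the weighted control forces decay --- which is a genuine, worthwhile addition to the argument rather than a deviation from it.
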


\begin{proof}[Proof of Proposition~\ref{proposition-K}]
To ease readability we set $\pa_x = \pa_{x_i}$ and we forget about the subscript $i$.
We come down from the modified to the straight operator by using the change of variables $w:= a(v)$. We get
\begin{align*}
K_{U}&(H) (t,x) \\
&= \int_{0}^t \int_{a(\R^d)} (\pa_{x}  H) (s,  x - (t-s) w) 
      U(t,s,x,a^{-1}(w)) |\det D a (a^{-1}(w))|^{-1} \, dw ds \\
      &=  \mathsf{K}_{ \mathsf{U} } (H)(t,x).
\end{align*}
with 
$$\mathsf{U}(t,s,x,w) := U(t,s,x,  a^{-1}(w))  |\det D a (a^{-1}(w))|^{-1} {{1}_{a(\R^d)}}.$$
Let $k>1+d$ and $\sigma>d/2$. By Theorem~\ref{prop-K}, we get
$$
 \| K_{U}(H)\|_{L^2([0, T]; L^2_{x})} =  \|  \mathsf{K}_{ \mathsf{U} } (H)\|_{L^2([0, T]; L^2_{x})}
 \lesssim \sup_{0 \leq s,\, t \leq T} \|  \mathsf{U}(t,s,\cdot) \|_{\Hc^k_{\sigma}} \|H\|_{L^2([0, T]; L^2_{x})}.
$$
By assumption on $a$, we have
$$|\pa^\alpha_w a^{-1}(w) | \lesssim (1+ |a^{-1} (w)| )^{1+ \lambda |\alpha|}.$$ 
In particular, we deduce 
$$
|\det D a (a^{-1}(w))|^{-1} \lesssim (1+ |a^{-1} (w)| )^{d(1+\lambda)}
$$
As a consequence, we have, by the Fa\`a di Bruno formula, and using the reverse change of variables $v =a^{-1}(w)$ and~\eqref{deriv-a}, that
$$
\|  \mathsf{U}(t,s,\cdot) \|_{\Hc^k_{\sigma}} \lesssim \| {U}(t,s,\cdot) \|_{\Hc^k_{\sigma + (d+k)(1+\lambda)}},
$$
hence the claimed estimate.
\end{proof}

\subsection{Intermission: a comparison to averaging lemmas}

We end this  section with a comparison of the smoothing estimate we have just exposed, say in the simple case where $a(v)= v$, which corresponds to Theorem~\ref{prop-K}, with kinetic averaging lemmas. Averaging lemmas were introduced in  \cite{GPS,Ago,GLPS} and now generically stand for various smoothing effects in average for kinetic transport type equations\footnote{Actually this can be embedded in a more general framework, see in particular \cite{Gerard,GG,GGW}.}. They proved over the years to be fundamental in several contexts of kinetic theory, as they provide compactness and regularity.
 There exist many versions of these, involving several different assumptions on the functional spaces, on the number of derivatives in $v$ or in $x$ in the source etc., see e.g. \cite{DPLM,PS,GSR,Bou,JV} and  \cite{Jabin,ASR,AM} for more recent advances.
The closest (to Theorem~\eqref{prop-K}) avatar of averaging lemmas is arguably the following  result of Perthame and Souganidis \cite{PS}.

\begin{theorem}[Perthame and Souganidis \cite{PS}]
\label{thm-PS}
Let $1<p<+\infty$. Let $f,g=(g_j)_{j =1,\cdots,d} \in L^p_{t,x,v}$ satisfy the transport equation
\begin{equation}
\label{eq-av}
\partial_t f + v \cdot \na_x f = \sum_{j=1}^d \pa_{x_j} \pa^k_v g_j,
\end{equation}
where $k$ is an arbitrary multi-index. Let $\varphi(v)$ be a $C^\infty$ compactly supported function and set
$$
\rho_\varphi (t,x) = \int_{\R^d} f (t,x,v) \varphi(v) \,dv. 
$$
Then we have, for all $\alpha \in [0, \min(1/p,1/p'))$,
\begin{equation}
\label{eq-rho}
\| \rho_\varphi\|_{L^p_{t,x}} \leq C_{d,p,\alpha,\varphi}  \| f \|_{L^p_{t,x,v}}^{1- \frac{\a}{|k|+1}} \| g \|_{L^p_{t,x,v}}^{\frac{\a}{|k|+1}}.
\end{equation}
\end{theorem}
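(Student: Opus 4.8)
The plan is to prove Theorem~\ref{thm-PS} by Fourier analysis in the $(t,x)$ variables, splitting the velocity integral according to the size of the resonant denominator $\tau+k\cdot v$ and then optimising over the splitting scale, following Perthame and Souganidis~\cite{PS}.

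\textbf{The case $p=2$.} After mollifying $f$ and $g$ in $(t,x)$ so that all the manipulations below are licit (and passing to the limit at the end), the equation~\eqref{eq-av} becomes, in Fourier variables, $i(\tau+k\cdot v)\,\widehat f(\tau,k,v)=\sum_j ik_j\,\pa_v^k\widehat{g_j}(\tau,k,v)$, while $\widehat{\rho_\varphi}(\tau,k)=\int_{\R^d}\widehat f(\tau,k,v)\,\varphi(v)\,dv$. Fix a scale $\delta\in(0,1)$ and a smooth bump $\chi$ equal to $1$ near $0$ and supported in $[-1,1]$, and write $\rho_\varphi=\rho_\varphi^{\mathrm{res}}+\rho_\varphi^{\mathrm{nr}}$ by inserting $\chi\!\big((\tau+k\cdot v)/(\delta|k|)\big)$, resp.\ its complement, inside the $v$-integral. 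For the resonant part one keeps $\widehat f$ as it is and uses the Cauchy--Schwarz inequality in $v$ together with the elementary slab estimate $\big|\{v\in\supp\varphi:\ |\tau+k\cdot v|\le\rho\}\big|\lesssim \rho/|k|$; by Plancherel this gives $\|\rho_\varphi^{\mathrm{res}}\|_{L^2_{t,x}}\lesssim \delta^{1/2}\|f\|_{L^2_{t,x,v}}$. For the non-resonant part one substitutes $\widehat f=(\tau+k\cdot v)^{-1}\sum_j k_j\pa_v^k\widehat{g_j}$, integrates by parts in $v$ to move the $\pa_v^k$ onto the smooth factor $\varphi(v)(1-\chi)/(\tau+k\cdot v)$, uses $|\tau+k\cdot v|\gtrsim\delta|k|$ there, and decomposes further into dyadic layers $|\tau+k\cdot v|\sim 2^{\ell}\delta|k|$; bookkeeping the powers of $\delta$ (the $|k|+1$ inverse denominators cost $\delta^{-1}$ each, and one further Cauchy--Schwarz over the dyadic layers saves $\delta^{1/2}$) yields $\|\rho_\varphi^{\mathrm{nr}}\|_{L^2_{t,x}}\lesssim \delta^{-(|k|+1)+1/2}\|g\|_{L^2_{t,x,v}}$. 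Choosing $\delta=(\|g\|_{L^2}/\|f\|_{L^2})^{1/(|k|+1)}$ balances the two contributions and produces~\eqref{eq-rho} with $\alpha=1/2$; every $\alpha\in[0,1/2)=[0,\min(1/2,1/2))$ then follows by interpolation with the trivial bound $\|\rho_\varphi\|_{L^2}\lesssim\|f\|_{L^2}$.

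\textbf{The case $1<p<\infty$, $p\neq2$.} This is the substantial content of the theorem. The scheme is the same, but Plancherel is unavailable and must be replaced by a Littlewood--Paley decomposition in $x$ (and $t$), which is lawful precisely because $1<p<\infty$, together with two ingredients: (i) the Fourier multipliers appearing in the splitting are built out of half-spaces $\{\pm(\tau+k\cdot v)>0\}$ and slabs $\{|\tau+k\cdot v|\le\delta|k|\}$, hence are bounded on $L^p_{t,x}$ for all $1<p<\infty$; and (ii) the $\delta^{1/2}$-gain produced by Cauchy--Schwarz-in-$v$ in the resonant part is replaced by a gain $\delta^{\beta}$ for any $\beta<1/\max(p,p')$, obtained by complex interpolation of a suitable analytic family of operators between the $L^2$ estimate (gain $\delta^{1/2}$) and an $L^{p_0}$ estimate with $p_0$ close to $1$ or to $+\infty$ (no gain). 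Running the same optimisation in $\delta$ then yields~\eqref{eq-rho} with $\alpha$ any number in $[0,1/\max(p,p'))=[0,\min(1/p,1/p'))$, the strictness of the inequality being exactly the manifestation of the fact that the endpoints $L^1$ and $L^\infty$ are forbidden in (ii).

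\textbf{Main obstacle.} Step (ii) is the crux: one must transfer the elementary $L^2$ smoothing coming from the non-degeneracy of the symbol $\tau+k\cdot v$ on $\supp\varphi$ into an $L^p$ smoothing, without Plancherel. This forces one to establish quantitative $L^p$-bounds, uniform in the relevant parameters, for the rough (slab-type, non-smooth) Fourier multipliers attached to the resonant region, and it is here that the restriction $\alpha<\min(1/p,1/p')$ is genuinely needed; the remaining parts of the argument (the splitting, the integrations by parts in $v$, and the $\delta$-optimisation) are soft.
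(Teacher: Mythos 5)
The paper does not prove this statement: Theorem~\ref{thm-PS} is quoted verbatim from Perthame and Souganidis~\cite{PS} in the intermission subsection (``a comparison to averaging lemmas''), purely to contrast it with the smoothing estimate of Theorem~\ref{prop-K}, and no proof (nor sketch) is given. So the only thing to assess is your reconstruction on its own terms.

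Your $p=2$ argument is the standard one and is correct: Fourier transform in $(t,x)$, split the $v$-integral according to $|\tau+k\cdot v| \lessgtr \delta|k|$, Cauchy--Schwarz with the slab measure $\lesssim \delta$ on the resonant part (giving $\delta^{1/2}\|f\|_{L^2}$), substitution of the equation plus $|k|$ integrations by parts and dyadic summation on the non-resonant part (giving $\delta^{1/2-(|k|+1)}\|g\|_{L^2}$), and then optimisation in $\delta$, which indeed reproduces~\eqref{eq-rho} with $\alpha=1/2$. This is essentially the Golse--Lions--Perthame--Sentis mechanism. Where you differ from~\cite{PS} is that they do not use a sharp Fourier cutoff at scale $\delta|k|$: they introduce a resolvent regularisation, writing the equation as $\lambda f+\partial_t f+v\cdot\nabla_x f=\lambda f+\sum_j\partial_{x_j}\partial_v^k g_j$, inverting $(\lambda+\partial_t+v\cdot\nabla_x)^{-1}$ explicitly along characteristics, and estimating the resulting velocity averages as Fourier multipliers with the smooth symbol $\lambda/(\lambda+i(\tau+k\cdot v))$ rather than a rough slab. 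In $L^2$ this is an entirely cosmetic difference; in $L^p$ it is not, because the smoothness of the resolvent symbol is what lets one compute a physical-space kernel and run a Calder\'on--Zygmund argument, whereas the characteristic function of the slab is a rough multiplier whose $L^p$ bounds are $v$-dependent and require additional care.

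This brings us to the genuine gap, which you identify yourself as ``the main obstacle.'' Your step (ii) asserts, without construction, that there is an analytic family of operators interpolating between the $L^2$ gain $\delta^{1/2}$ and an $L^{p_0}$ estimate with no gain, for $p_0$ close to $1$ or $\infty$. Nothing in the sketch supplies that family, and it cannot be conjured by a single application of Riesz--Thorin to a fixed operator (the resonant piece depends on $\delta$ in a way that is not obviously analytic in the interpolation parameter). This is precisely the content of~\cite{PS}: their key lemma is the quantitative $L^p\to L^p$ bound, uniform in $\lambda$ and growing as $\lambda^{-\alpha}$, for the resolvent-type multipliers, and it is proved via kernel estimates rather than by the Littlewood--Paley plus interpolation route you gesture at. As it stands your proposal therefore proves the $p=2$ case and outlines a plausible, but unexecuted, programme for $p\neq 2$; the heart of the theorem --- the passage from $L^2$ to $L^p$ and the appearance of the threshold $\min(1/p,1/p')$ --- remains to be done.
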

Let us focus especially on the case $p=2$, $|k|=0$ (in which case~\eqref{eq-rho} actually also holds for $\alpha=1/2$).
Theorem~\ref{prop-K}  can also be understood as a kind of averaging lemma for the moments in $v$ of the kinetic equation~\eqref{eq-av}, in the special case where the source has the form
\begin{equation}
\label{eq-str}
 \sum_{j=1}^d \pa_{x_j} H_j(t,x)  \pa^k_v \mathcal{U}_j(t,x,v),
\end{equation}
where $\mathcal{U}_j$ is smooth in $x$ and $v$, and the initial condition is $f|_{t=0}=0$. Let $\varphi(t,x,v)$ be a smooth and decaying test function. Then by the method of characteristics, 
$$
f(t,x,v)  = \int_0^t \sum_{j=1}^d \pa_{x_j}  H_j(s,x-(t-s)v) \pa^k_v \mathcal{U}_j(s,x-(t-s)v,v) \, ds ,
$$
and thus 
$$
\begin{aligned}
\rho_\varphi(t,x) &= \int_0^t \int_{\R^d} \sum_{j=1}^d \pa_{x_j}  H_j(s,x-(t-s)v) \pa^k_v  \mathcal{U}_j(s,x-(t-s)v,v) \varphi(t,x,v) \, ds \\
&= \sum_{j=1}^d  \mathsf{K}^{(j)}_{U_j}(H_j)(t,x),
\end{aligned}
$$
setting $U_j(s,t,x,v) = \pa^k_v \mathcal{U}_j(s,x-(t-s)v,v)  \varphi(t,x,v)$. The regularity assumption of Theorem~\ref{prop-K} reads
$$
 \sup_{0 \leq s,\, t \leq T} \| U_j(t,s,\cdot) \|_{\Hc^k_{\sigma}} < +\infty
$$
for $k>1+d$, $\sigma>d/2$, and the consequence is 
\begin{equation}
\label{eq-rho-new}
\|\rho_\psi\|_{L^2(0,t; L^2_x)} \lesssim  \sup_{0 \leq s,\, t \leq T} \sum_{j} \| U_j(t,s,\cdot) \|_{\Hc^k_{\sigma}} \| H_j \|_{L^2(0,t; L^2_x)} .
\end{equation}
This estimate is not a consequence of Theorem~\ref{thm-PS}.
Indeed, note that it does not involve the $L^2$ norm of the solution $f$: somehow, this can be roughly seen as a version of Theorem~\ref{thm-PS} 
allowing $\a=1$ (whereas Theorem~\ref{thm-PS}  only allows $\a\leq 1/2$), at the expense of asking for the structure assumption~\eqref{eq-str} on the source $g$ and of considering a norm for the source that is more demanding than the $L^2$ norm of estimate~\eqref{eq-rho}.

Observe also that Theorem~\ref{prop-K} does not require the test function in $v$ to be decaying at infinity, as long as for all $j$,  $\mathcal{U}_j$ in~\eqref{eq-str} is itself decaying sufficiently fast at infinity.

\section{Proof of Theorem~\ref{thm} and of Corollaries~\ref{coro} and~\ref{cor}}
\label{sec-end}

{
We finally set up an induction argument, that relies on the machinery developed in the previous sections, and will ultimately lead to the proof of Theorem~\ref{thm}.
In order to summarize the procedure in a few words:
\begin{itemize}

\item By induction, we assume smoothness on the moments until order $n'-1$.
We can first apply Lemma~\ref{lem-Lij} to obtain the same smoothness for the coefficients of the operators $L_{i,j}$.

\item We apply Lemma~\ref{even'} or~\ref{odd'} in order to get the system of equations satisfied by $(L^{K,L}\pa^\alpha_x \pa^\beta_v f)$, which is of the abstract form
$$\Tc(\mathfrak{F}) + \mathfrak{A} \mathfrak{F}  = \mathcal{B},
$$
where $\mathfrak{A}$ is a matrix whose coefficients we control and $\mathcal{B}$ is the rest we need to control.
Loosely speaking, $\mathcal{B}$ is made either of remainders we can control thanks to the induction assumption, and terms of the form $- \pa^{\alpha(K,L) }_x F \cdot \na_v f$, for $K,L \in \{1,\cdots, d\}^{m+k}$, whose contribution is the main matter.

\item We then invert the operator $\Tc + \mathfrak{A}$ in order to solve the equation. At this stage, after integration in velocity (remind that we are interested in the regularity of moments), we use the changes of variables introduced in Lemmas~\ref{lem-phi-alg}, \ref{lem-phi} and~\ref{straight}.

\item  What is rather straightforward then is the study of the contribution of the initial data and of the remainder terms in $\mathcal{B}$. As already said, the contribution of the terms $- \pa^{\alpha(K,L) }_x F \cdot \na_v f$ is more serious and involves the study of integrals of the form
$$
 \int_{0}^t \int_{\R^d} (\pa_{x_i}  \pa^{\alpha(K,L) }_xF) (s,  x - (t-s) a(v)) 
      U(t,s,x,v)\, dv ds,
$$
which seem to feature a loss of derivative in $x$.
We recognize the integral operators introduced and studied in Section~\ref{sec-K}. This is where the smoothing estimate 
of Proposition~\ref{proposition-K} proves to be crucial.
\end{itemize}

}
\subsection{End of the proof of Theorem~\ref{thm}}


For $ \mathbf{n} \geq 2m-1$, let $\mathcal{P}(\mathbf{n})$ be the following statement:

\bigskip
\fbox{
\begin{minipage}[c]{0.9\textwidth}
 $\qquad \mathbf{\mathcal{P}(n)}:$ There is $T>0$ such that
for all test functions $$\psi(t,x,v) \in L^\infty(0,T; \mathcal{W}^{d+2+n-2m,\infty}_{-r_0}),$$ setting 
for all $|\a|=n$,
$$m_{\psi,\alpha} (t,x) = \int_{\R^d} \pa^\alpha_x f(t,x,v) \psi(t,x,v) \, dv,$$ 
there exists $\Lambda$ for which
\begin{equation}
\label{Pn-key}
\sum_{|\alpha|=n} \| m_{\psi,\alpha}\|_{L^2(0,T; L^2_x)}\lesssim \Lambda(T,M).
\end{equation}
\end{minipage}
}

\bigskip

By Proposition~\ref{lem-local}, it is clear that $\mathcal{P}(2m-1)$ is verified.

Let $\mathbf{n} \in \{2m, \cdots, 2(m+p)\}$. 
Let us  assume that $\mathbf{n}$ is even, of the form $2(m+k)$. We shall not proceed with the case where $\mathbf{n}$ is odd, as it follows by completely similar arguments. 
Assume that $\mathcal{P}(2m), \cdots, \mathcal{P}(\mathbf{n}-1)$ are satisfied and let $T>0$ be a time on which the estimates~\eqref{Pn-key} (for $2m, \cdots, \mathbf{n}-1$) are satisfied. We shall prove that $\mathcal{P}(\mathbf{n})$ is also verified. Once this will be done, we deduce by induction that $\mathcal{P}(2m), \cdots, \mathcal{P}(2(m+p))$ are satisfied; we then deduce the required estimates~\eqref{apriori}.

\bigskip

Thanks to the property  $\mathcal{P}(\mathbf{n}-1)$ applied to the $(\psi_j)_{j=1,\cdots,r}$, and~\eqref{Fj}, we first have
\begin{equation}
\label{Fjnew}
\sum_{j=1}^\ell \| F^j \|_{L^2(0,T; H^{2(m+k)-1}_x)} \leq \Lambda(T,M).
\end{equation}
We can therefore apply Lemma~\ref{lem-Lij} and obtain a possible smaller time still denoted by $T$ and operators $L_{i,j}$ with coefficients
$(\varphi_{k,l}^{i,j}, \psi_{k,l}^{i,j})_{i,j,k,l \in \{1,\cdots,d\}}$ 
belonging to $L^\infty(0,T; \Hc^{2(m+k)-2}_{-\r})$ for all $\r>d/2$, with uniform regularity 
$$
\|(\varphi_{k,l}^{i,j}, \psi_{k,l}^{i,j})_{i,j,k,l}\|_{L^\infty(0,T; \Hc^{2(m+k)-2}_{-\r})}\leq \Lambda(T,M).
$$

Let us  consider the vector (the precise ordering does not matter)
\begin{equation}
\label{def-F}\mathfrak{F}= 
\Big( L^{K,L} \pa_x^\alpha \pa_v^\beta f\Big)_{r\in\{m-k,\cdots, m+k\}, K,L \in \{1,\cdots,d\}^{r}, |\a| + |\b| =m+k-r} 
\end{equation}
By Lemma~\ref{even'}, it follows that $\mathfrak{F}$ satisfies the system
\begin{equation}
\label{eq-crucru}
\Tc(\mathfrak{F}) + \mathfrak{A} \mathfrak{F}  = \mathcal{B}  + \mathfrak{R},
\end{equation}
where $\mathfrak{A}(t,x,v)$ is a matrix with coefficients in $L^2(0,T; W^{d+2, \infty}_{x,v})$, satisfying
\begin{equation}
\label{estim-A}
\| \mathfrak{A}\|_{L^2(0,T; W^{d+2, \infty}_{x,v})} \lesssim \Lambda(T,M).
\end{equation}
 (The term $\mathfrak{A} \mathfrak{F} $  encodes the contribution of the leading order terms in the triple sum of the right-hand side of~\eqref{even-eq-general}.)
 On the other hand, $\mathfrak{R}$ is a remainder satisfying the estimate
\begin{equation}
\label{rest-fin}
\| \mathfrak{R}\|_{L^2(0,T; \Hc^0_{\tilde{r}})} \lesssim \Lambda(T,M)
\end{equation}
for all $\tilde{r}<r-d/2$
and $\mathcal{B}$ is  defined as follows: all its components  are equal to $0$ except those corresponding to the components associated to some $K,L \in  \{1,\cdots,d\}^{m+k}$, in which case it is equal to
$$
- \pa^{\alpha(K,L) }_x F \cdot \na_v f.
$$
The next step consists in using the change of variables $v \mapsto \Phi(t,x,v)$, where $\Phi$ solves~\eqref{Burgers}, in order to straighten the vector field $\Tc$, see Lemma~\ref{lem-phi-alg}. To this end, we use Lemma~\ref{lem-phi} (reduce again $T>0$ if necessary) and use the notation $\circ \Phi $ to denote the composition in $v$ with $\Phi$. Setting $\F= \mathfrak{F} \circ \Phi$, we obtain
\begin{equation}
(\pa_t + a(\Phi) \cdot \na_x )\F + (\mathfrak{A} \circ \Phi) \F = \mathcal{B}\circ \Phi  + \mathfrak{R}\circ \Phi.
\end{equation}
Let $\mathcal{A}(s,t,x,v)$ be the operator, 
whose existence is ensured by the Cauchy-Lipschitz theorem, as the solution of the following \emph{linear} ODE
$$
\pa_s \mathcal{A}(s,t,x,v) = \mathfrak{A}(s,x,\Phi(s,x,v)) A(s,t,x,v), \qquad A(t,t,x,v)= Id.
$$
Thanks to~\eqref{estim-A}, we also have
\begin{equation}
\label{estim-AA}
\| \mathcal{A}(\cdot,t,\cdot) \|_{L^\infty(0,T; W^{d+2, \infty}_{x,v})} + \| \pa_s \mathcal{A}(\cdot,t,\cdot) \|_{L^2(0,T; W^{d+2, \infty}_{x,v})}\lesssim \Lambda(T,M).
\end{equation}
By the method of characteristics we get
\begin{equation}
\begin{aligned}
\label{char-eq}
\F(t,x,v) &= \mathcal{A}(t,0,x,v) \F(0,\mathrm{X}(0,t,x,v),v) \\
&+\int_0^t  \mathcal{A}(t,s,x,v) \mathcal{B}\circ \Phi (s,\mathrm{X}(s,t,x,v),v) \,ds  \\
&+\int_0^t  \mathcal{A}(t,s,x,v) \mathfrak{R}\circ \Phi (s,\mathrm{X}(s,t,x,v),v) \,ds. 
\end{aligned}
\end{equation}
Let $\psi(t,x,v) \in L^\infty(0,T; \mathcal{W}^{d+2+2k,\infty}_{-r_0})$.  We multiply the representation formula~\eqref{char-eq} by $\psi(t,x,\Phi(t,x,v))|\det D_v \Phi(t,x,v)|$ and integrate in $v$ to obtain
\begin{equation}
\int_{\R^d} \F(t,x,v) \psi(t,x,\Phi(t,x,v)) |\det D_v \Phi(t,x,v)| \, dv  = I_0 + I_1 + I_2 
\end{equation}
with
\begin{equation}
\begin{aligned}
I_0 &= \int_{\R^d}  \mathcal{A}(t,0,x,v) \F(0,\mathrm{X}(0,t,x,v),v) \psi \circ \Phi |\det D_v \Phi(t,x,v)| \, dv,\\
I_1&=\int_0^t \int_{\R^d} \mathcal{A}(t,s,x,v)(\mathfrak{R}\circ \Phi) (s,\mathrm{X}(s,t,x,v),v) \psi \circ \Phi |\det D_v \Phi(t,x,v)| \, dv ds, \\
I_2&=\int_0^t \int_{\R^d} \mathcal{A}(t,s,x,v) (\mathcal{B}\circ \Phi) (s,\mathrm{X}(s,t,x,v),v)  \psi \circ \Phi|\det D_v \Phi(t,x,v)|\,  dv ds.
\end{aligned}
\end{equation}
By the change of variables $v \mapsto \Phi(t,x,v)$, we have
$$
\int_{\R^d} \F(t,x,v) \psi(t,x,\Phi(t,x,v)) |\det D_v \Phi(t,x,v)| \, dv = \int_{\R^d} \mathfrak{F} (t,x,v) \psi(t,x,v) \, dv.
$$
Let us first study  this term.
Since $\mathcal{P}(2m), \cdots, \mathcal{P}(2(m+k)-1)$ are satisfied, we can apply Lemma~\ref{lem-momentsLij} (the assumption~\eqref{assump-moments} is indeed verified), which yields, see~\eqref{eq-mom1} and~\eqref{eq-mom2}, that for all $I,J \in \{1,\cdots,d\}^{m+k}$,
\begin{equation*}
\int_{\R^d}  L^{I,J} f \, \psi(t,x,v) \,dv =  \int_{\R^d}\pa^{\alpha(I,J)}_x f \, \psi(t,x,v) \,dv + \mathfrak{R}_{I,J,\psi},
\end{equation*}
where $\mathfrak{R}_{I,J,\psi}$ is a remainder satisfying the estimate
\begin{equation*}
\| \mathfrak{R}_{I,J,\psi}\|_{L^2(0,T;L^2_x)} \leq \Lambda(T,M).
\end{equation*}
 Consequently, recalling the definition of $\mathfrak{F}$ in~\eqref{def-F}, if we are able to obtain the bound 
$$\|I_0 \|_{L^2(0,T; L^2_x)} + \| I_1\|_{L^2(0,T; L^2_x)} +  \|I_2\|_{L^2(0,T; L^2_x)} \leq \Lambda(T,M),$$ 
then we deduce the bound 
$$\sum_{I,J} \left\| \int_{\R^d} \pa_{x}^{\alpha(I,J)} f \psi  \, dv \right\|_{L^2(0,T;L^2_x)} \leq \Lambda(T,M),$$ 
that is we obtain the sought bound~\eqref{Pn-key} at rank $n$.

\subsubsection{Study of $I_0$} Let us begin by treating the contribution of the initial data, that corresponds to the term $I_0$. First by using estimate~\eqref{estim-Burgers-infini} in Lemma~\ref{lem-phi}, the $L^\infty$ bound for $\mathcal{A}$ in~\eqref{estim-AA}, 
and the estimate
\begin{equation}
\label{psicool}
\| (1+|v|^2)^{-r_0/2} \psi \|_{L^\infty_{x,v}} \lesssim 1, 
\end{equation}
we have for all $x \in \mathbb{T}^d$, 
    \begin{align*}
      \left|\int_{\R^d} \mathcal{A}(t,0,x,v)\F( 0,\mathrm{\mathrm{X}}(0,t,x,v),v) (1+|v|^2)^{r_0/2} |\det D_v \Phi(t,x,v)| \, dv \right|  \\
      \leq \Lambda(T, M)  \int |\F (0,\mathrm{X}(0,t,x,v),v) |  (1+|v|^2)^{r_0/2} \, dv.
       \end{align*} 
     Therefore, we get that
     $$ \| {I}_{0} \|_{L^2(0,T; L^2_{x})} \leq \Lambda(T, M)  \left\|\int_{\R^d} \|\F (0,\mathrm{X}(0,t,\cdot,v),v) \|_{L^2_{x}}  (1+|v|^2)^{r_0/2}\, dv \right\|_{L^2(0, T)}.$$
      By using the change of variable  $y= \mathrm{X}(0, t, x, v) + ta(v)=  x - t \tilde{\mathrm{X}}(0, t, x, v)$ and  Lemma \ref{straight}, we obtain that 
      $$  \|\F (0,\mathrm{X}(0,t,\cdot,v),v) \|_{L^2_{x}} \leq \Lambda(T, M) \|\F(0,\cdot - ta(v), v)\|_{L^2_x} \leq \Lambda(T, M) \| \F(0,\cdot, v) \|_{L^2_{x}}$$
       and hence, we deduce that since $r> r_0 + d/2$,  for some $r'>d/2$, there holds
       $$  \| {I}_{0} \|_{L^2(0,T; L^2_{x})} \leq \Lambda(T, M) \left(\int_{\mathbb{R}^d}    \frac{dv}{(1+|v|^2)^{r'}}\right)^{\frac{1}{2}} \| \F(0) \|_{\Hc^0_{r}}.$$
        By using the fact that at $t=0$ we have $\Phi(0,x,v)=v$ and   $L^{(K,L)}|_{t=0}= \partial_{x}^{\alpha (K,L)}$ we end up with
         $$ \| \F(0)\|_{\Hc^0_{r}} =  \|\mathfrak{F}(0)  \|_{\Hc^0_{r}} \leq \Lambda(M) \sum_{j=m-k}^{m+k} \sum_{|\a| + |\b| =m+k-j } \| \pa_x^{2j} \pa_x^\alpha \pa_v^\beta f_0\|_{\Hc^{0}_{r}}$$ 
         and
          hence we finally obtain that
      $$  \| {I}_{0} \|_{L^2(0,T; L^2_{x})} \leq  \Lambda(T, M).$$
       
  \subsubsection{Study of $I_1$}      We treat the other remainder term $I_1$ in a similar fashion. Indeed, using again estimate~\eqref{estim-Burgers-infini} in Lemma~\ref{lem-phi}, \eqref{estim-AA} and~\eqref{psicool}, we first get 
\begin{multline*}
\| {I}_{1}\|_{L^2(0, T; L^2_{x})}\leq \Lambda(T, M) \\
    \times \left \| \int_{0}^t \int_{\mathbb{R}^d}  \|\mathfrak{R} (s, \mathrm{X}(s,t,\cdot,v), \Phi(s,\mathrm{X}(s,t,\cdot,v),v)) \|_{L^2_{x}} (1+|v|^2)^{r_0/2} \,dv ds\, \right\|_{L^2(0, T)}.
\end{multline*}
Thanks to the change of variable $x\mapsto  \mathrm{X}(s,t,x,v)$ and to the estimates of Lemma \ref{straight}, it follows that
\begin{align*}
\| {I}_{1}\|_{L^2(0, T; L^2_{x})} & \leq  \Lambda(T, M) \left\|  \int_{0}^t \int_{\mathbb{R}^d} \|\mathfrak{R}  (s, \cdot, \Phi(s,\cdot,v))\|_{L^2_{x}}  (1+|v|^2)^{r_0/2}   \,dv ds \right\|_{L^2(0, T)} \\
 &  \leq  \Lambda(T, M) \left\| \int_{0}^t \| (\mathfrak{R}\circ \Phi) (s)\|_{\Hc^0_{\r}}\, ds  \right\|_{L^2(0, T)} \\
 & \leq  \Lambda(T, M) \,T\,  \| \mathfrak{R}\circ \Phi \|_{L^2(0, T; \Hc^0_{\r})},
  \end{align*}
  by choosing $\r > r_0+d/2$, which is possible since $r> r_0 + d$.
Using again the change of variables $v\mapsto \Phi(t,x,v)$,  Lemma \ref{lem-phi} and the estimate~\eqref{rest-fin}, we thus obtain
             $$  \| {I}_{1} \|_{L^2(0,T; L^2_{x})} \leq  \Lambda(T, M).$$

\subsubsection{Study of $I_2$}  The main matter thus concerns the contribution of the term $I_2$, which  features an apparently loss of derivative in $x$. This is however not the case, thanks to Proposition~\ref{proposition-K}. Let $K,L \in \{1,\cdots,d\}^{m+k}$. Writing $\pa^{\alpha(K,L)}_x =
\pa_{x} \pa^{\alpha'}_x$ with $|\alpha'| = |\alpha(K,L)| -1$,
we are led to study terms of the form (here $F_i^j$ stands for the $i$-th coordinate of $F^j$):
\begin{align*}
\sum_{j=1}^\ell \int_0^t \int_{\R^d} (\pa_x \pa^{\alpha'}_x  F^j_i)(s, \mathrm{X}(s,t,x,v))  \psi(t,\mathrm{X}(s,t,x,v),\Phi(s,\mathrm{X}(s,t,x,v),v)) \\
\times  \mathcal{A}_{K,L}^{I,J}(t,s,,\mathrm{X}(s,t,x,v), \Phi(s,\mathrm{X}(s,t,x,v),v))  A_j(\Phi(s,\mathrm{X}(s,t,x,v),v))\\
\times \pa_{v_i} f(s, \mathrm{X}(s,t,x,v), \Phi(s,\mathrm{X}(s,t,x,v),v)  |\det D_v \Phi(t,x,v)| \, dv ds,
\end{align*}
where $\| \mathcal{A}_{K,L}^{I,J} \|_{L^\infty(0,T; W^{d+2, \infty}_{x,v})}\leq \Lambda(T,M)$.

We use the change of variables $ v= \Psi(s,t,x,w)$  to rewrite this expression as 
$ {\sum_{j=1}^\ell K_{U_j} (\pa^{\alpha'}_x F^j_i)} $,
with
\begin{equation}
\begin{aligned}
U_{j}(s,t,x,v)&= A_j \Big(\Phi(s,x-(t-s)a(v),\Psi(s,t,x,v))\Big) \\
&\times  \mathcal{A}_{K,L}^{I,J}\Big(t,s,x-(t-s)a(v), \Phi(s,x-(t-s)a(v),\Psi(s,t,x,v))\Big) \\
&\times  \psi\Big(t,x-(t-s)a(v), \Phi(s,x-(t-s)a(v),\Psi(s,t,x,v))\Big) \\  
&\times \pa_{v_i} f\Big(s, x-(t-s)a(v), \Phi(s,x-(t-s)a(v),\Psi(s,t,x,v))\Big) \\
&\times  |\det D_v \Phi(t,x,\Psi(s,t,x,v))|  |\det D_v \Psi(s,t,x,v)|,
\end{aligned}
\end{equation}
where we recall the operators $K$ were introduced in Section~\ref{sec-K}.
In order to apply Proposition~\ref{proposition-K},
we have to estimate, $s,t$ being fixed, $U_{j}$ in $\Hc^{2+d}_{r'}$, with $r'> d/2 + 2 (1+\lambda)(1+d)$ 
and $r\geq r' + r_0$ (which is possible since $r>R$ as defined in~\eqref{def-NR}).
First, by~\eqref{deriv-a}, \eqref{deriv-A}, \eqref{estim-AA}, \eqref{estim-Burgers-infini} in Lemma~\ref{lem-phi} and estimate~\eqref{estimPsi} in Lemma~\ref{straight}, we can uniformly bound in $L^\infty$ all terms involving $A_j$, $\Phi$, $\Psi$ and their derivatives (since only at most $2+d$ derivatives can be involved). For what concerns $\psi$, we use
$$
\| (1+|v|^2)^{-r_0/2} \pa^\a \psi \|_{L^\infty_{x,v}} \lesssim 1, \quad \forall |\a|\leq d+2.
$$
We are therefore led to estimate integrals under the form 
       $$\begin{aligned} &I= \\
        &\left|  \int_{\mathbb{T}^d \times \mathbb{R}^d} |g(x-(t-s)a(v),{ \Phi(s,x-(t-s)v,\Psi(s,t,x,v))} )|^2 (1+ |v|^2)^{r_0 + r'} \, dv dx \right|,
        \end{aligned}$$
       where $g = \pa^\alpha f$, $|\alpha| \leq d+3$.
     To this end, we can use the change of variables  $v \mapsto w=  \Psi(s,t,x,v)$ and rely on estimate~\eqref{LinftyXtilde} in Lemma~\ref{straight} to obtain the bound 
          $$ I \leq \Lambda(T, M)  \int_{\mathbb{T}^d \times \mathbb{R}^d} |g(\mathrm{X}(s,t,x,w), { \Phi(s,\mathrm{X}(s,t,x,w),w)}) |^2  (1 + |w|^2)^{r_0+r'}\, dx dw.$$
          Next, arguing as for $I_1$, we can use successively
         the change of variable $x \mapsto y= \mathrm{X}(s,t,x,w)$ with the estimates of Lemma \ref{straight},
   and the change of variable $w \mapsto u= \Phi(s,y, w)$ with estimate~\eqref{estim-Burgers-infini} in  Lemma~\ref{lem-phi}, 
to finally obtain
           $$   I \leq \Lambda(T, M)  \|g\|_{\Hc^0_{r}}^2 \leq \Lambda(T, M)  \|f\|_{\Hc^{2m-1}_{r}}^2,$$
since $2m-1 \geq d+3$ and $r> R$. As a result we obtain the bound
\begin{equation}
\sup_{s,t} \| U_j \|_{\Hc^{2+d}_{r'}} \leq \Lambda(T, M)  \|f\|_{\Hc^{2m-1}_{r}}^2 \leq  \Lambda(T, M).
\end{equation}
We can therefore apply Proposition~\ref{proposition-K} to get the bound 
\begin{equation}
\begin{aligned}
\| K_{U_{j,i}} (F^j_i) \|_{L^2(0,T;L^2_x)} &\lesssim \sup_{s,t} \| U_j \|_{\Hc^{2+d}_{r'}} \| F^j_i\|_{L^2(0,T; H^{2(m+k)-1}_x)}  \\
&\leq  \Lambda(T, M) \| F^j_i\|_{L^2(0,T; H^{2(m+k)-1}_x)} \\
&\leq  \Lambda(T, M),
\end{aligned}
\end{equation}
thanks to estimate~\eqref{Fjnew}. 
We deduce
             $$  \| {I}_{2} \|_{L^2(0,T; L^2_{x})} \leq  \Lambda(T, M)$$
and gathering all pieces together, we therefore obtain~\eqref{Pn-key} at rank $\mathbf{n}$, and the induction argument is complete. Theorem~\ref{thm} follows.

\subsection{Proof of Corollary~\ref{coro}}

In order to prove the higher order regularity for the characteristics, we proceed as in \cite[Lemma 5.1]{HKR}.

By Theorem~\ref{thm} and the assumption~\eqref{Fj}, we have for all $j =1,\cdots, \ell$,
 $$F^j \in L^2(0,T; H^{n'}_x)$$
 and thus by Sobolev embedding, we deduce that for $k < n' -d/2$,
  \begin{equation}
  \label{Linft}
F^j \in L^2(0,T; W^{k,\infty}_x).
\end{equation}
We set 
$$
Z:=(Y,W)  := (X-tv-x, V-v).
$$
Let us first prove that $Z \in L^\infty(0,T; W^{k,\infty}_{x,v})$ for  $k < n' -d/2$.
Note that by definition of $(X,V)$, $Z$ satisfies the equation
$$
Z= \left( \int_0^t (Y+v) \, ds, \, \int_0^t \sum_{j=1}^\ell A_j(W+v) F^j (Y+x+tv) \, ds\right).
$$
By~\eqref{deriv-A} and~\eqref{Linft}, we obtain by induction (on the number of applied derivatives) that for $t\leq T$,
$$
\sup_{|\alpha|< n'-d/2}  \sup_{[0,t]} \| \pa^\alpha_{x,v} Z\|_{L^\infty_{x,v}}
\lesssim \int_0^t \lambda(s) \left(1+\sup_{|\alpha|< n'-d/2}  \sup_{[0,s]} \| \pa^\alpha_{x,v} Z\|_{L^\infty_{x,v}}\right)\, ds,
$$
where $\lambda$ is a non-negative function belonging to $L^2(0,T)$, with norm bounded by $\Lambda(T,M)$. We deduce our claim thanks to the Gronwall inequality, which yields
\begin{equation}
\label{infi}
\sup_{|\alpha|< n'-d/2}  \sup_{[0,t]} \| \pa^\alpha_{x,v} Z\|_{L^\infty_{x,v}} \leq \sqrt{t} \Lambda(T,M).
\end{equation}
We  deduce in particular from this estimate that for $T' \in (0,T]$ small enough, for all $v \in \R^d$, the map $x \mapsto X(T',0,x,v)$ is a $C^1$ diffeomorphism.

Next, let us turn to the $L^\infty_t L^\infty_v L^2_x$ estimate. 
We set
$$
\mathcal{N}(t) := \sup_{|\alpha|\leq n'}  \sup_{[0,t]} \| \pa^\alpha_{x,v} Z\|_{L^\infty_{v}L^2_x}.
$$
By an application of the Fa\`a di Bruno formula, we obtain
\begin{align*}
&\mathcal{N}(t)  \lesssim 
 \sum_{j=1}^\ell  \int_0^t  \sum_{k_1, k_2, \beta_{1}, \cdots, \beta_{k_1+k_2}}
 J^j_{k_1,k_2, \beta_{1}, \cdots, \beta_{k_1+k_2}} \, ds,
\end{align*}
with 
\begin{multline*}
J^j_{k_1,k_2, \beta_{1}, \cdots, \beta_{k_1+k_2}}:= \\
 \Big\| | (D^{k_1}_{v} A_j) \circ V(s)  (D^{k_2}_{x} F^j) \circ X(s) | 
   |\partial^{\beta_{1}}_{x,v} (X,V)| \cdots | \partial^{\beta_{k_1+k_2}}_{x,v} (X,V) | \Big\|_{L^\infty_{v} L^2_{x}},
   \end{multline*}
   and where the sum is taken only on indices such that $k_1+k_2=:  k \leq | \alpha | \leq n'$, $ \beta_{1}+ \cdots+ \beta_{k}= | \alpha |$
             with for every $j$,  $|\beta_{j}| \geq 1$ and $|\beta_{1}| \leq | \beta_{2}| \leq \cdots\leq  | \beta_{k}|.$
             
             Let us observe that  in the sum, if $k_1+k_2 = k \geq 2$, we necessarily have $|\beta_{k-1}| < n' -{d / 2}$.
              Indeed, otherwise, we would have 
              $ |\beta_{1}| + \cdots + | \beta_{k} | \geq  2n' - d  $ and thus $n' \geq 2n'-d $, which means  $ n' \leq d$. This is impossible by assumption on $n'$.  Next, 
             \begin{itemize}
             \item if  $k_2< n' -{d / 2}$ and $k_1 + k_2=k  \geq 2$ we obtain thanks to the above observation and~\eqref{infi} that for $i=1,\cdots, k-1$,
             \begin{equation}
             \label{beta}
      \| \partial^{\beta_{i}}_{x,v} (X,V)\|_{L^\infty_{x,v}} \lesssim  1 + T + \|\partial^{\beta_{i}}_{x,v} (Z) \|_{L^\infty_{x,v}}
              \lesssim   \Lambda(T, M).
\end{equation}
             Moreover, using  \eqref{deriv-A}, \eqref{Linft}  we get
           \begin{align*}
          &  J^j_{k_1,k_2, \beta_{1}, \cdots, \beta_{k_1+k_2}}  \\&  \leq  \|D^{k_1} A_j \|_{L^\infty_{x,v}}
            \|D^{k_2} F^j \|_{L^\infty_{x,v}}\, \left\|  \prod_{i=1}^{k-1} \partial^{\beta_{i}}_{x,v} (X,V) \right\|_{L^\infty_{x,v}}  \| \partial^{\beta_{k}}_{x,v} (X,V)\|_{L^\infty_{v} L^2_{x}}\\
           &  \leq  \Lambda(T, M)    \|D^{k_2} F^j \|_{L^\infty_{x,v}} (1 +  \mathcal{N}(s)).
           \end{align*}
           If $k=1$, the above estimate is clearly also  valid.
             \item  if $k_2\geq n' -{d / 2}$, we observe that  for every $i$,   $| \beta_{i}| \leq  | \beta_{k}| \leq n' - (k-1) <  {d / 2}$. In particular
              $| \beta_{i}| <   n' - {d / 2 }$  by  assumption on $n'$ and we have this time that~\eqref{beta} holds for all $i=1,\cdots, k$.
        This yields
       \begin{align*}J^j_{k_1,k_2, \beta_{1}, \cdots, \beta_{k_1+k_2}}
       &  \lesssim \left\| (D^{k_1}_{v} A_j ) \circ V \right\|_{L^\infty_{v} L^2_{x}} 
       \left\| (D^{k_2}_{x} F^j ) \circ X \right\|_{L^\infty_{v} L^2_{x}} 
       \Lambda(T, M) \\
        &  \lesssim \left\| D^{k_1}_{x,v} A_j \right\|_{L^\infty_{v}} 
       \left\| (D^{k_2}_{x} F^j ) \circ X \right\|_{L^\infty_{v} L^2_{x}} 
       \Lambda(T, M)
        \\ &  \lesssim \Lambda(T, M)   \left\| (D^{k_2}_{x} F^j ) \right\|_{L^2_{x}}.
       \end{align*}
       To get the last estimate, we restrict to $T'\leq T$ small enough so that we can use the change of variable
      $ y= X(t,0,x,v)$ when computing the $L^2_{x}$ norm of $ (D^{k_2}_{x,v} F^j ) \circ X$.
             \end{itemize}
             By combining the above estimates, we obtain that for $t\leq T'$,
           $$ \mathcal{N}(t) \leq  \sqrt{t} \Lambda(T, M) + \int_{0}^t \Lambda(T, M) \sup_j \|F^j (s)\|_{H^{n'}_x}\mathcal{N}(s)\, ds.$$
            By using again~\eqref{Linft} and the Gronwall inequality, we thus obtain that for $t\leq T'$,
            $$    \mathcal{N}(t) \lesssim \sqrt{t} \Lambda(T,M),$$
which concludes the proof of Corollary~\ref{coro}.

\subsection{Proof of Corollary~\ref{cor}}

The idea, as in \cite[Proposition 5.2]{Gerard}, consists in applying Theorem~\ref{thm} with the test function 
$$
\psi_\eta (v) = e^{- v \cdot \eta} \in {W}^{n',\infty}_{x,v}, 
$$
where $ \eta \in \R^d$ has to be seen as the Fourier variable in velocity. A close inspection of the proofs reveals that the conclusion of Theorem~\ref{thm} can be refined into
\begin{equation}
\label{eq-proof-cor}
\forall \eta \in \R^d, \qquad \left \|\int f \psi_\eta \,dv  \right\|_{L^2(0,T_0; H^{n'}_x)} \leq \Lambda (T_0,M, \| \psi_\eta\|_{{W}^{n',\infty}_{v}}), 
\end{equation}
where $\Lambda$ is a polynomial function.
Moreover, $ \| \psi_\eta\|_{{W}^{n',\infty}_{v}} \lesssim \Lambda'(|\eta|)$, where $\Lambda'$ is also a polynomial function (of degree $n'$).
Since
$$
1/(2\pi)^{d/2} \int f \psi_\eta \,dv  = \mathcal{F}_v {f}(t,x, \eta),
$$
 we deduce from~\eqref{eq-proof-cor} that for some $p>0$ taken large enough,
$$
\left \|  \widehat{f}(t,k, \eta)  (1+ |k|^2)^{n'/2} (1+ |\eta|^2)^{-p/2} \right\|_{L^2(0,T_0; L^2(\Z^d \times \R^d))} < +\infty,
$$
which means  that $f \in L^2(0,T_0; H^{n',-p}_{x,v})$.

\section{Application to classical models from physics}
\label{sec-Vla}

The goal of this section is to briefly explain why both Vlasov-Poisson and relativistic Vlasov-Maxwell enter the abstract framework, and thus why Theorem~\ref{thm} (and its corollaries) apply to these classical models.

\subsection{Vlasov-Poisson}     
\label{appli1}

The Cauchy problem for the Vlasov-Poisson system~\eqref{VP} was studied (among many other references)
 \begin{itemize}
 \item for what concerns (global) weak solutions, in \cite{Ar},
\item  for what concerns local strong solutions in \cite{UO}, and \cite{BarDeg,LP,Pfa,SchVP,Glassey,BR,Hor} for global strong solutions.
\end{itemize}

Let us check the following structural assumptions for~\eqref{VP}.

\noindent $\bullet$ \emph{Assumptions on the advection field.}
In this model, $a(v) =v$, so that all required assumptions on $a$ are straightforward properties. One can take $\lambda=0$ in~\eqref{growth-a-1}. 

\noindent $\bullet$ \emph{Assumptions on the force field.}
For what concerns the force field $F$, we can write $\ell=1$, $A_1=1$ and $F^{1} = -\na_x \phi$, where $\phi$ is computed only thanks to the moment of order $0$ of $f$ only, that is $\psi_1 =1$ (thus $r_0=0$) and
$$
m_{\psi_1}= \int_{\R^d} f \, dv.
$$
The assumption~\eqref{Fj} follows straightforwardly from the Poisson equation, as for all $n \in \N$,
there holds
$$
\forall t \geq 0, \quad \| F^1 (t) \|_{H^n_x} \lesssim \| m_{\psi_1}(t) \|_{H^{n-1}_x}.
$$
We  however do not need the smoothing effect due to the Poisson equation. It follows directly that
both estimates \eqref{Fj} and \eqref{Fj-inft} hold. 
The stability estimate~\eqref{stab} holds because of the same estimate, by linearity of the Poisson equation. It turns out that using the smoothing estimate, we can obtain a stronger version of Theorem~\ref{thm}: we embed this situation in what we refer to as transport/elliptic systems, and refer to Theorem~\ref{thm2} in Section~\ref{sec-ell}.

\bigskip

Note also that the Vlasov-Poisson system with dynamics constrained on geodesics introduced in the context of stellar dynamics in \cite{DILS} enter the abstract framework as well (and in this model there is no smoothing of the force field).

\subsection{Relativistic Vlasov-Maxwell}     
\label{appli2}
The Cauchy problem for the relativistic Vlasov-Maxwell system~\eqref{VM} was studied (among many other references)
 \begin{itemize}
 \item for what concerns (global) weak solutions, in \cite{DPL},
\item  for what concerns (local) strong solutions in \cite{Wol1,Wol2,Deg,Asano,GS86,GlaStrCMP,Glassey,SchIUMJ,BGP,KS,P,LS}.\end{itemize}

Let us check  the following structural assumptions for~\eqref{VM}.
 
 \noindent $\bullet$ \emph{Assumptions on the advection field.} In this model, $a(v) =\hat{v}$, and one can  check by a straightforward induction that
 $$
\| \pa^\a_v \hat{v} \|_{L^\infty_v} \leq C_\a, \qquad \forall \a.
 $$
 We have $a(\R^d) = B(0, c)$ and there holds the explicit formula
 $$
 \forall w \in B(0, c), \quad a^{-1}(w) = \frac{w}{\sqrt{1-|w|^2/c^2}}.
 $$
 It follows that one can take $\lambda=2$ in~\eqref{growth-a-1}. 
 
 \noindent $\bullet$ \emph{Assumptions on the force field.}
 For what concerns the force field $F$, we observe that we can take $\ell=4$ and write
\begin{equation}
\label{dec-1}
A_1 =1, \quad F^1=E,
\end{equation}
and setting $B=(B_1,B_2,B_3)$ in an orthonormal basis $(e_1,e_2,e_3)$,
\begin{equation}
\label{dec-2}
\begin{aligned}
&A_2 = \hat{v}_1, \quad F^2=  B_2 e_3- B_3 e_1, \\
&A_3 = \hat{v}_2, \quad F^3=  B_3 e_1- B_1 e_3, \\
&A_4 = \hat{v}_3, \quad F^4=  B_1 e_2- B_2 e_1.
\end{aligned}
\end{equation}
The electromagnetic field $(E,B)$ is computed only from initial data $(E_0,B_0)$ and the moments of order $0$ and $1$, that correspond to $\psi_1=1, \psi_2 =\hat{v}$ (so that $r_0=0$) and
$$
m_{\psi_1}= \int_{\R^d} f \, dv, \quad m_{\psi_2}= \int_{\R^d} f \hat{v} \, dv
$$
The assumption~\eqref{Fj} follows from classical energy estimates for Maxwell equations: we have
for all $n \in \N$ and all $t \geq 0$, 
$$\| (E,B)\|_{L^2(0,t; H^n_x)} \leq C_{n} {t}^{3/2} \sum_{i=1}^2 \| m_{\psi_i} \|_{L^2(0,t;H^n_x)}
+  \|(E,B) (0) \|_{L^2(0,t; H^n_x)},
$$
see e.g. \cite[Lemma 3.2]{HKNR}. The  estimate~\eqref{Fj-inft} is proved similarly.
The stability estimate~\eqref{stab} holds because of the same energy estimate, by linearity of the Maxwell equations.


\subsection{Remarks}
Some remarks about possible generalizations of the abstract framework are in order.

\begin{itemize}

\item It is possible to add a smooth force, of $C^k$ regularity with $k$ large enough, and still  adapt the results of Theorem~\ref{thm}, without significantly modifying the analysis. This allows for instance to consider Vlasov-Poisson with a smooth external magnetic field.

\item The so-called relativistic gravitational Vlasov-Poisson system (which may be relevant for galactic dynamics) enters the abstract framework as well, by a combination of the estimates of Section~\ref{appli1} and~\ref{appli2} (see e.g. \cite{GlaStrRVP,HR,KTZ,LMR} for some references about this system).

\item The divergence-free (in $v$) condition for $F$ is not an absolute requirement for the analysis.
It may  be dropped up to introducing more complicated formulas. In particular, it is likely that fluid/kinetic systems for sprays such as Vlasov-Stokes or Vlasov-Navier-Stokes in dimension $d=2$ enter this framework (or a slightly modified version of if) as well. We refer e.g. to \cite{Jab,BDGM,Des,BGLM} for some references about these equations. See also \cite{BarDes,MouSue} for other fluid/kinetic systems.

\item Note that the so-called non-relativistic Vlasov-Maxwell system (that is System~\eqref{VM} with $v$ replacing all occurences of $\hat{v}$) does not enter the abstract framework. Indeed the assumption~\eqref{deriv-A} is not satisfied.
However we claim that~\eqref{deriv-A} is crucial only for having a good local well-posedness theory in $\Hc^n_r$ spaces. This means that without~\eqref{deriv-A}, we can still obtain a result similar to that of Theorem~\ref{thm}, except that we have to \emph{assume} the existence of a solution of~\eqref{abstract} with the required regularity. For what concerns the non-relativistic Vlasov-Maxwell system, such solutions do exist, following Asano \cite{Asano}, which requires the introduction of Sobolev spaces with loss of integrability in velocity.

\end{itemize}

     \section{The case of transport/elliptic type Vlasov equations}
     \label{sec-ell}

       \subsection{An improvement of Theorem~\ref{thm}}
      Let us assume in this section that the following strengthened version of~\eqref{Fj} is verified:
       \begin{equation}
\label{Fj-imp}
\begin{aligned}
&\| F^j \|_{L^2(0,t; H^n_x)} \\
& \leq \Gamma^{(j)}_{n} \left(t,  \| m_{\psi_1} \|_{L^2(0,t;H^{n-1}_x)}, \cdots, \| m_{\psi_r} \|_{L^2(0,t;H^{n-1}_x)}, \sum_{j=1}^{\ell} \|F^{j} (0) \|_{H^n_x} \right).
 \end{aligned}
\end{equation}
In other words, the force is smoothed out and gains one derivative compared to the distribution function. We refer to such a situation as the transport/elliptic type case.
This  includes in particular the Vlasov-Poisson system. We then have the following version of Theorem~\ref{thm}.
This is an improved version in the sense that the higher regularity we ask for is only regularity in $x$ and not at all in $v$ (compare~\eqref{eq-thm2} below to~\eqref{eq-thm} in Theorem~\ref{thm}). 
\begin{theorem}
\label{thm2}
Let $n\geq N$ and $r>R$.
Let $n'>n$ be an integer such that
$n> \lfloor \frac{n'}{2} \rfloor+{d} + 1$. Assume that $f_0 \in \Hc^{n}_{r}$ and $F^j(0) \in H^{n'}_x$ for all $j \in \{1,\cdots, \ell\}$.
Assume furthermore that the initial data $f_0$ satisfy the following higher space regularity:
\begin{equation}
\label{eq-thm2}
\begin{aligned}
\pa^\alpha_x f_0 \in \Hc^0_r, \qquad\forall |\alpha|= n'.
\end{aligned}
\end{equation}
Then there is $T>0$ such that the following holds. There exists a unique solution $(f(t),F(t))$ with initial data $(f_0, F(0))$ to~\eqref{abstract} such that $f(t) \in C(0,T; \Hc^{n}_r)$.

Moreover, for all test functions $\psi \in L^\infty(0,T; \mathcal{W}^{n',\infty}_{-r_0})$, we have
\begin{equation}
\int f \psi \,dv  \in L^2(0,T; H^{n'}_x)
\end{equation}

\end{theorem}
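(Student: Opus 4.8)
The plan is to run the same induction on the regularity order $\mathbf{n}$ used in the proof of Theorem~\ref{thm}, but with a one-order gain in the regularity required of the force coming from~\eqref{Fj-imp}. This is precisely what allows us to avoid asking for any velocity derivatives in the initial-data hypothesis~\eqref{eq-thm2}. The key observation is that in the transport/elliptic case, the differential operators $L_{i,j}$ need no longer be second order: since $F$ is one derivative smoother than $f$, a \emph{first-order} operator suffices to keep the commutation algebra closed with controlled remainders. Concretely I would introduce operators of the form
\begin{equation*}
\ell_i := \pa_{x_i} + \sum_{k=1}^d \phi_k^i(t,x) \pa_{v_k}, \qquad \phi_k^i|_{t=0}=0,
\end{equation*}
whose coefficients $\phi_k^i$ solve a semilinear transport system (analogous to~\eqref{eq-constraint}) chosen so that $[\ell_i,\Tc]$ is, modulo lower-order controlled terms, a combination of $(\ell_i F)\cdot\na_v$, $(\ell_i a)\cdot\na_x$ and the $\ell_j$ themselves — the first-order analogue of Lemma~\ref{Lij}. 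Because only one $x$-derivative is produced at a time, the composition $\ell^{I} := \ell_{i_1}\cdots\ell_{i_{n'}}$ built from $n'$ factors reduces at $t=0$ to $\pa_x^{\alpha}$ with $|\alpha|=n'$, which is exactly the quantity controlled by~\eqref{eq-thm2}, with \emph{no} $v$-derivatives on $f_0$.

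Next I would reprove the analogues of Lemmas~\ref{lem-momentsLij}, \ref{even}, \ref{even'} for these first-order operators: after integration in $v$ the $\ell^I$ act like pure $x$-derivations up to remainders controlled by $\Lambda(T,M)$ (the integration-by-parts-in-$v$ trick transferring excess $v$-derivatives onto the test function is identical), and $\ell^I f$ satisfies a closed system of Vlasov-type equations
\begin{equation*}
\Tc(\mathfrak F) + \mathfrak A\,\mathfrak F = \mathcal B + \mathfrak R,
\end{equation*}
where now $\mathfrak A$ has coefficients controlled in $L^2(0,T;W^{d+2,\infty}_{x,v})$ thanks to the \emph{improved} bound~\eqref{Fj-imp} — this is where the strengthened hypothesis is consumed, since producing $n'$ derivatives on $f$ via $\ell^I$ forces at most $n'$ derivatives on the coefficients $\phi$, hence at most $n'$ derivatives on $F$, which by~\eqref{Fj-imp} only costs $n'-1$ derivatives on the moments, matching the inductive assumption at rank $n'-1$. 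The leading term in $\mathcal B$ is again of the form $-\pa_x^{\alpha}F\cdot\na_v f$ with $|\alpha|=n'-1$ (since one of the $n'$ $x$-derivatives comes for free via the $\pa_{x_i}H$ structure), plus controlled remainders. The restriction $n>\lfloor n'/2\rfloor + d + 1$ (one unit of $d$ more than in Theorem~\ref{thm}) is what is needed so that all the Sobolev embeddings used to bound the coefficients $\phi$ and the remainders in terms of $\|f\|_{\Hc^n_r}$ and $\|F\|_{L^2 H^{n'}}$ go through.

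Then the semi-Lagrangian machinery applies verbatim: straighten $\Tc$ via the Burgers change of variables $v\mapsto\Phi$ (Lemma~\ref{lem-phi}), invert $\Tc+\mathfrak A$ along the characteristics $\mathrm X$ (Lemma~\ref{straight}), multiply by $\psi\circ\Phi\,|\det D_v\Phi|$ and integrate in $v$, producing the three terms $I_0$ (initial data — controlled by $\|\pa_x^\alpha f_0\|_{\Hc^0_r}$, $|\alpha|=n'$, exactly~\eqref{eq-thm2}), $I_1$ (remainder — controlled by~\eqref{rest-fin}), and $I_2$, which after the change of variables $v=\Psi(s,t,x,w)$ becomes a sum of operators $K^{(i)}_{U_j}$ applied to $\pa_x^{\alpha'}F^j_i$ with $|\alpha'|=n'-1$. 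Here is where the $+1$ smoothing of Proposition~\ref{proposition-K} meets the $-1$ smoothing of~\eqref{Fj-imp}: Proposition~\ref{proposition-K} gives $\|K^{(i)}_{U_j}(\pa_x^{\alpha'}F^j_i)\|_{L^2 L^2_x}\lesssim \sup\|U_j\|_{\Hc^{d+2}_{r'}}\|F^j\|_{L^2 H^{n'}_x}$, and~\eqref{Fj-imp} bounds $\|F^j\|_{L^2 H^{n'}_x}$ by $\Lambda(T,M)$ using the inductive control of moments at order $n'-1$. Closing the induction at rank $n'$ then yields $\int f\psi\,dv \in L^2(0,T;H^{n'}_x)$. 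The main obstacle is purely bookkeeping: verifying that the first-order operators $\ell_i$ admit a commutation identity clean enough that the resulting system stays closed with the claimed remainder control — in particular that no stray $v$-derivative of $f$ of order exceeding $n$ ever appears — and that the weaker regularity budget (one $x$-derivative per operator rather than two) is genuinely compatible with all the Sobolev embeddings, which is exactly the role of the modified threshold $n>\lfloor n'/2\rfloor+d+1$.
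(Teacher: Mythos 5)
Your proposal takes a genuinely different route from the paper, but it contains a gap that I do not think can be repaired as sketched. The paper proves Theorem~\ref{thm2} by keeping the \emph{same} second-order operators $L_{i,j}$ and the \emph{same} system $\mathfrak F = \big(L^{K,L}\pa_x^\alpha\pa_v^\beta f\big)$ as in Theorem~\ref{thm}; the entire novelty is in the treatment of the initial-data term $I_0$: one integrates by parts in $v$ repeatedly, using that $\pa_v\mathrm X$ is smooth (this is precisely where the elliptic gain~\eqref{Fj-imp} enters through Lemma~\ref{straight}), to trade each $v$-derivative falling on $f_0\circ\mathrm X$ for an $x$-derivative. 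The stricter threshold $n>\lfloor n'/2\rfloor+d+1$ (the extra $d$ compared to Theorem~\ref{thm}) is what makes the Sobolev embeddings in that integration-by-parts loop close. None of this appears in your sketch.

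Your proposed replacement is to use first-order operators $\ell_i=\pa_{x_i}+\sum_k\phi^i_k\pa_{v_k}$. The idea is appealing because $\ell^I|_{t=0}=\pa_x^{\alpha}$ with only $x$-derivatives; but as written the commutation algebra does \emph{not} close in the components $\ell^J f$. You ask for $[\ell_i,\Tc]$ to produce $(\ell_i F)\cdot\na_v f$, $(\ell_i a)\cdot\na_x f$ and the $\ell_j f$. But then the equation for $\ell_i f$ has the source $(\ell_i F)\cdot\na_v f$, and upon composing, say $\ell_{i_2}\ell_{i_1}$, the Leibniz rule produces the term $(\ell_{i_1}F)\cdot\ell_{i_2}\na_v f$: this involves $\pa_x\pa_v f$, $\pa_v^2 f$, which are \emph{not} expressible as $\ell^J f$. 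To close the system you are forced, exactly as in the paper's Lemma~\ref{even'}, to augment $\mathfrak F$ with components of the form $\ell^J\pa_x^\alpha\pa_v^\beta f$ with $\beta\neq 0$; their restriction to $t=0$ involves $\pa_v^\beta f_0$ with $\beta\neq 0$, and you are back to needing the very $v$-derivative hypotheses on $f_0$ that Theorem~\ref{thm2} dispenses with. Your claimed conclusion that ``$I_0$ is controlled by $\|\pa_x^\alpha f_0\|_{\Hc^0_r}$, $|\alpha|=n'$'' therefore does not follow; making it follow is exactly the hard point, and the paper's integration-by-parts-in-$v$ argument is the mechanism for it.

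There is an alternative closure which you may have had in mind (absorbing the whole of $\pa_{x_i}F$ into $\Tc\phi^i$, so that $[\ell_i,\Tc]f$ is \emph{only} a combination of the $\ell_m f$, with no explicit $\na_v f$). This does yield a closed system in $(\ell^J f)_{|J|\leq n'}$ whose initial data involves only $x$-derivatives. But then your description of the source $\mathcal B$ is wrong: there is no explicit term $\pa_x^\alpha F\cdot\na_v f$ to feed into the averaging operator $K$; the leading singular quantity is instead hidden as $\partial^{n'-1}_{x,v}(\phi\,\partial_v a)$ multiplying the low component $\ell f$, and one would need a separate propagation estimate for $\partial^{n'-1}\phi$ via the semilinear transport equation for $\phi$ — a step that your sketch does not address and that again consumes the elliptic gain. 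Moreover, in the general (non-elliptic) case $\phi$ only has $\mathbf n-2$ available derivatives while $\partial^{\mathbf n-1}\phi$ would be needed, which is one plausible reason the paper uses second-order operators to begin with: two $x$-derivatives are produced per factor of $L_{i,j}$ while the coefficient regularity budget is spent only once. In short: the first-order idea is not absurd in the transport/elliptic case, but the proposal as written simultaneously keeps $(\ell_i F)\cdot\na_v f$ in the commutator \emph{and} claims a system closed in $\ell^J f$ alone with purely $x$-derivative initial data — these two cannot both hold, and either repair requires precisely the missing ingredient (the $v$-to-$x$ integration-by-parts in $I_0$, or a full analysis of $\partial^{n'-1}\phi$).
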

As in Corollary~\ref{cor}, we may deduce as well under the assumptions of Theorem~\ref{thm2} that 
\begin{equation}
f   \in L^2(0,T; H^{n',-\infty}_{x,v}).
\end{equation}    
    
    \begin{proof}[Proof of Theorem~\ref{thm2}]
  The beginning of the proof is the same as for Theorem~\ref{thm} (of which we keep the notations).       Let us set in this context
  \begin{equation}\label{defM2}
M := \| f_0 \|_{\Hc^{2m-1}_r} +\sum_{k=0}^{2p} \sum_{|\alpha| =2m+k} \| \pa^\alpha_x f_0 \|_{\Hc^0_r}+\sum_{j=1}^\ell \|F^j(0)\| _{H^{2(m+p)}_x},
\end{equation}
  We proceed with the same  induction argument, treating all terms similarly except for\footnote{We also remark that in order to treat the term $I_2$, we do not absolutely need to use Proposition~\ref{proposition-K}: we can indeed rely on the smoothing estimate~\eqref{Fjnewnew} on the force instead and argue as we did for $I_1$. This observation will be useful later in order to treat other Vlasov models.} what concerns the treatment of the term $I_0$ for which the following improvement in Section~\ref{sec-end}. The idea will be to use integration by parts in $v$ to trade derivatives in $v$ against derivatives in $x$, allowing to obtain estimates depending on~\eqref{defM2} (compared to~\eqref{defM} for Theorem~\ref{thm}).

 First note using the smoothing estimate~\eqref{Fj-imp} that we improve~\eqref{Fjnew} to
 \begin{equation}
\label{Fjnewnew}
\sum_{j=1}^\ell \| F^j \|_{L^2(0,T; H^{2(m+k)}_x)} \leq \Lambda(T,M).
\end{equation}
We can use this improved estimate with Remark~\ref{rem-coef} to deduce that the coefficients of $\mathfrak{A}$ (as appearing in~\eqref{eq-crucru}) satisfy the improved form of~\eqref{estim-A}
\begin{equation}
\label{estim-A'}
\| \mathfrak{A}\|_{L^2(0,T; W^{p, \infty}_{x,v})} \lesssim \Lambda(T,M), \qquad \forall p < 2m-d/2-1.
\end{equation}
Therefore we deduce the improved form of~\eqref{estim-AA}:
\begin{equation}
\label{estim-AA'}
\| \mathcal{A}(\cdot,t,\cdot) \|_{L^\infty(0,T; W^{p, \infty}_{x,v})} \lesssim \Lambda(T,M),  \qquad \forall p < 2m-d/2-1.
\end{equation}

 The treatment of $I_0$ then leads to the study of terms of the general form
 $$
J=\int_{\R^d}  (\pa^{\a}_x \pa^\b_v \F)(0,\mathrm{X}(0,t,x,v),v) m(t,x,v)\, dv,
 $$
where, for $j=m-k,\cdots,m+k$, $|\a|+|\b| =  m+k+j$, $|\a|\geq 2j$, and 
$$\|m\|_{L^\infty(0,T; \Hc^{N}_{-r'- r_0})} \leq \Lambda(T,M),$$
for all $N < 2m - d/2 -1$ and all $r'>d/2$.
If $|\b|=0$, there is nothing special to do, as only derivatives in $x$ are involved, so let us assume that $|\b|\geq 1$.
We write $\pa^\b_v = \pa^{\beta'}_v \pa_v$. We have
\begin{multline*}
J = \int_{\R^d} \pa_v\Big[  (\pa^\a_x \pa^{\b'}_v \F)(0,\mathrm{X}(0,t,x,v),v) \Big]m(t,x,v)\, dv \\
- \int_{\R^d}  ( \pa^\a_x \pa^{\b'}_v)( \pa_v \mathrm{X}(0,t,x,v) \cdot \na_x  \F)(0,\mathrm{X}(0,t,x,v),v) m(t,x,v)\, dv,
\end{multline*}
  and thus by integration by parts in $v$, we get
\begin{multline*}
J =- \int_{\R^d} \Big[ (\pa^\a_x \pa^{\b'}_v \F)(0,\mathrm{X}(0,t,x,v),v) \Big] \pa_v m(t,x,v)\, dv \\
- \int_{\R^d}  ( \pa^\a_x \pa^{\b'}_v)( \pa_v \mathrm{X}(0,t,x,v) \cdot \na_x  \F)(0,\mathrm{X}(0,t,x,v),v) m(t,x,v)\, dv. 
\end{multline*}  
We therefore observe that this procedure allows to trade derivatives in $v$ against derivatives in $x$.

 Assume now that one can write, for  some $l \in \{ 1, \cdots,  |\b|\} $,
 $$
 J= \sum_{|\b'|\leq l} \sum_{|\a'| \leq |\a|+ |\b|-l }  \int_{\R^d} \Big[ ( \pa^{\a'}_x \pa^{\b'}_v \F)(0,\mathrm{X}(0,t,x,v),v) \Big]  m_{\a',\b'}(t,x,v)\, dv + R_l
 $$
 where 
 \begin{equation}
 \label{hyp-m}
\| m_{\a',\b'} \|_{L^\infty(0,T; \Hc^{N_l}_{-r'- r_0})} \leq \Lambda(T,M),
\end{equation}
for all $N_l < 2m - d/2 -1-|\b|+l$ and all $r'>d/2$,
 and $R_l$ is a remainder satisfying 
  $$\| R_l \|_{L^2(0,T;L^2_{x})} \leq \Lambda(T,M).$$
 Let us show that this property holds as well for at rank $l-1$.
   Following the same integration by parts argument as above, we may write 
   $$
    J-R_l=  J_1 +J_2+J_3 
    $$
    where
    $$
    \begin{aligned}
      J_1 =&\sum_{|\a'| \leq |\a|+ |\b|-l }  \int_{\R^d} \Big[ (\pa^{\a'}_x \F)(0,\mathrm{X}(0,t,x,v),v) \Big]  m_{\a',0}(t,x,v)\, dv, \\
   J_2 =&- \sum_{|\b'|\leq l}  \sum_{\b' = (\b'', j)}  \sum_{|\a'| \leq |\a|+ |\b|-l } \\
   &\qquad\qquad\qquad \int_{\R^d} \Big[ (\pa^{\a'}_x \pa^{\b''}_v \F)(0,\mathrm{X}(0,t,x,v),v) \Big]  \pa_{v_j} m_{\a',\b'}(t,x,v)\, dv,  \\
  J_3 =  &- \sum_{|\b'|\leq l}  \sum_{\b' = (\b'', j)}    \sum_{|\a'| \leq |\a|+ |\b|-l } \\
  &\qquad\quad \int_{\R^d}  ( \pa^{\a'}_x \pa^{\b''}_v)( \pa_{v_j} \mathrm{X}(0,t,x,v) \cdot \na_x  \F)(0,\mathrm{X}(0,t,x,v),v) m_{\a',\b'}(t,x,v)\, dv.
 \end{aligned}
 $$
   The terms $J_1$ and $J_2$ have the good form already. For $J_3$, by using Leibniz rule, we observe that  we need to study terms of the form
$$
\overline{J} =  \int_{\R^d}  \pa^\gamma_{x,v} \mathrm{X}(0,t,x,v) \pa^{\eta_1}_x \pa^{\eta_2}_v \F(0,\mathrm{X}(0,t,x,v),v) m_{\a',\b'}(t,x,v)\, dv,
$$
with $|\eta_2| \leq |\beta''| =l-1$, $1\leq |\eta_1| \leq |\a'| +1$, and  $|\gamma| =|\a'| + |\b''| -  |\eta_1|- |\eta_2|+ 2$.  

Assume first that $|\eta_1|+|\eta_2| \leq 2m-1$. If $|\eta_1|+|\eta_2| < 2m-1-d$, then by Sobolev embedding we can bound 
$$
\| (1+ |v|^2)^{r/2} (\pa^{\eta_1}_x \pa^{\eta_2}_v \F)(0,\mathrm{X}(0,t,x,v),v) \|_{L^\infty_{x,v}} \leq \|f_0\|_{\Hc^{2m-1}_r} \leq \Lambda(M).
$$
Since $0<|\gamma| \leq 2(m+k)$, we use~\eqref{Fjnewnew} and Lemma~\ref{lem-phi} to get
$$\| \pa^\gamma \mathrm{X} \|_{L^\infty(0,T; L^\infty_v L^2_{x})} \leq \Lambda(T,M).$$
(This is where the elliptic estimate~\eqref{Fj-imp} is crucially used.) 
Furthermore, since $2m-d/2 - 1 -2p>d$, we can bound
$$
\| m_{\a',\b'} \|_{L^\infty(0,T; \mathcal{W}^{0,\infty}_{-r'- r_0})} \leq \Lambda(T,M),
$$
for $r'>d/2$ such that $r> r'  + r_0+d$. 
Therefore such terms satisfy a bound
$$
\| \overline{J}\|_{L^2(0,T; L^2_{x,v})} \leq \Lambda(T,M),
$$
and thus can be put into the  remainder $R_{l-1}$.
 If $|\eta_1|+|\eta_2| \geq 2m-1-d$, 
then  $|\gamma| \leq 2k + d+1$. Since $2m-d-1 >d/2$, we can use this time  $\|\pa^\gamma \mathrm{X}  \|_{L^\infty(0,T; L^\infty_{x,v} )} \leq \Lambda(T,M)$ and again, arguing as in the treatment of $I_0$ in  the proof of Theorem~\ref{thm}, such terms are remainders.

Otherwise $|\eta_1|+|\eta_2| \geq  2m$. Then we have $|\gamma| \leq 2k $
and thus $2(m+k)-|\gamma| \geq 2m$.
We set in this case $m_{\eta_1, \eta_2} :=\pa^\gamma_{x,v}  \mathrm{X} \, m_{\a',\b'}$.
 In order to show that $\pa^\gamma_{x,v}  \mathrm{X} \, m_{\a',\b'}$ has the required regularity, we are  led to study terms of the form
$$
\tilde{J}  = \|\pa^a_{x,v} \pa^\gamma_{x,v}  \mathrm{X} \,  \pa^b_{x,v} m_{\a',\b'} \|_{L^\infty(0,T;\Hc^0_{-r'-r_0})}, \qquad |a| + |b| =N_{l-1},
$$
for all $N_l < 2m - d/2 -1-|\b|+l-1$ and all $r'>d/2$.
Assume first that $|a|<2m-d/2$, then we have $|a| + |\gamma| < 2(m+k)-d/2$ and we use estimate~\eqref{LinftyXtilde} in Lemma~\ref{lem-phi} to get $\|\pa^a_{x,v} \pa^\gamma_{x,v}  \mathrm{X}\|_{L^\infty(0,T; L^\infty_{x,v})} \leq \Lambda(T,M)$, and apply~\eqref{hyp-m}
to bound 
$$ \|  \pa^b_{x,v} m_{\a',\b'} \|_{L^\infty(0,T;\Hc^0_{-r'-r_0})}  \leq \Lambda(T,M).$$
Otherwise, $|a|\geq 2m-d/2$.
Since we have $ 2(m+k)-|\gamma| \geq N_{l-1}$ for all $N_{l-1}<  2m -2  -|\b| + l $,
and we can use estimate~\eqref{LinftyXtilde} in Lemma~\ref{lem-phi} to get
$$\sum_{|a|\leq N_{l-1}} \|\pa^a_{x,v} \pa^\gamma_{x,v}  \mathrm{X} \|_{L^\infty(0,T; L^\infty_v L^2_{x})} \leq \Lambda(T,M).$$
Since $|b| = N_{l-1} - |a| \leq N_{l-1} -2m +d/2$, we have 
$N_l-|b| \geq 2m + 1 -d/2 > d$.
As a result, by~\eqref{hyp-m} and the Sobolev embedding we get that
$$
 \|  \pa^b_{x,v} m_{\a',\b'} \|_{L^\infty(0,T; \mathcal{W}^{0,\infty}_{-r'-r_0})}  \leq \Lambda(T,M).
$$
In all cases, we have obtained
$$
\tilde{J}  \leq \Lambda(T,M).
$$
Therefore  the corresponding terms  of $J_3$ can be written under the form 
$$
 \int_{\R^d}  (\pa^{\eta_1}_x \pa^{\eta_2}_v \F)(0,\mathrm{X}(0,t,x,v),v) m_{\eta_1, \eta_2}(t,x,v)\, dv,
$$
with $\| m_{\eta_1, \eta_2} \|_{L^\infty(0,T; \Hc^{N_{l-1}}_{-r'-r_0})} \leq \Lambda(T,M)$ for all 
$N_{l-1}<  2m -1  -|\b| + (l-1) $ and $r'>d/2$.


We conclude by induction that we can write at rank $l=0$
 $$
 J= \sum_{|\a'| \leq m+k+j}  \int_{\R^d} \Big[ ( \pa^{\a'}_x  \F)(0,\mathrm{X}(0,t,x,v),v) \Big]  m_{\a',0}(t,x,v)\, dv + R
 $$
 with $\| m_{\a',\b'} \|_{L^\infty(0,T; \Hc^{N}_{-r'- r_0})} \leq \Lambda(T,M)$ 
for all $N< 2m-1 -|\b|$ and $r'>d/2$
 and $\| R\|_{L^2(0,T; L^2_{x})} \leq \Lambda(T,M)$ is a remainder. 
 
 We then note that $2m-2 -2k > d$, so that 
 $$\| m_{\a',\b'} \|_{L^\infty(0,T; \mathcal{W}^{0,\infty}_{-r'- r_0})} \leq \Lambda(T,M).$$ 
Arguing as in the previous treatment of $I_0$ in the proof of Theorem~\ref{thm},  we finally conclude that
 \begin{equation}
\| I_0\|_{L^2(0,T; L^2_{x})} \leq  \Lambda(T,M) \sum_{j=m-k}^{m+k} \sum_{|\a|  =m+k-j } \| \pa_x^\alpha f_0\|_{\Hc^{0}_{r}}.
\end{equation}
This allows to conclude the proof.
 
 \end{proof}
 
As already noted in the proof of Theorem~\ref{thm2}, we actually do not need to use Proposition~\ref{proposition-K} to treat the term $I_2$ in view of Theorem~\ref{thm2}: we can indeed rely on the smoothing estimate~\eqref{Fj-imp} on the force instead.  Furthermore,  one can obtain $L^\infty_t$ estimates
 instead of the $L^2_t$  theory that we have developped. 
 This observation implies the following fact:
 replacing~\eqref{Fj-imp} by the slightly weaker estimate (in the sense that it is implied by~\eqref{Fj-imp}):
        \begin{equation}
\label{Fj-imp2}
\begin{aligned}
&\| F^j \|_{L^2(0,t; H^n_x)} \\
& \leq \Gamma^{(j)}_{n} \left(t,  \| m_{\psi_1} \|_{L^\infty(0,t;H^{n-1}_x)}, \cdots, \| m_{\psi_r} \|_{L^\infty(0,t;H^{n-1}_x)}, \sum_{j=1}^{\ell} \|F^{j} (0) \|_{H^n_x} \right).
 \end{aligned}
\end{equation}
together with an associated stability estimate replacing~\eqref{stab} with $L^\infty_t$ norms instead of $L^2_t$ for the moments on the right-hand side,
 then Theorem~\ref{thm2} still holds.
It suffices to estimate all terms (that is to say the moments, $I_0$, $I_1$, $I_2, \cdots$) 
 in $L^\infty(0,T; L^2_x)$ instead of  $L^2(0,T; L^2_x)$ as previously.
This remark is useful in particular to treat the so-called Vlasov-Darwin model from plasma physics, that we introduce in the following paragraph.

\subsection{Vlasov-Darwin} The Vlasov-Darwin system is another model that allows to describe the dynamics of charged particles in a plasma, which stands on stage between Vlasov-Poisson and relativistic Vlasov Maxwell. Like Vlasov-Poisson, it can be derived from Vlasov-Maxwell in the non-relativistic regime, that is to say in the limit $c \to + \infty$.  
The difference is that Vlasov-Darwin happens to be a higher oder approximation than Vlasov-Poisson, see \cite{BK}; in particular it retains self-induced magnetic effects that have disappeared completely in the Vlasov-Poisson dynamics. It reads
\begin{equation}\label{VD} 
\left \{ \begin{aligned}
&\partial_t f + \hat{v} \cdot \nabla_x f + \left(E + \frac{1}{c} \hat{v} \times B\right)\cdot \nabla_v f =0,
\\
&E= -\na_x \phi -\frac{1}{c} \pa_t A, \quad B = \na_x \times A,
\\
&-\na_x \phi = \int_{\R^3} f \, dv  - \int_{\T^3 \times \R^3} f \, dv  dx, \\
&-\Delta_x A= \frac{1}{c} \mathbb{P}\int_{\R^3}\hat{v}  f \, dv  , \quad \na_x \cdot A=0,
\end{aligned}
\right.
\end{equation}  
where $c>0$ is the speed of light and $\mathbb{P}$ denotes the Leray projector. 
The Cauchy problem for the  Vlasov-Darwin system~\eqref{VM} was studied (among many other references):
 \begin{itemize}
 \item for what concerns (global) weak solutions, in \cite{Pal06},
\item  for what concerns strong solutions in \cite{Pal06,See,SAAI}.
\end{itemize}

To embed this system into the abstract framework, we need to make the additional assumption that all initial conditions $f_0$ that are considered are a.e. non-negative. By a standard property of the Vlasov equation, any associated  solution $f(t)$ is also a.e. non-negative.

\noindent $\bullet$ \emph{Assumptions on the advection field.} In this model $a(v) =\hat{v}$, which is already treated for the relativistic Vlasov-Maxwell case.

\noindent $\bullet$ \emph{Assumptions on the force field.} We have the decomposition~\eqref{dec-1}--\eqref{dec-2} as well.
Let us set
$$
E= E_L + E_T, \qquad E_L= \na_x \phi, \qquad E_T= -\frac{1}{c} \pa_t A.
$$
and introduce 
$$\psi_1=1, \psi_2 =\hat{v}, \psi_3 = \frac{\hat{v}\otimes \hat{v}}{\sqrt{1+|v|^2/c^2}}, \psi_3 = Id - m_{\psi_3},$$ 
(so that $r_0=0$) and
$$
m_{\psi_i}= \int_{\R^d} \psi_i  f \, dv,
$$
where $m_{\psi_3}$ and $m_{\psi_4}$ are symmetric matrices.
Since $E_L$ and $E_T$ derive from potentials solving a Poisson equation, we have
$$
\forall t \geq 0, \quad \| (E_L, B) (t) \|_{H^n_x} \lesssim \sum_{i=1}^2 \| m_{\psi_i}(t) \|_{H^{n-1}_x}
$$
and thus $$\| (E_L, B)  \|_{L^\infty(0,t;  H^n_x)} \lesssim \sum_{i=1}^2 \| m_{\psi_i}(t) \|_{L^\infty(0,t; H^{n-1}_x)}.$$
For what concerns $E_T$, this is a little more subtle; this is where we need that $f(t) \geq 0$ a.e. As in 
\cite[Lemma 2.10]{Pal06}, we obtain that $E_T$ satisfies the inhomogeneous elliptic equation
\begin{equation}
\label{eq-ET}
- \Delta E_T + \frac{1}{c}  m_{\psi_4} E_T = - \frac{1}{c} \left( m_{\psi_4} E_L - m_{\psi_2} \times B -  \na_x : m_{\psi_3}  \right).
\end{equation}
We fix the time $t\geq 0$ which is a parameter here (we take the $L^\infty_t$ norm in the end). Let $n >  d $. 
By  \cite[Lemma 2.10]{Pal06},  which relies on the fact  that $m_{\psi_4}$ is actually a \emph{semi-definite} symmetric matrix, it follows that~\eqref{eq-ET} has a unique solution $E_T$ in $H^1_x$, with the bound
$$
\begin{aligned}
\|E_T \|_{H^1_x}  &\lesssim \| m_{\psi_4} E_L \|_{H^{-1}_x} + \| m_{\psi_2} \times B  \|_{H^{-1}_x} + \|\na : m_{\psi_3} \|_{H^{-1}_x} \\
&\lesssim \| m_{\psi_4} E_L \|_{L^2_x} + \| m_{\psi_2} \times B  \|_{L^2_x} + \|m_{\psi_3} \|_{L^2_x} \\
&\lesssim \| m_{\psi_4}\|_{H^n_x}  \| E_L \|_{H^n_x} + \| m_{\psi_2} \|_{H^n_x}  \| B  \|_{H^n_x}  + \|m_{\psi_3}\|_{H^n_x}  \\ 
&\lesssim \left( 1+ \sum_{i=1}^2 \| m_{\psi_i}(t) \|_{H^{n}_x}\right)\left(
 \| m_{\psi_4}\|_{H^n_x} +  \| m_{\psi_2} \|_{H^n_x}   + \|m_{\psi_3}\|_{H^n_x}\right). 
\end{aligned} 
$$
Then assume by induction that we have a bound of the form
\begin{equation}
\label{eq-vd-ind}
\forall k=1,\cdots, N, \qquad  \|E_T \|_{H^k_x} \lesssim \Gamma_{k} \left(\| m_{\psi_1} \|_{H^n_x}, \cdots, \| m_{\psi_4} \|_{H^n_x} \right),
\end{equation}
for $N\leq n$, where $\Gamma_k$ is a polynomial function.
Assume first that $N < n-d/2$.
Let $|\a|= N$. We note that $\pa^\a_x E_T$ satisfies
$$
- \Delta  \pa^\a_xE_T + \frac{1}{c}  m_{\psi_4} \pa^\a_xE_T = - \frac{1}{c} \pa^\a_x \left( m_{\psi_4} E_L - m_{\psi_2} \times B -  \na_x : m_{\psi_3}  \right)- [\pa^\a_x, m_{\psi_4}] E_T.
$$
We have by standard tame Sobolev estimates
\begin{equation}
\label{eq-vd-1}
\begin{aligned}
&\| \pa^\a_x \left( m_{\psi_4} E_L - m_{\psi_2} \times B -  \na_x : m_{\psi_3}  \right) \|_{H^{-1}_x} \lesssim \\
&\qquad\qquad\qquad \left( 1+ \sum_{i=1}^2 \| m_{\psi_i}(t) \|_{H^{n}_x}\right)\left(
 \| m_{\psi_4}\|_{H^n_x} +  \| m_{\psi_2} \|_{H^n_x}   + \|m_{\psi_3}\|_{H^n_x}\right).
 \end{aligned}
\end{equation}
Since  $N<n -d/2$, we  can use the Sobolev embedding to obtain
$$ 
\begin{aligned}
\| [\pa^\a_x, m_{\psi_4}] E_T\|_{H^{-1}_x} &\lesssim \| m_{\psi_4} \|_{W^{N, \infty}_x} \| E_T\|_{H^{N}_x} \\
& \lesssim    \| m_{\psi_4} \|_{H^n_x} \Gamma_{k} \left(\| m_{\psi_1} \|_{H^n_x}, \cdots, \| m_{\psi_4} \|_{H^n_x} \right).
 \end{aligned}
$$
We apply again the $H^1_x$ estimate of \cite[Lemma 2.10]{Pal06} to obtain a bound of the form
$$
\|E_T \|_{H^{N+1}_x} \lesssim \Gamma_{N+1} \left(\| m_{\psi_1} \|_{ H^n_x}, \cdots, \| m_{\psi_4} \|_{H^n_x} \right).
$$
We deduce by induction that for all $N<n-d/2$,
$$
\|E_T \|_{H^{N+1}_x} \lesssim \Gamma_{N+1} \left(\| m_{\psi_1} \|_{H^n_x}, \cdots, \| m_{\psi_4} \|_{ H^n_x} \right).
$$
In particular, since $n>d$, we deduce in particular
\begin{equation}
\label{eq-vd-inf}
\|E_T \|_{L^\infty_x} \lesssim \Gamma \left(\| m_{\psi_1} \|_{H^n_x}, \cdots, \| m_{\psi_4} \|_{ H^n_x} \right).
\end{equation}
Now assume we have~\eqref{eq-vd-ind} for some $N \leq n$. We have the tame Sobolev estimate
$$ 
\begin{aligned}
\| [\pa^\a_x, m_{\psi_4}] E_T\|_{H^{-1}_x} &\lesssim \| m_{\psi_4} \|_{H^n} (\| E_T\|_{H^{N}_x} + \|E_T\|_{L^{\infty}_x}) \\
& \lesssim    \| m_{\psi_4} \|_{H^n_x} \Gamma_{N} \left(\| m_{\psi_1} \|_{H^n_x}, \cdots, \| m_{\psi_4} \|_{H^n_x} \right),
 \end{aligned}
$$
by  \eqref{eq-vd-ind} at rank $N$ and  \eqref{eq-vd-inf}.
Thus using the $H^1_x$ estimate of \cite[Lemma 2.10]{Pal06}, we obtain~\eqref{eq-vd-ind} at rank $N+1$.
By induction, we conclude that
$$
\|E_T \|_{L^\infty(0,T; H^{n+1}_x)} \lesssim \Gamma_{n+1} \left(\| m_{\psi_1} \|_{L^\infty(0,T; H^n_x)}, \cdots, \| m_{\psi_4} \|_{L^\infty(0,T; H^n_x)} \right),
$$
which is an estimate of the requested form~\eqref{Fj-imp2}.  A stability estimate of the same form also holds because of similar considerations.

 \section{On the regularity assumptions of Theorem~\ref{thm}}
 \label{sec-rk}
 
 The goal of this short last section is to discuss  the type of regularity assumptions which could be maybe conceivable for proving propagation of higher reguliarity.
 
 \bigskip
 
\noindent{\bf Example 1.}
Consider the free transport equation
\begin{equation}
\label{eq-ft}
\pa_t f + v  \pa_x f = 0,
\end{equation}
set in $\R \times \R$ to simplify the discussion. Let $\varphi(v)$ be a $C^\infty$  function, with compact support in $[-1/2,1/2]$ and such that $\int_{\R} \varphi \, dv =0$. Let $g$ be the piecewise continuous function defined by $g(x)= 1$ for $x \in [-1,1]$ and $0$ elsewhere.
Observe that in the sense of distributions, we have $g'(x) = \delta_{x=-1} - \delta_{x=-1}$, where $\delta$ stands for the Dirac measure. 
We consider the initial condition 
$$f|_{t=0} = g(x) \varphi(v) \in L^2_{x,v},$$ 
and the solution to~\eqref{eq-ft} reads
$$
f(t,x,v)  = g(x-tv) \varphi(v).
$$
It follows by explicit computations that $\rho(t,x) := \int_{\R} f \, dv$ satisfies
$$
\begin{aligned}
\pa_x \rho (t,x) &= \varphi\left( \frac{x+1}{t} \right) -  \varphi\left( \frac{x-1}{t} \right), \\
\pa_x^k \rho(t,x) &= \frac{1}{t^{k-1}} \left(  \varphi^{(k-1)}\left( \frac{x+1}{t} \right)  -   \varphi^{(k-1)}\left( \frac{x-1}{t} \right)\right), \qquad \forall k \in \N^*.
\end{aligned}
$$
We have for $t<4$,
$$
\| \pa_x^k \rho(t)  \|_{L^2_{x}}^2 =  \frac{1}{t^{2(k-1)}}  \left( \left\| \varphi^{(k-1)}\left( \frac{x+1}{t} \right)\right\|_{L^2_{x}}^2 +  \left\| \varphi^{(k-1)}\left( \frac{x-1}{t} \right)\right\|_{L^2_{x}}^2  \right),
$$
since $\varphi$ is compactly supported in $[-1/2,1/2]$, and thus
$$
\| \pa_x^k \rho(t) \|_{L^2_{x}}^2  =  \frac{2}{t^{2(k-1)-1}}  \| \varphi^{(k-1)} \|_{L^2_x}^2.
$$
We deduce that for any $T>0$, $\rho \notin L^2(0,T; H^2_x)$. However $\rho(0,x) =0 \in H^k_x$ for all $k \in \N$. 

This example shows that regularity of moments at initial time may not be propagated, and a  more precise information such as~\eqref{eq-thm} is somehow required to obtain higher regularity for moments.

\bigskip

\noindent{\bf Example 2.} Consider the equation 
\begin{equation}
\label{eq-ex2}
\pa_t f + v \pa_x f + F(t,x) \pa_v f = 0,
\end{equation}
on $\T \times \R$, with 
$$F (t,x) = \int_{\R} \psi(v) f(t,x,v) \, dv,$$ 
where $\psi \in C^\infty_c(\R^d)$ with compact support in $[-1/2,1/2]$.  It is clear that~\eqref{eq-ex2} enters the abstract framework of this work.

We consider the initial condition 
$$f|_{t=0} =f_0^{(1)} + f_0^{(2)},$$ 
where $f_0^{(2)} $ is a smooth non-negative  function, with support in $\T \times [-1/2,1/2]$ and $f_0^{(1)} $ is a smooth non-negative  function, with support in $\T\times [1,2]$.

Consider $f^{(1)}$ the solution of~\eqref{eq-ex2} associated to the initial condition $f_0^{(1)} $, and assume that it is defined on an interval $[0,T]$, for $T>0$ small enough.
Now define $f^{(2)}$ as the solution on $[0,T]$  of the \emph{linear} kinetic transport equation
$$
\pa_t f + v \pa_x f + \left(\int_{\R} \psi(v) f^{(1)} \, dv\right) \pa_v f = 0,
$$
with initial condition $f_0^{(2)}$.

Because of the form of the force $F$ (notably because $\psi$ is localised in $[-1/2,1/2]$), we observe that up to reducing $T>0$, the solution $f$ on $[0,T]$ of~\eqref{eq-ex2} can be written as 
$$
f = f^{(1)} + f^{(2)},
$$ 
since $T>0$ can be chosen small enough so that the support in velocity of $f^{(2)}(t)$ is disjoint from that of $\psi$, and thus 
$$ \int_{\R} \psi(v) f^{(2)}(t) \, dv =0.$$
Now let $k \in \N$ and  assume that there is $(x_0,v_0) \in \T \times (1,2)$ such that $f|_{t=0}(x_0,v_0) \neq 0$ and is locally $H^k$ around this point. Because of the assumptions on the supports, this is equivalent to ask that  $f_0^{(2)}(x_0,v_0) \neq 0$ and is locally $H^k$ around this point. However we can choose  (independently of $f_0^{(2)}$)  $f_0^{(1)}$ so that $\int_{\R} \psi(v) f^{(1)} \, dv$ is not $H^k$, in such a way that 
$f^{(2)}(t)$ (and thus $f(t)$) is not locally $H^k$ around points of the form $(X(0,t,x_0,v_0),V(0,t,x_0,v_0))$, where $(X,V)$ denote the characteristics associated to $F$, as defined in~\eqref{charac}.

This example shows that local regularity may not be propagated (along characteristics), contrary to what happens for the class of PDEs considered in \cite{Bony}. This is due to the ``non-locality'' in velocity. Therefore a  global regularity assumption is required
in order to obtain propagation of higher regularity.  

This example can (also) be slightly modified, 
in order to prove that a local version of~\eqref{eq-thm} cannot either be propagated into higher local regularity of moments,
see the next (and last) example.

\bigskip

\noindent{\bf Example 3.} Consider the equation 
\begin{equation}
\label{eq-ex3}
\pa_t f + v \pa_x f + F(t,x+1/4) \pa_v f = 0,
\end{equation}
on $\T \times \R$ (here we identify $\T$ with $[0,1)$ with periodic boundary conditions). 
Let us consider as in the previous example
$$F (t,x) = \int_{\R} \psi(v) f(t,x,v) \, dv.$$ 
We consider the initial condition 
$$f|_{t=0} =f_0^{(1)} + f_0^{(2)},$$ 
where $f_0^{(1)} $ is a non-negative  function, with compact support in $[0,1/8] \times \R$ and $f_0^{(2)} $ is a non-negative  function, with compact support in $[1/4,3/8]  \times  \R$.

Observe that because of the shift in the argument of the force, by looking at the supports in $x$, the solution $f^{(2)}$ associated to the initial condition $f_0^{(2)} $ is equal to $f_0^{(2)}(t,x-tv,v) $ on $[0,T]$, for $T>0$ small enough. Moreover, we have
$$
 \left(\int_{\R} \psi(v) f^{(2)}(t,x+1/4,v) \, dv\right) \pa_v f^{(2)} =0.
$$
Now define $f^{(1)}$ as the solution on $[0,T]$  of the \emph{linear} kinetic transport equation
$$
\pa_t f + v \pa_x f + \left(\int_{\R} \psi(v) f^{(2)}_0(x+1/4-tv,v) \, dv\right) \pa_v f = 0,
$$
with initial condition $f_0^{(2)}$.

We observe that up to reducing $T>0$, the solution $f$ on $[0,T]$ of~\eqref{eq-ex2} can be written as 
$$
f = f^{(1)} + f^{(2)}.
$$ 
Indeed, by looking at the supports in $x$, we can impose $T>0$ small enough so that
$$
\begin{aligned}
 &\left(\int_{\R} \psi(v) f^{(1)}(t,x+1/4,v) \, dv\right) \pa_v f^{(2)}= 0, \\
 &\left(\int_{\R} \psi(v) f^{(1)}(t,x+1/4,v) \, dv\right) \pa_v f^{(1)} = 0.
 \end{aligned}
$$
Now let $k \in \N$ and  assume that there is $x_0\in (0,1/8)$ such that $\int_{\R} f|_{t=0}(x_0,v) \,dv \neq 0$ and $f|_{t=0}$ is locally $H^k_x$ around this point. This is equivalent to ask  for the fact that  $\int_{\R} f_0^{(1)}(x_0,v) \,dv \neq 0$ and $f_0^{(1)}$ is locally $H^k_x$ around this point. This corresponds to a local analogue of~\eqref{eq-thm}. However we can choose (independently of $f_0^{(1)}$) $f_0^{(2)}$ so that $\int_{\R} \psi(v) f_0^{(2)}(x-tv,v) \, dv$ is not locally $H^k$, in such a way that 
the moments in velocity of $f^{(1)}(t)$ (and thus of $f(t)$) are not locally $H^k_x$ around points of the form $X(0,t,x_0,v_0)$, for some $v_0 \in \R$.

\bigskip

\noindent {\bf Acknowledgements.} The author  would like to thank Fr\'ed\'eric Rousset, as this work wouldn't have been possible without \cite{HKR}, and J\'er\'emie Szeftel for a stimulating discussion about the second order operators introduced in \cite{HKR} and used in this work as well.

\bibliographystyle{plain}
\bibliography{space-vlasov}

\end{document}